\documentclass[reqno,11pt,letterpaper]{amsart}
\usepackage{mathrsfs}
\usepackage{amssymb}
\usepackage[usenames,dvipsnames]{xcolor}
\usepackage{amsthm}
\usepackage{amsmath}
\usepackage{amsfonts}
\usepackage{hyperref}
\usepackage{enumerate}
\usepackage{txfonts}
\usepackage{bm}
\usepackage{graphicx}
\numberwithin{equation}{section}
\usepackage{tikz}
\usetikzlibrary{3d,perspective,arrows.meta,calc}
\usepackage{float}
\usepackage{cite}

\newtheorem{theorem}{Theorem}[section]
\newtheorem{lemma}[theorem]{Lemma}
\newtheorem{corollary}[theorem]{Corollary}
\newtheorem{proposition}[theorem]{Proposition}

\theoremstyle{definition}
\newtheorem{definition}[theorem]{Definition}
\newtheorem{assumption}[theorem]{Assumption}
\newtheorem{example}[theorem]{Example}

\theoremstyle{remark}
\newtheorem{remark}[theorem]{Remark}

\allowdisplaybreaks 

\begin{document}
	
	\title[transonic shock]{Three-dimensional steady transonic shocks for the relativistic Euler equations}
\author{Shangkun WENG} 
\address{School of Mathematics and Statistics, Wuhan University, Wuhan, 430072, Hubei Province, People's Republic of China.}
\email{skweng@whu.edu.cn}

\author{Yan ZHOU}
\address{School of Mathematical Sciences, Fudan University, Shanghai, 200433, People's Republic of China.} 
\email{yanzhou@fudan.edu.cn.}

\keywords{Transonic shock, elliptic-hyperbolic mixed, spherical projection coordinates, deformation-curl decomposition, relativistic fluid.}
\subjclass[2020]{35L67, 35M12, 76L05, 76H05, 76N10, 76N15}
\date{}
\maketitle

\def\be{\begin{eqnarray}}
	\def\ee{\end{eqnarray}}
\def\ba{\begin{aligned}}
	\def\ea{\end{aligned}}
\def\bay{\begin{array}}
	\def\eay{\end{array}}
\def\bca{\begin{cases}}
	\def\eca{\end{cases}}
\def\p{\partial}
\def\no{\nonumber}
\def\e{\epsilon}
\def\de{\delta}
\def\De{\Delta}
\def\om{\omega}
\def\Om{\Omega}
\def\f{\frac}
\def\th{\theta}
\def\la{\lambda}
\def\lab{\label}
\def\b{\bigg}
\def\d{\mathrm{d}}
\def\var{\varphi}
\def\na{\nabla}
\def\ka{\kappa}
\def\al{\alpha}
\def\La{\Lambda}
\def\ga{\gamma}
\def\Ga{\Gamma}
\def\ti{\tilde}
\def\wti{\widetilde}
\def\wh{\widehat}
\def\ol{\overline}
\def\ul{\underline}
\def\Th{\Theta}
\def\si{\sigma}
\def\Si{\Sigma}
\def\oo{\infty}
\def\q{\quad}
\def\z{\zeta}
\def\co{\coloneqq}
\def\eqq{\eqqcolon}
\def\di{\displaystyle}
\def\bt{\begin{theorem}}
	\def\et{\end{theorem}}
\def\bc{\begin{corollary}}
	\def\ec{\end{corollary}}
\def\bl{\begin{lemma}}
	\def\el{\end{lemma}}
\def\bp{\begin{proposition}}
	\def\ep{\end{proposition}}
\def\br{\begin{remark}}
	\def\er{\end{remark}}
\def\bd{\begin{definition}}
	\def\ed{\end{definition}}
\def\bpf{\begin{proof}}
	\def\epf{\end{proof}}
\def\bex{\begin{example}}
	\def\eex{\end{example}}
\def\bq{\begin{question}}
	\def\eq{\end{question}}
\def\bas{\begin{assumption}}
	\def\eas{\end{assumption}}
\def\ber{\begin{exercise}}
	\def\eer{\end{exercise}}
\def\mb{\mathbb}
\def\mbR{\mb{R}}
\def\mbZ{\mb{Z}}
\def\mc{\mathcal}
\def\mcS{\mc{S}}
\def\ms{\mathscr}
\def\lan{\langle}
\def\ran{\rangle}
\def\lb{\llbracket}
\def\rb{\rrbracket}
\begin{abstract}
	We investigates the transonic shock problem for steady relativistic Euler equations in hemispherical shells. We show the existence and uniqueness of spherically symmetric transonic shock solutions via the shooting method and the monotonicity between the exit pressure and shock position. Furthermore, without any restrictions on background transonic shocks, we prove the existence and stability of transonic shocks under three dimensional perturbations of the exit pressure. The key techniques involve the decoupling the hyperbolic and elliptic components in steady relativistic Euler equations using the deformation tensor and vorticity. To address the coordinate singularities, the ``spherical projection coordinates" combining the spherical coordinates and the stereographic projection is employed. Subsequently, the Rankine-Hugoniot conditions are appropriately reformulated to determine the shock front and derive the boundary conditions on the shock front for a first-order nonlocal deformation-curl system.
\end{abstract}


\section{Introduction and main results}

For the classical convergent-divergent De Laval nozzle, Courant and Friedrichs \cite{CF1948} described that when the incoming flow accelerates to supersonic speed after passing through the throat, a shock front appears in the diverging section to match the appropriately prescribed exit pressure, and the flow is compressed and decelerated to subsonic speed across the shock. In this paper, we study the spherical transonic shock problem for steady relativistic Euler equations in a hemispherical shell $\Omega=\{(x_1,x_2,x_3)\in \mathbb{R}^3 \mid x_1>0,\  r_1<r=\sqrt{x_1^2+x_2^2+x_3^2}<r_2\}$ (see Figure~\ref{fig:shell}), whose governing equations are as follows
\begin{equation}\label{rela-euler-1}
	\begin{cases}
		\sum_{i=1}^{3} \partial_{x_i}\!\left(\dfrac{n U_i c}{\sqrt{c^2 - U^2}}\right) = 0, \\
		\sum_{i=1}^{3} \partial_{x_i}\!\left(\dfrac{(p + \rho c^2) U_i U_j}{c^2 - U^2} + p\,\delta_{ij}\right) = 0, \quad j = 1,2,3, \\
		\sum_{i=1}^{3} \partial_{x_i}\!\left(\dfrac{(p + \rho c^2) U_i}{c^2 - U^2}\right) = 0,
	\end{cases}
\end{equation}
where $n$ is the rest mass density, ${\bf U} = (U_1, U_2, U_3)^{\!\top}$ is the velocity with $U^2 = U_1^2 + U_2^2 + U_3^2$, the constant $c$ is the speed of light, $p$ is the pressure, and $\rho$ is the mass-energy density satisfying 
\begin{equation}\label{rho}
	\rho = n \left( 1 + \frac{e}{c^2} \right),
\end{equation}
with $e$ being the specific internal energy. For an ideal polytropic gas, the specific internal energy satisfies
\begin{equation}\label{eq:internal-energy}
	e = C_v T= \frac{p}{(\gamma - 1)n},
\end{equation} 
 where $C_v = \frac{R}{\gamma - 1}$ is the specific heat at constant volume,  $R > 0$ is the gas constant, $\gamma > 1$ is the adiabatic exponent, and  $T$ is the temperature.  The equation of state is of the form
\begin{equation}\label{p}
	p = K(S) n^{\gamma}.
\end{equation}


Furthermore, the relativistic Bernoulli quantity is defined as
\begin{equation}\label{rel-bernoulli}
	\mathcal{B} = \frac{p+\rho c^2}{n \sqrt{c^2 - U^2}}.
\end{equation}

Since rewriting \eqref{rela-euler-1} in spherical coordinates may give rise to ``singular terms” that pose several difficulties, we introduce the ``spherical projection coordinates" $(y_1,y_2,y_3)$ as below
\begin{equation}\label{eq:spherical_transformation}\begin{aligned}
		y_1 & = \sqrt{x_1^2 + x_2^2 + x_3^2}, \\
		y_2 &= \frac{x_2}{x_1 + \sqrt{x_1^2 + x_2^2 + x_3^2}} , \\
		y_3 &= \frac{x_3}{x_1 + \sqrt{x_1^2 + x_2^2 + x_3^2}} .
	\end{aligned}
\end{equation}
The inverse transformation is given by
\begin{equation*}
	x_1 = \frac{1 - |y'|^2}{1 + |y'|^2}\, y_1,\quad
	x_2 = \frac{2 y_1 y_2}{1 + |y'|^2},\quad
	x_3 = \frac{2 y_1 y_3}{1 + |y'|^2},
\end{equation*}
where $y'=(y_2,y_3)$. Under this transformation, the hemispherical shell $\Omega$ is mapped onto the cylindrical domain  
\begin{equation*}
	\mathcal Y \coloneqq \big\{ (y_1,y') \mid r_1<y_1<r_2,\ y' \in E \big\},
\end{equation*}
where $E \coloneqq \{y' \in\mathbb{R}^2 \mid |y'| < 1 \}$ (see Figure~\ref{fig:cylinder}).

\begin{figure}[H]
	\centering
	\noindent
	\begin{minipage}[b]{0.49\textwidth} 
		\centering
		\begin{tikzpicture}[scale=0.65, >=Stealth] 
			\def\R{4}      
			\def\r{2.2}    
			\def\Rz{1.2}   
			\def\rz{0.66}  
			
			\draw[dashed, draw=gray!80, thick] (\R, 0) arc (0:180:{\R} and {\Rz});
			\draw[dashed, draw=gray!80, thick] (\r, 0) arc (0:180:{\r} and {\rz});
			
			\fill[gray!20, opacity=0.6, even odd rule] 
			(0,0) ellipse ({\R} and {\Rz})
			(0,0) ellipse ({\r} and {\rz});
			
			\fill[gray!30, opacity=0.7]
			(\R,0) arc (0:180:\R) -- (-\r,0) arc (180:0:\r) -- cycle;
			
			\draw[thick, black] (\R, 0) arc (0:180:\R);
			\draw[thick, black] (\r, 0) arc (0:180:\r);
			
			\draw[thick, black] (\R, 0) arc (0:-180:{\R} and {\Rz});
			\draw[thick, black] (\r, 0) arc (0:-180:{\r} and {\rz});
			
			\draw[->, thick] (0,0) -- (0, \R+1) node[above] {$x_1$};
			\draw[->, thick] (0,0) -- (\R+1, 0) node[right] {$x_2$};
			\draw[->, thick] (0,0) -- (-2, -1.5) node[below left] {$x_3$};
			
			\draw[->, dashed] (0,0) -- (45:\R) node[midway, right] {$r_2$};
			\draw[->, dashed] (0,0) -- (135:\r) node[midway, left] {$r_1$};
			
			\node[black, font=\large] at (60:3.2) {$\Omega$};
			\node[gray!80!black, font=\small] at (3.5, -0.6) {$x_1 = 0$};
			
			\useasboundingbox (-4.5, -2.0) rectangle (5.5, 5.5);
		\end{tikzpicture}
		\caption{\mbox{The hemispherical shell $\Omega$}}
		\label{fig:shell}
	\end{minipage}%
	\hfill
	\begin{minipage}[b]{0.49\textwidth} 
		\centering
		\begin{tikzpicture}[scale=0.65, >=Stealth] 
			\def\rOne{1.5}  
			\def\rTwo{5.5}  
			\def\R{2}       
			\def\Rz{0.6}    
			
			\draw[->, thick] (0,0) -- (\rTwo + 1.8, 0) node[right] {$y_1$};
			\draw[->, thick] (0,0) -- (0, \R + 1) node[above] {$y_2$};
			\draw[->, thick] (0,0) -- (-1.5, -1.2) node[below left] {$y_3$};
			
			\draw[dashed, draw=gray!80, thick] (\rOne, -\R) arc (-90:90:{\Rz} and {\R});
			
			\fill[gray!30, opacity=0.7] 
			(\rOne, \R) -- (\rTwo, \R) 
			arc (90:-90:{\Rz} and {\R}) -- (\rOne, -\R) 
			arc (270:90:{\Rz} and {\R}) -- cycle;
			
			\fill[gray!45, opacity=0.8] (\rTwo, 0) ellipse ({\Rz} and {\R});
			
			\draw[thick, black] (\rOne, \R) -- (\rTwo, \R);
			\draw[thick, black] (\rOne, -\R) -- (\rTwo, -\R);
			
			\draw[thick, black] (\rOne, \R) arc (90:270:{\Rz} and {\R});
			\draw[thick, black] (\rTwo, 0) ellipse ({\Rz} and {\R});
			
			\fill (\rOne, 0) circle (1.5pt) node[below] {$r_1$};
			\fill (\rTwo, 0) circle (1.5pt) node[below] {$r_2$};
			
			\node[black, font=\Large] at ({(\rOne+\rTwo)/2}, 0) {$\mathcal{Y}$};
			
			\draw[<->, dashed, thick, black] (\rTwo, 0) -- (\rTwo, \R) node[above] {$|y'|=1$};
			
			\node[black, font=\small] at (\rTwo - 0.2, -\R/2) {$E$};
			
			\useasboundingbox (-2.5, -2.8) rectangle (7.5, 4.7);
		\end{tikzpicture}
		\caption{The region $\mathcal{Y}$}
		\label{fig:cylinder}
	\end{minipage}
\end{figure}

Define the vector field ${\bf u} = (u_1, u_2, u_3)$ as
\begin{align*} 
	u_1 &= \frac{1 - |y'|^2}{1 + |y'|^2} U_1 + \frac{2 y_2}{1 + |y'|^2} U_2 + \frac{2 y_3}{1 + |y'|^2} U_3, \\
	u_2 &= - \frac{2 y_2}{1 + |y'|^2} U_1 + \frac{1 + y_3^2 - y_2^2}{1 + |y'|^2} U_2 - \frac{2 y_2 y_3}{1 + |y'|^2} U_3, \\
	u_3 &= - \frac{2 y_3}{1 + |y'|^2} U_1 - \frac{2 y_2 y_3}{1 + |y'|^2} U_2 + \frac{1 + y_2^2 - y_3^2}{1 + |y'|^2} U_3, \end{align*}
then  ${\bf U}(x)= u_1(y) \widehat{\bf e}_1(y) +u_2(y) \widehat{\bf e}_2(y) +u_3(y) \widehat{\bf e}_3(y)$,
with
\begin{equation*}
	\begin{aligned}
		&\widehat{\mathbf{e}}_1(y)  = \frac{1}{1 + |y'|^2}
		\begin{pmatrix}
			1 - |y'|^2  & 2y_2  & 2y_3 
		\end{pmatrix}^{\!\top}
		,\\
		& \widehat{\mathbf{e}}_2(y)  = \frac{1}{1 + |y'|^2}
		\begin{pmatrix}
			-2y_2 &
			1 + y_3^2 - y_2^2 &
			-2y_2 y_3
		\end{pmatrix}^{\!\top},\\
		&\widehat{\mathbf{e}}_3(y)  = \frac{1}{1 + |y'|^2}
		\begin{pmatrix}
			-2y_3 &
			-2y_2 y_3 &
			1 + y_2^2 - y_3^2
		\end{pmatrix}^{\!\top}.
	\end{aligned}
\end{equation*}
By construction, $\{\widehat{\mathbf e}_i\}_{i=1}^3$ is an orthonormal basis of $\mathbb{R}^3$. In the new coordinates, the Laplace operator, directional derivative, divergence, and curl take the following forms. Define $\psi(\mathscr{S}^{-1}y)=\widehat{\psi}(y)$ and ${\bf U}(\mathscr{S}^{-1} y)=u_1(y) \widehat{{\bf e}}_1(y) + u_2(y) \widehat{{\bf e}}_2(y) + u_3(y) \widehat{{\bf e}}_3(y)$, then
\begin{align}\label{y1}
	&\Delta_x \psi(x)= \partial_{y_1}^2\widehat{\psi}+\frac{(1+|y'|^2)^2}{4y_1^2}\sum_{j=2}^3\partial_{y_j}^2\widehat{\psi}+\frac{2}{y_1}\partial_{y_1}\widehat{\psi} ,\\\label{y2}
	&{\bf U}(x)\cdot\nabla_x= u_1\partial_{y_1}+\frac{1+|y'|^2}{2y_1}(u_2\partial_{y_2}+u_3\partial_{y_3}),\\\label{y3}
	&\text{div }{\bf U}(x)= \partial_{y_1}u_1(y) + \frac{1+|y'|^2}{2y_1}\sum_{j=2}^3\partial_{y_j}u_j+\frac{2u_1}{y_1} -\frac{1}{y_1}\sum_{j=2}^3 y_j u_j,\\\label{y4}
	&\text{curl }{\bf U}(x)=\omega_1(y) \widehat{{\bf e}}_1 +\omega_2(y) \widehat{{\bf e}}_2 +\omega_3(y) \widehat{{\bf e}}_3,
\end{align}
where
\begin{align*}
	&\omega_1= \frac{1+|y'|^2}{2y_1}(\partial_{y_2} u_3- \partial_{y_3} u_2) + \frac{1}{y_1}(y_3 u_2-y_2 u_3),\\
	&\omega_2=\frac{1+|y'|^2}{2y_1}\partial_{y_3} u_1- \partial_{y_1} u_3- \frac{u_3}{y_1},\\
	&\omega_3= \partial_{y_1} u_2+\frac{u_2}{y_1}- \frac{1+|y'|^2}{2y_1}\partial_{y_2} u_1.
\end{align*}

Under the ``spherical projection coordinates", the equations \eqref{rela-euler-1} becomes 
\begin{equation}\label{relativistic-euler-y}
	\begin{cases}
		\partial_{y_1}\!\left( \frac{n u_1}{\sqrt{c^2 - u^2}} \right)
		+ \frac{1 + |y'|^2}{2y_1} \left[ \partial_{y_2}\!\left( \frac{n u_2}{\sqrt{c^2 - u^2}} \right) 
		+ \partial_{y_3}\!\left( \frac{n u_3}{\sqrt{c^2 - u^2}} \right) \right]
		+ \frac{2}{y_1} \frac{n u_1}{\sqrt{c^2 - u^2}}
		- \frac{1}{y_1} \frac{n(y_2 u_2 + y_3 u_3)}{\sqrt{c^2 - u^2}} = 0, \\
		\left( u_1 \partial_{y_1} 
		+ \frac{1 + |y'|^2}{2y_1} \left( u_2 \partial_{y_2} + u_3 \partial_{y_3} \right) \right) u_1 
		- \frac{u_2^2 + u_3^2}{y_1}
		+ \frac{c^2 - u^2}{p + \rho c^2} \partial_{y_1} p = 0, \\[12pt]
		\left( u_1 \partial_{y_1} 
		+ \frac{1 + |y'|^2}{2y_1} \left( u_2 \partial_{y_2} + u_3 \partial_{y_3} \right) \right) u_2
		+ \frac{u_1 u_2}{y_1} 
		- \frac{u_3}{y_1} (y_3 u_2 - y_2 u_3)
		+ \frac{c^2 - u^2}{p + \rho c^2} \frac{1 + |y'|^2}{2y_1} \partial_{y_2} p = 0, \\
		\left( u_1 \partial_{y_1} 
		+ \frac{1 + |y'|^2}{2y_1} \left( u_2 \partial_{y_2} + u_3 \partial_{y_3} \right) \right) u_3
		+ \frac{u_1 u_3}{y_1} 
		+ \frac{u_2}{y_1} (y_3 u_2 - y_2 u_3)
		+ \frac{c^2 - u^2}{p + \rho c^2} \frac{1 + |y'|^2}{2y_1} \partial_{y_3} p = 0, \\
		\left( u_1 \partial_{y_1} 
		+ \frac{1 + |y'|^2}{2y_1} \left( u_2 \partial_{y_2} + u_3 \partial_{y_3} \right) \right) \mathcal{B}  = 0.
	\end{cases}
\end{equation}

\begin{remark}
	The set of ``spherical projection coordinates" \eqref{eq:spherical_transformation} is a composition of spherical coordinates and the stereographic projection, and was first introduced by Weng \cite{Weng25}. Under this coordinate transformation, the equations \eqref{relativistic-euler-y} contain no singular terms, which is beneficial for our subsequent stability analysis. Furthermore, we observe that the transformed equations are structurally similar to the steady Euler equations.
\end{remark}

First, we consider a class of spherically symmetric shock solutions to \eqref{relativistic-euler-y} with only a nontrivial radial velocity, for which the system reduces to  
\begin{equation}\label{rel-1d-euler}
	\begin{cases}
		\left( y_1^2 \dfrac{\bar{n} \bar{u}}{\sqrt{c^2 - \bar{u}^2}} \right)' = 0, \\
		\bar{u} \bar{u}' + \dfrac{c^2 - \bar{u}^2}{\bar{p} + \bar{\rho} c^2} \bar{p}' = 0, \\
		\bar{\mathcal{B}}' = 0.
	\end{cases}
\end{equation}
The corresponding Rankine-Hugoniot conditions and physical entropy condition at the shock front $y_1 = r_s$ are given by
\begin{equation}\label{RH-jump}
	\left[ \frac{\bar{n} \bar{u}}{\sqrt{c^2 - \bar{u}^2}} \right](r_s)
	= \left[ \frac{\bar{p} + \bar{\rho} c^2}{c^2 - \bar{u}^2} \, \bar{u}^2 + \bar{p} \right] (r_s)
	= [\bar{\mathcal{B}}] (r_s) = 0, \qquad
	\bar{S}^+ > \bar{S}^-.
\end{equation}
Here, $[f](r_s) \coloneqq f(r_s+) - f(r_s-)$ denotes the jump of $f$ across the shock front $y_1 = r_s$.

One can establish the existence and uniqueness of the transonic shock solution to the system \eqref{rel-1d-euler} for $y_1 \in [r_1, r_2]$, as stated in the following proposition.
\begin{proposition}\label{1d-existence}
	Assume that the supersonic flow is prescribed at the inlet $y_1 = r_1$, that is 
	\begin{equation}\label{inlet-cond}
		\bar{\mathbf{u}}(r_1) = (\bar{u}_0^- , 0, 0 )^{\!\top}, \quad \bar{n}(r_1) = \bar{n}_0^- > 0, \quad \bar{S}(r_1) = \bar{S}_0^- > 0,
	\end{equation}
	where $(\bar{u}_0^-)^2 > c_s^2(\bar{n}_0^-, \bar{S}_0^-)  > 0$ with $c_s = \sqrt{\partial_{\rho} p}$ being the local sound speed and $\bar{S}_0^-$ being a constant.
	Then, there exist two positive constants $P_1$ and $P_2$ ($P_1 < P_2$), depending only on the incoming supersonic flow and $r_1, r_2$, such that for any given exit pressure $p_e \in (P_1, P_2)$, the system \eqref{rel-1d-euler} admits a unique piecewise smooth shock solution
	\begin{equation}\label{solution-structure}
		\bar{\bm{\Psi}} (y_1) =
		\begin{cases}
			\bar{\bm{\Psi}}^- (y_1) \coloneqq (\bar{u}^-, 0, 0, \bar{n}^-, \bar{S}^-)(y_1), & y_1 \in [r_1, r_s), \\
			\bar{\bm{\Psi}}^+ (y_1) \coloneqq (\bar{u}^+,0, 0, \bar{n}^+, \bar{S}^+)(y_1), & y_1 \in (r_s, r_2],
		\end{cases}
	\end{equation}
	which satisfies the incoming supersonic state and the exit boundary condition $\bar{p}^+(r_2) = p_e$ with a shock front at $y_1 = r_s \in (r_1, r_2)$ satisfying \eqref{RH-jump}. Moreover, the shock front $y_1 = r_s $ increases as the exit pressure $p_e$ decreases.
\end{proposition}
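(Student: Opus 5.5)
The plan follows the classical one-dimensional argument of Courant--Friedrichs: first analyze the ODE system \eqref{rel-1d-euler} on each side of the shock, then shoot in the shock position $r_s$ and use monotonicity of the exit pressure in $r_s$.

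\emph{Step 1: reduction to a scalar ODE.} Using the conserved quantities $y_1^2 \bar n\bar u/\sqrt{c^2-\bar u^2}=J$ and $\bar{\mathcal B}=\mathcal B_0$, together with the entropy being constant along smooth solutions (this follows from the momentum and energy equations, equivalently $\bar p = K(\bar S)\bar n^\gamma$ with $\bar S$ constant), each smooth branch reduces to a single ODE for $\bar u$. Differentiating the mass relation and substituting the momentum equation gives
\begin{equation*}
\bar u'\,\frac{\bar u^2-c_s^2}{\bar u(c^2-\bar u^2)} = \frac{2c_s^2}{y_1}\cdot\frac{1}{\text{(positive factor)}},
\end{equation*}
with the precise positive factors to be computed. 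Hence a supersonic flow with $\bar u>c_s$ accelerates ($\bar u'>0$), $\bar n$ and $\bar p$ decrease, and the flow stays supersonic. The supersonic branch $\bar{\bm\Psi}^-$ therefore exists on all of $[r_1,r_2]$ by the inlet data \eqref{inlet-cond}. A subsonic flow decelerates and stays subsonic, so $\bar p^+$ increases in $y_1$. I will verify the sonic degeneracy cannot be reached by working with the algebraic relation $F(\bar u; y_1)=0$ coming from $J,\mathcal B_0,\bar S$: for fixed $y_1$ the mass flux as a function of $\bar u$ (at fixed $\mathcal B_0,\bar S$) is maximized exactly at the sonic speed. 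Since $y_1^2$ grows, each branch moves away from sonic.

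\emph{Step 2: the shock relations.} At a given $r_s$, with supersonic state $\bar{\bm\Psi}^-(r_s)$, the relativistic Rankine--Hugoniot relations \eqref{RH-jump} admit, besides the trivial solution, a unique subsonic state $\bar{\bm\Psi}^+(r_s)$ with $\bar S^+>\bar S^-$ and higher pressure (Lax/entropy condition, via the standard relativistic Hugoniot analysis for polytropic gases). Then integrate the subsonic ODE from $r_s$ to $r_2$, which is globally possible by Step 1, and define $P(r_s):=\bar p^+(r_2;r_s)$. Set $P_1=P(r_2)$ (shock at the exit) and $P_2=P(r_1)$ (shock at the inlet).

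\emph{Step 3: monotonicity, the main obstacle.} I need to show $dP/dr_s<0$. Then the intermediate value theorem and strict monotonicity give existence, uniqueness, and the claimed monotone dependence of $r_s$ on $p_e$, and also $P_1<P_2$. To do this, differentiate in $r_s$. The invariants downstream are $\mathcal B^+=\mathcal B_0$ (unchanged across the shock) and $J$ (unchanged), so the only $r_s$-dependence of the downstream solution is through $\bar S^+(r_s)$. At fixed $y_1=r_2$, $J$ and $\mathcal B_0$, the subsonic solution of $F=0$ gives $\bar p^+(r_2)$ as a function of $\bar S^+$ alone. I expect $\partial \bar p^+/\partial \bar S^+<0$ (larger entropy means lower stagnation pressure). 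It then suffices to show that $\bar S^+(r_s)$ increases with $r_s$, i.e. that shock strength grows as the upstream Mach number grows. Upstream Mach number increases with $y_1$ by Step 1, and the entropy jump is increasing in the upstream Mach number (relativistic Hugoniot computation). This last derivative computation, done in relativistic variables with $c$ finite, is where I expect the real work. I would parametrize by the upstream normal speed in the shock frame and use the Taub adiabat, or directly differentiate the jump relations implicitly and check signs using $\bar u^->c_s^->\ldots$ and $\bar u^+<c_s^+$.
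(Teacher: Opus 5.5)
\textbf{The gap is in Step 3.} Your outline has the same structure as the paper's proof in the Appendix. Both proceed through the global supersonic branch, the unique subsonic Rankine--Hugoniot state, and the observation that the downstream flow depends on $r_s$ only through $\bar S^+$ (since $J_0$ and $\mathcal B_0$ are shared). Both then use that the exit pressure is decreasing in $\bar S^+$, that $\bar S^+$ is increasing in $r_s$, and finish with monotonicity plus continuity. The paper differentiates in $p_e$ rather than $r_s$, which is equivalent.

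Step 3 is the whole content of the monotonicity claim, yet you leave it as two expected signs. The route you suggest for the second sign also does not work as stated. The relativistic jump conditions for a polytropic gas contain an extra dimensionless parameter, e.g.\ $\Theta^-=\bar p^-/(\bar n^-c^2)$ (equivalently $\bar c_s^-/c$). So the entropy jump is a function of the upstream Mach number \emph{and} $\Theta^-$, not of the Mach number alone. Along the supersonic flow $\Theta^-$ also varies with $r_s$: it decreases as $\bar n^-$ decreases. Hence ``the Mach number increases and the jump increases with the Mach number'' does not give $d\bar S^+/dr_s>0$ unless you also control the $\Theta^-$-dependence.

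What closes the step is to differentiate along the actual one-parameter family, where $\mathcal B_0$ is fixed and the mass-flux density is $j=J_0/r_s^2$. Let $h=(p+\rho c^2)/n$. Then $dh=T\,dS+dp/n$, $\mathcal B=h/\sqrt{c^2-u^2}$ is constant, and $\mathcal B j=nhu/(c^2-u^2)$. For the momentum flux $\mathcal B j u+p$ this gives
\[
d(\mathcal B j u+p)=\mathcal B u\,dj-nT\,dS .
\]
Equate this on the two sides of the shock, using $d\bar S^-=0$, $dj=-2j\,dr_s/r_s$ and $[\bar p]=\mathcal B j(\bar u^--\bar u^+)$. This yields
\[
\frac{d\bar S^+}{dr_s}=\frac{2[\bar p](r_s)}{r_s\,\bar n^+(r_s)\,\bar T^+(r_s)}>0 ,
\]
which is equivalent to \eqref{derivation-3}. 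Only compressiveness $\bar u^->\bar u^+$ enters here, not the sonic inequalities.

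For the first sign, work at $y_1=r_2$ with $j=J_0/r_2^2$ and $\mathcal B$ fixed. Combine the same identities with $d(\rho c^2)=h\,dn+nT\,dS$, $c_s^2=(\partial p/\partial\rho)_S$ and $(\partial_S p)_n=(\gamma-1)nT$. This gives, with all quantities evaluated at $r_2$,
\[
\frac{d\bar p^+(r_2)}{d\bar S^+}=-\bar n^+\bar T^+\,\frac{\bar c_s^2+(\gamma-1)(\bar u^+)^2}{\bar c_s^2-(\bar u^+)^2}<0
\]
on the subsonic branch. This is the reciprocal of the paper's formula for $d\bar S^+/dp_e$. With these two computations in place, your argument is complete.
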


\begin{remark}
	Since the proof is rather lengthy, we defer it to the Appendix.
\end{remark}


In what follows, the above solution  $ \bar{\bm{\Psi}}$ will be called the background solution. Clearly, the supersonic and subsonic parts of $ \bar{\bm{\Psi}}$ can be extended in a natural way, respectively. With an abuse of notations, we still call the extended subsonic and supersonic solutions $\bar{\bm{\Psi}}^+$ and $\bar{\bm{\Psi}}^-$, respectively.

Transonic shock problems in flat nozzles have been extensively investigated by many researchers, see \cite{CF03,Chen05,Chen08,CY08,FX21,FGXZ24,LS21, WZZ25, XY05,XY08} and references therein. For potential flows, the existence and stability of multidimensional transonic shocks with Dirichlet boundary condition was established by Chen and Feldman \cite{CF03}. Xin and Yin \cite{XY05} proved that the transonic shock problem using the potential flow model is ill-posed for the pressure condition at the exit. Chen \cite{Chen05} proved the stability of transonic shock fronts in two-dimensional nozzles under given pressure perturbations, which was generalized by Chen and Yuan \cite{CY08} to three-dimensional cylinders with square cross-sections. The well-posedness for non-isentropic potential flows in multidimensional divergent nozzles was established by Bae and Feldman \cite{BF11}. Fang and Xin \cite{FX21} investigated the existence and stability of two-dimensional steady compressible Euler flows in an almost flat finite nozzle. In particular, they found that the shock location can be uniquely determined by solving a free boundary problem for the linearized Euler system. More recently, Fang, Gao, Xiang and Zhao \cite{FGXZ24} established the existence of transonic shock solutions with large vorticity or large swirl velocity.


Now we discuss the state of the art for the stability of the transonic shocks in divergent sectors and conic nozzles. Li, Xin and Yin \cite{LXY09,LXY09MRL} established the existence and uniqueness of transonic shock solutions in 2D divergent nozzles with variable exit pressure. They \cite{LXY10} also proved the stability of transonic shocks in conical nozzles under the axi-symmetric perturbations of the exit pressure. For general two-dimensional de Laval nozzles, the authors \cite{LXY13} proved that transonic shock solutions to the full compressible Euler system exist and exhibit structural stability. Assuming some ``structural conditions" on spherically symmetric solutions, Liu, Xu and Yuan \cite{LXY16} proved the conditional stability of transonic shocks under multidimensional perturbations of exit pressure conditions.  Weng, Xie and Xin \cite{WXX21} introduced a reversible modified Lagrangian transformation and showed the stability of transonic shocks under axisymmetric perturbations of the nozzle shape. Recently, a significant progress was made by Weng and Xin \cite{WX26}, where they established the existence and stability of cylindrical transonic shocks under three-dimensional perturbations of the incoming flow and exit pressure. The structural stability of spherically symmetric transonic shock solutions in three-dimensional hemispherical shells was proved by Weng \cite{Weng25}, where he introduced the ``spherical projection coordinates", which we adopt as a crucial tool in this work. The deformation-curl decomposition developed by Weng and Xin \cite{WengXin19} played critical roles in \cite{WX26,Weng25}, and showed its robustness and universality in many circumstances, including the steady compressible MHD equations \cite{WY26} and smooth transonic flows \cite{WXCMP26}.

Let us also discuss some works for the unsteady and steady relativistic Euler equations. Pan and Smoller \cite{PS06} studied the finite-time blow-up of smooth solutions to the relativistic Euler equations. 
Ding and Li \cite{DL24, DL26} studied the global stability of rarefaction waves and large 1-shocks for two-dimensional steady supersonic flows passing Lipschitz wedges. 
For the 2D steady relativistic Euler equations, the authors in \cite{Fan23} characterized the properties of sonic--supersonic mixed flows and semi-hyperbolic patches. 
Furthermore, Lai \cite{Lai23b} extended the characteristic decompositions method to the axisymmetric relativistic Euler equations.

This paper will examine the stability of spherically symmetric transonic shock solutions in Proposition \ref{1d-existence} under small perturbations of exit pressure. The relativistic Rankine-Hugoniot conditions are
\begin{equation}\label{RH}
	\begin{cases}
		\left[ \dfrac{n u_1}{\sqrt{c^2 - u^2}} \right] 
		- \dfrac{1 + |y'|^2}{2\xi} \left( \partial_{y_2}\xi \left[ \dfrac{n u_2}{\sqrt{c^2 - u^2}} \right] 
		+ \partial_{y_3}\xi \left[ \dfrac{n u_3}{\sqrt{c^2 - u^2}} \right] \right) = 0, \\[16pt]
		\left[ \dfrac{(p + \rho c^2) u_1^2}{c^2 - u^2} + p \right] 
		- \dfrac{1 + |y'|^2}{2\xi} \left( \partial_{y_2}\xi \left[ \dfrac{(p + \rho c^2) u_1 u_2}{c^2 - u^2} \right] 
		+ \partial_{y_3}\xi \left[ \dfrac{(p + \rho c^2) u_1 u_3}{c^2 - u^2} \right] \right) = 0, \\[16pt]
		\left[ \dfrac{(p + \rho c^2) u_1 u_2}{c^2 - u^2} \right] 
		- \dfrac{1 + |y'|^2}{2\xi} \left( \partial_{y_2}\xi \left[ \dfrac{(p + \rho c^2) u_2^2}{c^2 - u^2} + p \right] 
		+ \partial_{y_3}\xi \left[ \dfrac{(p + \rho c^2) u_2 u_3}{c^2 - u^2} \right] \right) = 0, \\[16pt]
		\left[ \dfrac{(p + \rho c^2) u_1 u_3}{c^2 - u^2} \right] 
		- \dfrac{1 + |y'|^2}{2\xi} \left( \partial_{y_2}\xi \left[ \dfrac{(p + \rho c^2) u_2 u_3}{c^2 - u^2} \right] 
		+ \partial_{y_3}\xi \left[ \dfrac{(p + \rho c^2) u_3^2}{c^2 - u^2} + p \right] \right) = 0, \\[16pt]
		\left[ \dfrac{p + \rho c^2}{n \sqrt{c^2 - u^2}} \right] = 0.
	\end{cases}
\end{equation}
Moreover, the physical entropy condition is also satisfied
\begin{equation}\label{entropy-con}
	S^+ (\xi(y'), y') > S^- (\xi(y'), y') \quad \text{on } y_1 = \xi(y').
\end{equation}

The problem of selecting physically admissible shocks from Rankine-Hugoniot solutions was resolved for non-relativistic gases by Bethe \cite{Bethe} and Weyl \cite{Weyl}. The study of relativistic shock waves was pioneered by Taub \cite{Taub}, who introduced the celebrated Taub adiabat. Weyl's results were later extended to the relativistic case by Israel \cite{Israel}, and more recently, Asakura \cite{Asakura} investigated the R-H conditions to the relativistic Euler equations and developed the Bethe-Weyl theory for relativistic Euler systems systematically.

We prescribe the following boundary conditions. At the inlet $y_1 = r_1$, the incoming supersonic state is given by
\begin{equation}\label{inlet-per}
	\mathbf{\Psi}^- (r_1, y') = \bar{\mathbf{\Psi}}^- (r_1), \quad y' \in E.
\end{equation}
On the wall $\{(y_1, y')\mid r_1 \leq y_1 \leq r_2, |y'| = 1 \}$, the slip boundary condition yields
\begin{equation}\label{wall-bdry}
	y_2 u_2 (y_1, y') +  y_3 u_3 (y_1, y') = 0.
\end{equation}
At the exit $y_1 = r_2$, the pressure is prescribed as a small perturbation of the background pressure 
\begin{equation}\label{p-perturbation}
	p(r_2, y') = \bar{p}^+ (r_2) + \epsilon p_{ex} (y'), \quad y' \in E,
\end{equation}
where $\epsilon > 0$ is sufficiently small, $p_{ex} \in C^{2, \alpha} (\bar{E})$ satisfies the compatibility condition
\begin{equation}\label{com-p}
	( y_2 \partial_{y_2} + y_3 \partial_{y_3}) p_{ex} (y') = 0, \quad \text{on } |y'| = 1.
\end{equation}

The existence and uniqueness of transonic shock solutions satisfying the above boundary conditions can be proved.  Our main result is stated as follows.

\begin{theorem}\label{existence}
	{\it There exists $\epsilon_0>0$ depending only on the background solution $\overline{\bm{\Psi}}$ and the boundary data $p_{ex}\in C^{2,\alpha}(\overline{E})$ satisfying the compatibility conditions \eqref{com-p}, such that if $0\leq \epsilon<\epsilon_0$, the problem \eqref{relativistic-euler-y} with \eqref{RH}, \eqref{inlet-per}, \eqref{wall-bdry} and \eqref{p-perturbation} has a unique solution $\bm{\Psi}^+=(u_1^+,u_2^+,u_3^+,n^+,S^+)(y_1,y')$ with the shock front $\mathcal{S}: y_1=\xi(y')$ satisfying
		\begin{enumerate}[(i)]
			\item The function $\xi(y')\in C^{3,\alpha}(\overline{E})$ satisfies
			\begin{equation*}
				\|\xi(y')-r_s\|_{C^{3,\alpha}(\overline{E})}\leq C_*\epsilon,
			\end{equation*}
			where $C_*$ depends only on the background solution and the incoming flow and the exit pressure.
			\item The solution $\bm{\Psi}^+=(u_1^+,u_2^+,u_3^+,n^+,S^+)(y_1,y')\in C^{2,\alpha}(\overline{\mathcal{Y}_+})$ with $\mathcal{Y}_+=\{(y_1,y')\mid \xi(y')<y_1<r_2,y'\in E\}$ satisfies the entropy condition
			\begin{equation*}
				S^+(\xi(y')+,y') > S^-(\xi(y')-, y')\quad \text{for}\,\, y'\in E,
			\end{equation*}
			and
			\begin{equation*}
				\|(u_1^+-\bar{u}^+,u_2^+,u_3^+,n^+-\bar{n}^+,S^+-\bar{S}^+)\|_{C^{2,\alpha}(\overline{\mathcal{Y}_+})}\leq C_*\epsilon.
			\end{equation*}
		\end{enumerate}
}\end{theorem}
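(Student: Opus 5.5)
Since the incoming flow is prescribed by the unperturbed background state at $y_1=r_1$, the supersonic flow in front of the shock is exactly $\bar{\bm\Psi}^-$ (extended past $r_s$), so the problem reduces to a free boundary problem for the subsonic region $\mathcal Y_+$. I would use the deformation-curl decomposition of Weng--Xin, adapted to the relativistic system \eqref{relativistic-euler-y}, and then a contraction argument. The unknowns are $\bm\Psi^+$ and $\xi$, and the solution is sought as a small perturbation of $(\bar{\bm\Psi}^+, r_s)$.

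\textbf{Decomposition.} First, the Bernoulli quantity $\mathcal B$ and the entropy $S$ are transported along the streamlines $u_1\partial_{y_1}+\frac{1+|y'|^2}{2y_1}(u_2\partial_{y_2}+u_3\partial_{y_3})$. Their values on $y_1=\xi(y')$ come from the Rankine--Hugoniot conditions \eqref{RH}; in particular $[\mathcal B]=0$ gives $\mathcal B\equiv \bar{\mathcal B}$ everywhere. Second, I would rewrite the remaining equations in terms of $\mathcal B$, $S$ and ${\bf u}$. The continuity equation then becomes a quasilinear equation $(c_s^2\text{-dependent})$ of the form $\mathrm{div}\,{\bf u} - \text{(Lorentz-weighted)}\,{\bf u}\cdot\nabla{\bf u}\cdot{\bf u}/(\dots)=\dots$, which is elliptic in the subsonic region. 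The momentum equations give the vorticity $\omega$ in terms of $\nabla S$ and $\nabla\mathcal B$, a relativistic Crocco relation. Third, the vorticity components $\omega_2,\omega_3$ are determined by transport of $\omega_1$-type quantities along streamlines, using $\mathrm{div}\,\omega=0$ and the expressions \eqref{y4}.

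\textbf{Linearization and the shock.} I would linearize about $\bar{\bm\Psi}^+$ and write $\xi=r_s+\eta$. Combining the \eqref{RH} relations with the known supersonic state gives, at $y_1=r_s$:
\begin{itemize}
\item an oblique condition for $u_1$ of the form $\partial_{y_1}$-free relation $u_1-\bar u^+ = a\,\eta + \text{h.o.t.}$, with $a\neq 0$ coming from the $y_1$-dependence of $\bar{\bm\Psi}^\pm$;
\item tangential relations $u_j=b\,\frac{1+|y'|^2}{2}\partial_{y_j}\eta$ for $j=2,3$.
\end{itemize}
Thus $\eta$ is a potential for the tangential velocity at the shock. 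At the exit, the pressure condition \eqref{p-perturbation}, together with the known $\mathcal B$ and $S$, gives Dirichlet-type data for $u_1$. On the wall, \eqref{wall-bdry} holds, and the compatibility condition \eqref{com-p} ensures the corner regularity. The resulting first-order deformation-curl system (div, curl, and normal-trace conditions) for ${\bf u}$ is reduced to a second-order elliptic problem for a potential $\phi$ with the nonlocal shock-position term $\eta$. The solvability condition fixes $\eta$ up to a free constant. That constant, the shock location average, is determined by an integral identity, which is where the monotonicity in Proposition \ref{1d-existence} (strict, $dr_s/dp_e<0$) is used to guarantee nondegeneracy without any structural assumption on the background.

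\textbf{Fixed point and main obstacle.} For given perturbed data in a small ball of $C^{2,\alpha}\times C^{3,\alpha}$, I would solve the linear problems and obtain $C^{2,\alpha}$ estimates. Regularity up to $|y'|=1$ is obtained by even/odd reflection arguments in the $y'$-cylinder, which is valid thanks to \eqref{wall-bdry} and \eqref{com-p}. The map is a contraction for $\epsilon$ small, which gives existence, uniqueness, the $C_*\epsilon$ bounds, and $\xi\in C^{3,\alpha}$ from the potential relation. The entropy condition follows by continuity from $\bar S^+>\bar S^-$.

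I expect the main obstacle to be the uniqueness of the linearized elliptic problem with the nonlocal shock term: showing that the homogeneous problem has only the trivial solution, i.e.\ that the constant mode of $\eta$ is coupled with a nonzero coefficient. I would first try an explicit ODE computation of the radial mode using the background equations \eqref{rel-1d-euler}, reducing the coefficient to $d p_e/dr_s\neq0$. For higher spherical-harmonic modes, I would attempt an energy/maximum principle argument using the sign of $a$.
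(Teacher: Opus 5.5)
Your skeleton matches the paper: the deformation--curl decomposition, $w_1=a_1w_6+\dots$ and $w_j\propto\partial_{y_j}\xi$ from \eqref{RH}, a velocity potential, and a fixed point. However, the step you defer as the ``main obstacle'' is where the proof actually lives, and two parts of your setup point away from it.

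\textbf{The entropy is not known data.} By \eqref{RH-5} its jump on the shock is $a_2w_6+\dots$ with $a_2>0$, so after transport $v_4=\frac{a_2}{a_1}v_1(r_s,z')+R_4$, as in \eqref{v4-exp-1}. This is first order in the unknowns. It enters the curl equations through $\nabla' S$ with an $O(1)$ coefficient, and it enters the exit pressure condition. If you freeze $S$ from the previous iterate, as ``the known $\mathcal B$ and $S$'' suggests, the iteration map depends linearly on $\hat\eta$ with an $O(1)$ Lipschitz constant and is not a contraction. If you keep it, the exit condition is not Dirichlet data for $u_1$. It is the nonlocal relation $v_1(r_2,z')+d_3(r_2)v_1(r_s,z')=q_2$ of \eqref{bdry-5}, and the curl system acquires the terms $d_3(z_1)\nabla'v_1(r_s,z')$ of \eqref{curl}.

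\textbf{There is no free constant in $\eta$ for an integral identity to fix.} The normal relation gives $\eta=(v_1(r_s,\cdot)-R_1)/a_1$ pointwise. The only additive constant that appears, in $q_s$ of Lemma \ref{oblique}, is the gauge of the potential.

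\textbf{The missing idea} is how the tangential and normal jump relations combine into a shock condition \emph{with a sign}. Via Lemma \ref{equi0}, the tangential relations split into two parts. The curl part supplies the shock data \eqref{bdry-omega1} for the transported $\tilde\omega_1$, which your transport step also needs. The divergence part is \eqref{bdry-2}. Insert $\eta$ from the normal relation and write $\tilde v_1=\partial_{z_1}\psi-\frac{d_3(z_1)}{a_3}\partial_{z_1}\psi(r_s,z')$ and $\tilde v_j=\frac{1+|z'|^2}{2z_1}\partial_{z_j}\psi$. One then obtains, see \eqref{psi}:
\begin{itemize}
\item the Robin condition $\partial_{z_1}\psi-a_4\psi=q_s$ on $z_1=r_s$, with $a_4=\alpha_0a_1(1+d_3(r_s))>0$;
\item Neumann conditions at the exit and on the wall;
\item the nonlocal interior term $-\frac{d_4(z_1)}{a_3}\partial_{z_1}\psi(r_s,z')$, with $d_4>0$.
\end{itemize}

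The positivity of $a_4$ and $d_4$ is what removes the kernel, and it uses $\alpha_0,a_1,a_2>0$ and subsonicity, not merely the sign of $a$. Proposition \ref{solvability} rests on exactly these coefficients. One can see it, for instance, mode by mode. Suppose $\psi(r_s)>0$. Then $\psi$ increases off the shock, and the equation's right-hand side $\frac{d_4a_4}{a_3}\psi(r_s)$ is positive. Hence a positive maximum can be attained neither at the shock, nor inside, nor at the exit (where $\partial_{z_1}\psi=0$ forces $\partial_{z_1}^2\psi>0$). So $\psi(r_s)=\partial_{z_1}\psi(r_s)=0$, and then $\psi\equiv0$.

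This argument covers the radial mode as well. The nondegeneracy you wanted from $dr_s/dp_e<0$ is already encoded in these signs, and the monotonicity of the Appendix is never used in the stability proof. As written, your plan for the nonradial modes is not yet an argument, so the claim of stability without structural assumptions on the background is unsupported.
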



This paper is organized as follows. In Section~\ref{reformulation}, we reformulate the relativistic Euler equations. We decouple the elliptic and hyperbolic components of the equations by utilizing the deformation tensor and vorticity in Subsection~\ref{21}. The Rankine-Hugoniot conditions and boundary conditions are rewritten in Subsection~\ref{22}. Finally, we introduce a new coordinate transformation to transform the original problem into a fixed boundary value problem in Subsection~\ref{23}. Section~\ref{proof} is devoted to the proof of the main theorem. In the Appendix, we provide a proof of the existence and uniqueness of solutions to the one-dimensional problem.

\section{The reformulation of the relativistic Euler transonic shock problem}\label{reformulation}

\subsection{An effective decomposition of the steady relativistic Euler equations in terms of the deformation tensor and vorticity}\label{21}

We note that the steady relativistic Euler system is of hyperbolic-elliptic mixed type in subsonic regions, which brings difficulties in both solving the equations and establishing appropriate boundary conditions. To address this, we employ the deformation-curl decomposition introduced by the authors in \cite{WengXin19} to decouple the hyperbolic and elliptic quantities in the system. The details are as follows.

The Bernoulli's quantity and the entropy satisfy the transport equations
\begin{align}\label{rel-deform-1}
	&\left( \partial_{y_1} + \frac{1 + |y'|^2}{2y_1 u_1} (u_2 \partial_{y_2} + u_3 \partial_{y_3}) \right) K(S) = 0,\\ \label{rel-deform-2}
	& \left( \partial_{y_1} + \frac{1 + |y'|^2}{2y_1 u_1} (u_2 \partial_{y_2} + u_3 \partial_{y_3}) \right) \mathcal{B} = 0.
\end{align}

It follows from \eqref{y4}, the third and forth equations in \eqref{relativistic-euler-y} can be rewritten as
\begin{align*}
		& u_1 \omega_3 - u_3 \omega_1 + \frac{1 + |y'|^2}{2y_1} 
		\left(\frac{c^2 - u^2}{\mathcal{B}} \partial_{y_2} \mathcal{B} - \frac{\sqrt{c^2 - u^2}}{\gamma K \mathcal{B}}\left(\mathcal{B} \sqrt{c^2 - u^2} - c^2 \right) \partial_{y_2} K \right) = 0, \\
		& -u_1 \omega_2 + u_2 \omega_1 + \frac{1 + |y'|^2}{2y_1} 
		\left( \frac{c^2 - u^2}{\mathcal{B}} \partial_{y_3} \mathcal{B} - \frac{\sqrt{c^2 - u^2}}{\gamma K \mathcal{B}}\left(\mathcal{B} \sqrt{c^2 - u^2} - c^2 \right)  \partial_{y_3} K \right) = 0.
\end{align*}
Direct computation gives
\begin{equation}\label{rel-deform-4}
	\begin{aligned}
		\omega_2 &= \frac{u_2}{u_1} \omega_1 
		+ \frac{1 + |y'|^2}{2y_1 u_1} \left( \frac{c^2 - u^2}{\mathcal{B} } \partial_{y_3} \mathcal{B}  - \frac{\sqrt{c^2 - u^2}}{\gamma K \mathcal{B} }\left(\mathcal{B}  \sqrt{c^2 - u^2} - c^2 \right)  \partial_{y_3} K \right), \\
		\omega_3 &= \frac{u_3}{u_1} \omega_1 
		- \frac{1 + |y'|^2}{2y_1 u_1} \left(\frac{c^2 - u^2}{\mathcal{B} } \partial_{y_2} \mathcal{B}  - \frac{\sqrt{c^2 - u^2}}{\gamma K \mathcal{B} }\left(\mathcal{B}  \sqrt{c^2 - u^2} - c^2 \right) \partial_{y_2} K \right).
	\end{aligned}
\end{equation}

Since $\text{div } \text{curl }{\bf u} = 0$, then 
\begin{equation}\label{rel-div-omega}
	\partial_{y_1} \omega_1 + \frac{1 + |y'|^2}{2y_1} (\partial_{y_2} \omega_2 + \partial_{y_3} \omega_3) + \frac{2}{y_1} \omega_1 - \frac{1}{y_1} (y_2 \omega_2 + y_3 \omega_3) = 0.
\end{equation}
Combining with \eqref{rel-deform-4} yields
\begin{equation}\label{rel-deform-omega1}
	\begin{aligned}
		&\partial_{y_1} \omega_1 + \frac{1 + |y'|^2}{2y_1 u_1} \sum_{i=2}^{3} u_i \partial_{y_i} \omega_1 
		+ \left[ \frac{1 + |y'|^2}{2y_1} \sum_{i=2}^{3} \partial_{y_i} \left( \frac{u_i}{u_1} \right) + \frac{2}{y_1} - \frac{y_2 u_2 + y_3 u_3}{y_1 u_1} \right] \omega_1 \\
		&\quad + \frac{(1 + |y'|^2)^2}{4y_1^2} \Bigg[ 
		\partial_{y_2} \left( \frac{c^2 - u^2}{u_1 \mathcal{B} } \right) \partial_{y_3} \mathcal{B}  - \partial_{y_3} \left( \frac{c^2 - u^2}{u_1 \mathcal{B} } \right) \partial_{y_2} \mathcal{B}  \\
		&\qquad - \partial_{y_2} \left( \frac{\sqrt{c^2 - u^2}}{\gamma K u_1 \mathcal{B} }(\mathcal{B}  \sqrt{c^2 - u^2} - c^2) \right) \partial_{y_3} K \\
		&\qquad + \partial_{y_3} \left( \frac{\sqrt{c^2 - u^2}}{\gamma K u_1 \mathcal{B} }(\mathcal{B}  \sqrt{c^2 - u^2} - c^2) \right) \partial_{y_2} K \Bigg] = 0.
	\end{aligned}
\end{equation}

One can represent the density as a function of $\mathcal{B}$, $u^2$ and $S$ as
\begin{equation}\label{rel-deform-5}
	n = n (\mathcal{B}, u^2, S) =\left( \frac{\gamma - 1}{\gamma K(S)} \right)^{\frac{1}{\gamma - 1}} 
	\left( \mathcal{B} \sqrt{c^2 - u^2} - c^2 \right)^{\frac{1}{\gamma - 1}} .
\end{equation}
Substituting \eqref{rel-deform-5} into the continuity equation, we derive that
\begin{equation}\label{rel-deform-6}
	\begin{aligned}
		&\left( c^2 (c_s^2 - u_1^2) - c_s^2(u_2^2 + u_3^2)  \right) \partial_{y_1} u_1 
		+ \frac{1 + |y'|^2}{2y_1} \left(c^2 (c_s^2 - u_2^2) - c_s^2 ( u_1^2 + u_3^2) \right) \partial_{y_2} u_2 \\
		&+ \frac{1 + |y'|^2}{2y_1} \left(c^2 (c_s^2 - u_3^2) - c_s^2 (u_1^2 + u_2^2) \right) \partial_{y_3} u_3 
		+ \frac{c_s^2 (c^2 - u^2)}{y_1} \left( 2 u_1 - y_2 u_2 - y_3 u_3 \right) \\
		&=\ (c^2 - c_s^2) \Bigg[ u_1 \sum_{i=2}^{3} u_i \partial_{y_1} u_i 
		+ \frac{1 + |y'|^2}{2y_1} u_2 \sum_{i \neq 2} u_i \partial_{y_2} u_i 
		+ \frac{1 + |y'|^2}{2y_1} u_3 \sum_{i \neq 3} u_i \partial_{y_3} u_i \Bigg],
	\end{aligned}
\end{equation}
where the relativistic local sound speed $c_s$ is given by
\begin{equation}\label{rel-sound-speed-B}
	c_s^2 = \frac{\partial p}{\partial \rho} = \frac{\frac{\partial p }{\partial n}}{\frac{\partial \rho}{\partial n}} = (\gamma - 1) \frac{\mathcal{B} \sqrt{c^2 - u^2} - c^2}{\mathcal{B} \sqrt{c^2 - u^2}} \, c^2 .
\end{equation}

The vorticity equations, along with \eqref{rel-deform-6}, establish a deformation-curl system for the velocity field
\begin{equation}\label{rel-deform-7}
	\begin{cases}
		\left( c^2 (c_s^2 - u_1^2) - c_s^2(u_2^2 + u_3^2)  \right) \partial_{y_1} u_1 
		+ \frac{1 + |y'|^2}{2y_1} \left(c^2 (c_s^2 - u_2^2) - c_s^2 ( u_1^2 + u_3^2) \right)\partial_{y_2} u_2 \\
		\quad + \frac{1 + |y'|^2}{2y_1} \left(c^2 (c_s^2 - u_3^2) - c_s^2 (u_1^2 + u_2^2) \right) \partial_{y_3} u_3 
		+ \frac{c_s^2 (c^2 - u^2)}{y_1} \left( 2u_1 - y_2 u_2 - y_3 u_3 \right) \\
		= (c^2 - c_s^2) \Bigg[ u_1 \sum_{i=2}^{3} u_i \partial_{y_1} u_i 
		+ \frac{1 + |y'|^2}{2y_1} u_2 \sum_{i \neq 2} u_i \partial_{y_2} u_i + \frac{1 + |y'|^2}{2y_1} u_3 \sum_{i \neq 3} u_i \partial_{y_3} u_i \Bigg],\\
		\frac{1 + |y'|^2}{2y_1} (\partial_{y_2} u_3 - \partial_{y_3} u_2) + \frac{1}{y_1} (y_3 u_2 - y_2 u_3) = \omega_1, \\[8pt]
		\frac{1 + |y'|^2}{2y_1} \partial_{y_3} u_1 - \partial_{y_1} u_3 - \frac{u_3}{y_1} = \omega_2, \\[8pt]
		\partial_{y_1} u_2 + \frac{u_2}{y_1} - \frac{1 + |y'|^2}{2y_1} \partial_{y_2} u_1 = \omega_3.
	\end{cases}
\end{equation}

\begin{lemma}\label{equiv}({\bf Equivalence.})
	{\it Assume that $C^1$ smooth vector functions $(n, {\bf u}, K)$ defined on a domain $\mathcal{Y}$ do not contain the vacuum (i.e. $n(y_1,y')>0$ in $\mathcal{Y}$) and the radial velocity $u_1(y_1,y')>0$ in $\mathcal{Y}$, then the following two statements are equivalent:
		\begin{enumerate}[(i)]
			\item $(n, {\bf u}, \mathcal{B})$ satisfy the steady Euler system \eqref{relativistic-euler-y} in $\mathcal{Y}$;
			\item $({\bf u}, K, \mathcal{B} )$ satisfy the equation \eqref{rel-deform-1},\eqref{rel-deform-2},\eqref{rel-deform-4} and \eqref{rel-deform-7}.
		\end{enumerate}
}\end{lemma}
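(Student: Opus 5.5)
The forward direction (i)$\Rightarrow$(ii) is essentially the derivation carried out above, and I will only organize it. Given a $C^1$ solution of \eqref{relativistic-euler-y} with $n>0$, $u_1>0$, I first recover the energy equation in the form of entropy transport. I contract the momentum equations with ${\bf u}$ and combine the result with the continuity equation and the Bernoulli transport (the last line of \eqref{relativistic-euler-y}). Using \eqref{rho}, \eqref{eq:internal-energy} and \eqref{p}, this gives ${\bf u}\cdot\nabla K(S)=0$; dividing by $u_1>0$ yields \eqref{rel-deform-1}, and \eqref{rel-deform-2} follows in the same way.

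Next I express $\nabla p$ through $\nabla\mathcal B$, $\nabla u^2$ and $\nabla K$. Since $n=n(\mathcal B,u^2,S)$ by \eqref{rel-deform-5}, $p=Kn^\gamma$ is a function of $(\mathcal B,u^2,K)$. Substituting this into the tangential momentum equations and using the Lamb-type identity $({\bf u}\cdot\nabla){\bf u}=\nabla(u^2/2)-{\bf u}\times\operatorname{curl}{\bf u}$, written in the frame $\widehat{\bf e}_i$ via \eqref{y2}--\eqref{y4}, gives the two displayed relations preceding \eqref{rel-deform-4}. Solving these for $\omega_2,\omega_3$ with $u_1\neq0$ gives \eqref{rel-deform-4}. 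The last three equations of \eqref{rel-deform-7} are simply the definition of $\omega_i$. The first equation of \eqref{rel-deform-7} comes from substituting \eqref{rel-deform-5} into the continuity equation and eliminating $\nabla\mathcal B$ and $\nabla K$ by the transport equations, with $c_s^2$ given by \eqref{rel-sound-speed-B}.

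For the converse (ii)$\Rightarrow$(i), I define $n$ by \eqref{rel-deform-5}, which is positive because $n>0$ is assumed, and set $p=Kn^\gamma$. The Bernoulli equation in \eqref{relativistic-euler-y} is then exactly \eqref{rel-deform-2} multiplied by $u_1$. Reversing the algebra of the first step, the first line of \eqref{rel-deform-7} together with \eqref{rel-deform-1}--\eqref{rel-deform-2} gives back the continuity equation. This step is reversible because it is a linear identity in the derivatives once the transport equations are used to eliminate ${\bf u}\cdot\nabla\mathcal B$ and ${\bf u}\cdot\nabla K$.

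For the momentum equations, I note that the three momentum components, written via the Lamb identity, are equivalent to
\[
{\bf u}\times\boldsymbol\omega=\text{(gradient terms in }\mathcal B,\ K,\ u^2).
\]
The $\widehat{\bf e}_2$ and $\widehat{\bf e}_3$ components are exactly \eqref{rel-deform-4}. The $\widehat{\bf e}_1$ component, the radial momentum equation, is not directly among the hypotheses. Instead I recover it from the other equations: the dot product of ${\bf u}$ with the full momentum vector equation is a combination of the Bernoulli and entropy transport equations, which hold by \eqref{rel-deform-1}--\eqref{rel-deform-2}. So writing $M=(M_1,M_2,M_3)$ for the momentum residuals, I have $M_2=M_3=0$ and $u_1M_1+u_2M_2+u_3M_3=0$. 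Since $u_1>0$, this forces $M_1=0$.

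The main obstacle I expect is the bookkeeping in the identity ${\bf u}\cdot M=$ (combination of transport equations) for the relativistic thermodynamics. I have to check that the coefficients produced by differentiating \eqref{rel-deform-5} match the factors $\frac{c^2-u^2}{\mathcal B}$ and $\frac{\sqrt{c^2-u^2}}{\gamma K\mathcal B}(\mathcal B\sqrt{c^2-u^2}-c^2)$ appearing in \eqref{rel-deform-4}. I will do this first by computing $\frac{c^2-u^2}{p+\rho c^2}\nabla p$ explicitly in terms of $\nabla\mathcal B$, $\nabla u^2$ and $\nabla K$. Note that \eqref{rel-deform-omega1} is not needed for the equivalence: it is a consequence of \eqref{rel-div-omega}, which holds automatically once the $\omega_i$ are defined by the curl.
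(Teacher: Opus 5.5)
Your proof is correct. The paper states this lemma without a separate proof, so your argument is essentially the intended one: the forward direction is the computation of Subsection~\ref{21}, and in the converse you recover continuity from the first line of \eqref{rel-deform-7} together with the transport equations, the tangential momentum equations from \eqref{rel-deform-4}, and the radial one from ${\bf u}\cdot M=0$ and $u_1>0$.

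The bookkeeping you flag does close. Set $G:=\frac{c^2-u^2}{\mathcal B}\nabla\mathcal B-\frac{\sqrt{c^2-u^2}}{\gamma K\mathcal B}(\mathcal B\sqrt{c^2-u^2}-c^2)\nabla K$; then $\frac{c^2-u^2}{p+\rho c^2}\nabla p=G-\frac12\nabla u^2$, so $M=-{\bf u}\times\boldsymbol\omega+G$ carries exactly the factors in \eqref{rel-deform-4}, and ${\bf u}\cdot M={\bf u}\cdot G=0$ by \eqref{rel-deform-1}--\eqref{rel-deform-2}.
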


Define 
\begin{equation*}
	\begin{aligned}
		& w_1(y_1, y')= u_1(y_1, y')- \bar{u}^+(y_1),\quad w_j(y_1, y') = u_j(y_1,y'), j=2,3,\\
		& w_4(y_1, y')=K(y_1, y')-\bar{K}^+,\quad w_5(y_1, y')= \mathcal{B} (y_1,y')-\bar{\mathcal{B} }^+,\\
		& w_6(y')= \xi(y')-r_s,\ \ \ {\bf w}(y_1, y')=(w_1,\cdots, w_5)(y_1, y').
	\end{aligned}
\end{equation*}
Then the density and the pressure become
\begin{equation}\label{n-p-w}
	\begin{aligned}
		n & = n(\mathbf{w}) = \left( \frac{\gamma - 1}{\gamma (\bar{K}^+ + w_4)} \right)^{\frac{1}{\gamma - 1}} 
		\Bigg( (\bar{\mathcal{B} }^+ + w_5) \sqrt{c^2 - (\bar{u}^+ + w_1)^2 - w_2^2 - w_3^2} - c^2 \Bigg)^{\frac{1}{\gamma - 1}} ,\\
		p & = p(\mathbf{w}) = \left( \frac{(\gamma - 1)^{\gamma}}{\gamma^{\gamma} (\bar{K}^+ + w_4)} \right)^{\frac{1}{\gamma - 1}} 
		\Bigg( (\bar{\mathcal{B} }^+ + w_5) \sqrt{c^2 - (\bar{u}^+ + w_1)^2 - w_2^2 - w_3^2} - c^2 \Bigg)^{\frac{\gamma}{\gamma - 1}}.
	\end{aligned}
\end{equation}

In terms of $w_1, w_2, \dots, w_5$, equations \eqref{rel-deform-1},\eqref{rel-deform-2}, \eqref{rel-deform-4}, and \eqref{rel-deform-omega1} take the following form. The hyperbolic quantities $w_4$ and $w_5$ satisfy
\begin{align}\label{rel-deform-w4}
	& \left( \partial_{y_1} + \frac{1 + |y'|^2}{2y_1 (\bar{u}^+ + w_1)} (w_2 \partial_{y_2} + w_3 \partial_{y_3}) \right) w_4 = 0,
	\\\label{rel-deform-w5}
	& \left( \partial_{y_1} + \frac{1 + |y'|^2}{2y_1 (\bar{u}^+ + w_1)} (w_2 \partial_{y_2} + w_3 \partial_{y_3}) \right) w_5 = 0.
\end{align}

The vorticity $\omega$ satisfies
\begin{equation}\label{rel-deform-omega1-w}
	\begin{aligned}
		&\partial_{y_1} \omega_1 + \frac{1 + |y'|^2}{2y_1 (\bar{u}^+ + w_1)} \sum_{i=2}^{3} w_i \partial_{y_i} \omega_1 
		+ \left[ \frac{1 + |y'|^2}{2y_1} \sum_{i=2}^{3} \partial_{y_i} \left( \frac{w_i}{\bar{u}^+ + w_1} \right) + \frac{2}{y_1} - \frac{y_2 w_2 + y_3 w_3}{y_1 (\bar{u}^+ + w_1)} \right] \omega_1 \\
		&\qquad + \frac{(1 + |y'|^2)^2}{4y_1^2} \Bigg[ 
		\partial_{y_2} \left( \frac{c^2 - (\bar{u}^+ + w_1)^2 - \sum_{i = 2}^3 w_i^2}{(\bar{u}^+ + w_1)(\bar{\mathcal{B} }^+ + w_5)} \right) \partial_{y_3} w_5 \\
		& \qquad
		- \partial_{y_3} \left( \frac{c^2 - (\bar{u}^+ + w_1)^2 - \sum_{i = 2}^3 w_i^2}{(\bar{u}^+ + w_1)(\bar{\mathcal{B} }^+ + w_5)} \right) \partial_{y_2} w_5 \\
		&\qquad - \partial_{y_2} \left( \frac{\sqrt{c^2 - (\bar{u}^+ + w_1)^2 - \sum_{i = 2}^3 w_i^2}}{\gamma (\bar{K}^+ + w_4) (\bar{u}^+ + w_1)(\bar{\mathcal{B} }^+ + w_5)} \right) \partial_{y_3} w_4 \\
		&\qquad + \partial_{y_3} \left( \frac{\sqrt{c^2 - (\bar{u}^+ + w_1)^2 - \sum_{i = 2}^3 w_i^2}}{\gamma (\bar{K}^+ + w_4) (\bar{u}^+ + w_1)(\bar{\mathcal{B} }^+ + w_5)} \iota (\mathbf{w}) \right) \partial_{y_2} w_4 \Bigg] = 0,
	\end{aligned}
\end{equation}
and 
\begin{equation}\label{rel-deform-4-w}
	\begin{aligned}
		\omega_2 &= \frac{w_2}{\bar{u}^+ + w_1} \omega_1 
		+ \frac{1 + |y'|^2}{2y_1 (\bar{u}^+ + w_1)} \Bigg( 
		\frac{c^2 - (\bar{u}^+ + w_1)^2 - \sum\nolimits_{i= 2}^3 w_i^2}{\bar{\mathcal{B} }^+ + w_5} \partial_{y_3} w_5 \\
		&\qquad - \frac{\sqrt{c^2 - (\bar{u}^+ + w_1)^2 - \sum\nolimits_{i= 2}^3 w_i^2}}{\gamma (\bar{K}^+ + w_4)(\bar{\mathcal{B} }^+ + w_5)}
		\iota (\mathbf{w}) \partial_{y_3} w_4 \Bigg), \\
		\omega_3 &= \frac{w_3}{\bar{u}^+ + w_1} \omega_1 
		+ \frac{1 + |y'|^2}{2y_1 (\bar{u}^+ + w_1)} \Bigg(
		\frac{c^2 - (\bar{u}^+ + w_1)^2 - \sum\nolimits_{i= 2}^3 w_i^2}{\bar{\mathcal{B} }^+ + w_5} \partial_{y_2} w_5 \\
		&\qquad - \frac{\sqrt{c^2 - (\bar{u}^+ + w_1)^2 - \sum\nolimits_{i= 2}^3 w_i^2}}{\gamma (\bar{K}^+ + w_4)(\bar{\mathcal{B} }^+ + w_5)}
		\iota (\mathbf{w}) \partial_{y_2} w_4 \Bigg),
	\end{aligned}
\end{equation}
with $\iota (\mathbf{w}) = (\bar{\mathcal{B} }^+ + w_5)\sqrt{c^2 - (\bar{u}^+ + w_1)^2 - \sum\nolimits_{i = 2}^3 w_i^2} - c^2$.

We now derive the equation satisfied by the velocity field. Since
\begin{align*}
	& c^2 \left(\bar{c}_s^2 (\bar{\mathcal{B}}^+, (\bar{u}^+)^2) - (\bar{u}^+)^2 \right) (\bar{u}^+)' (y_1) + \frac{2 \bar{c}_s^2 (\bar{\mathcal{B}}^+, (\bar{u}^+)^2) \left(c^2 - (\bar{u}^+)^2 \right) \bar{u}^+(y_1)}{y_1} = 0, \\
	& c_s^2 (\mathcal{B}, u^2) - u^2 - \bar{c}_s^2 (\bar{\mathcal{B}}^+, (\bar{u}^+)^2) + (\bar{u}^+)^2 \\ 
	& = - \left( 1 + \frac{(\gamma-1) c^4}{2 \bar{\mathcal{B}}^+ (c^2 - (\bar{u}^+)^2)^{3/2}} \right) 2 \bar{u}^+ w_1
	+ \frac{(\gamma-1) c^4}{(\bar{\mathcal{B}}^+)^2 \sqrt{c^2 - (\bar{u}^+)^2}} w_5 + \mathcal{R}_1,
\end{align*}
where
\begin{align*}
	\mathcal{R}_1 = & c_s^2 (\mathcal{B}, u^2) - \bar{c}_s^2 (\bar{\mathcal{B}}^+, (\bar{u}^+)^2) 
	- \left( 1 + \frac{(\gamma-1) c^4}{2 \bar{\mathcal{B}}^+ (c^2 - (\bar{u}^+)^2)^{3/2}} \right) \sum_{i = 1}^3 w_i^2 \\
	& 
	- (\gamma-1) c^4
	\left[
	\frac{w_5}{(\bar{\mathcal{B}}^+)^2 \sqrt{c^2 - (\bar{u}^+)^2}}
	- \frac{\bar{u}^+ w_1}{\bar{\mathcal{B}}^+ (c^2 - (\bar{u}^+)^2)^{3/2}}
	\right]
\end{align*} is the high-order term and 
\begin{equation*}
	\bar{c}_s^2 (\bar{\mathcal{B}}^+, (\bar{u}^+)^2) = (\gamma - 1) c^2 \left( 1 - \frac{ c^2}{\bar{\mathcal{B}}^+ \sqrt{c^2 - (\bar{u}^+)^2}} \right).
\end{equation*}
Then, if follows from the first equation of \eqref{rel-deform-7} that
\begin{equation}\label{rel-deform-w1}
	d_1(y_1) \partial_{y_1} w_1 + \frac{1 + |y'|^2}{2y_1} \sum_{j=2}^{3} \partial_{y_j} w_j 
	+ \frac{2w_1}{y_1} - \frac{1}{y_1} \sum_{j=2}^{3} y_j w_j + d_2(y_1) w_1 
	=d_0(y_1) w_5 + \mathcal{F}(\mathbf{w}),
\end{equation}
where
\begin{align*}
	& d_1(y_1) = 1 - \bar{\mathcal{M}}^2(y_1), \qquad \bar{\mathcal{M}}^2(y_1) = \frac{(\bar{u}^+(y_1))^2}{c^2 - (\bar{u}^+(y_1))^2} \cdot \frac{c^2 - \bar{c}_s^2(y_1)}{\bar{c}_s^2(y_1)}, \\
	& d_2(y_1) = \frac{
		2 c^2}{y_1(1-\bar{\mathcal{M}}^2)} \left( \frac{2 \bar{\mathcal{M}}^2}{c^2 -  (\bar{u}^+)^2} + \frac{(\gamma - 1) c^4  (\bar{u}^+)^4}{\bar{\mathcal{B}}^+ \bar{c}_s^4 (c^2 - (\bar{u}^+)^2)^{5/2}} \right), \\
	& d_0(y_1) = - \frac{((\gamma - 1)c^2 -\bar{c}_s^2 )^2 \sqrt{c^2 - (\bar{u}^+)^2}}{(\gamma - 1) c^4} \left(c^2 (\bar{u}^+)' + \frac{2 (c^2 - (\bar{u}^+)^2) \bar{u}^+}{y_1} \right),\\
	& \bar{c}_s^2 (c^2 - (\bar{u}^+)^2) \mathcal{F}(\mathbf{w}) =  
	\left( 1 + \frac{(\gamma - 1)c^4}{2 \bar{\mathcal{B}}^+ (c^2 - (\bar{u}^+)^2)^{3/2}} \right) 2 \bar{u}^+ c^2 w_1 \partial_{y_1} w_1 
	- \frac{(\gamma - 1) c^6}{(\bar{\mathcal{B}}^+)^2 \sqrt{c^2 - (\bar{u}^+)^2}} w_5 \partial_{y_1} w_1  \\
	&  \quad \quad - \left( c^2 \mathcal{R}_1 -  c_s^2 ( w_2^2 + w_3^2) \right)(\partial_{y_1} w_1 + (\bar{u}^+)' )- (c_s^2 - \bar{c}_s^2) (\bar{u}^+ + w_1) \sum\nolimits_{i = 2}^3 w_i \partial_{y_1} w_i \\
	&  \quad \quad  - \frac{1 + |y'|^2}{2y_1} \left((c_s^2 - \bar{c}_s^2) (c^2 - (\bar{u}^+)^2) - \bar{c}_s^2 (2 \bar{u}^+ + w_1^2 + w_3^2) - c^2 w_2^2  \right) \partial_{y_2} w_2 \\
	& \quad \quad   - \frac{1 + |y'|^2}{2y_1} \left((c_s^2 - \bar{c}_s^2) (c^2 - (\bar{u}^+)^2) - \bar{c}_s^2 (2 \bar{u}^+ + w_1^2 + w_2^2) - c^2 w_3^2  \right) \partial_{y_3} w_3 \\
	& \quad \quad  + \left( \left( 2 \bar{c}_s^2 + \frac{(\gamma - 1)c^4}{\bar{\mathcal{B}}^+ \sqrt{c^2 - (\bar{u}^+)^2}} \right) \frac{\bar{u}^+ w_1}{y_1} - \frac{(\gamma - 1)c^4 \sqrt{c^2 - (\bar{u}^+)^2}}{y_1 (\bar{\mathcal{B}}^+)^2} w_5  \right) (2 w_1 - y_2 w_2 - y_3 w_3)\\
	& \quad \quad
	- \left((c_s^2 - \bar{c}_s^2) 2\bar{u}^+  w_1 + c_s^2 \sum\nolimits_{i= 1}^3 w_i^2 - (c^2 - (\bar{u}^+)^2) (\mathcal{R}_1 + \sum\nolimits_{i= 1}^3 w_i^2) \sum\nolimits_{i= 1}^3 w_i^2  \right) \frac{ 2 \bar{u}^+ + 2 w_1 - y_2 w_2 - y_3 w_3}{y_1} \\
	& \quad \quad
	- \frac{1 + |y'|^2}{2y_1} (c_s^2 - \bar{c}_s^2) \left( w_2 \left( (\bar{u}^+ + w_1)\partial_{y_2} w_1 + w_3 \partial_{y_2} w_3 \right) + w_3 \left( (\bar{u}^+ + w_1)\partial_{y_3} w_1 + w_2 \partial_{y_3} w_2 \right)\right).
\end{align*}

\subsection{The reformulation of the Rankine-Hugoniot conditions and boundary conditions}\label{22}

It follows from the third and fourth equations in \eqref{RH} that 
\begin{equation}\label{RH-1}
	\partial_{y_2}\xi (y') = \frac{2\xi(y')}{1 + |y'|^2} \cdot \frac{ \mathcal{N}_2(\xi, y') }{ \mathcal{N}(\xi, y') }, \quad
	\partial_{y_3}\xi (y') = \frac{2\xi(y')}{1 + |y'|^2} \cdot \frac{ \mathcal{N}_3(\xi, y') }{ \mathcal{N}(\xi, y') },
\end{equation}
where
\begin{align*}
	\mathcal{N}(\xi, y') &= \left[ \dfrac{(p + \rho c^2) u_2^2}{c^2 - u^2} + p \right] \left[ \dfrac{(p + \rho c^2) u_3^2}{c^2 - u^2} + p \right] - \left(\left[ \dfrac{(p + \rho c^2) u_2 u_3}{c^2 - u^2} \right]\right)^2, \\
	\mathcal{N}_2(\xi, y') &= \left[ \dfrac{(p + \rho c^2) u_1 u_2}{c^2 - u^2} \right] \left[ \dfrac{(p + \rho c^2) u_3^2}{c^2 - u^2} + p \right] - \left[ \dfrac{(p + \rho c^2) u_1 u_3}{c^2 - u^2} \right] \left[ \dfrac{(p + \rho c^2) u_2 u_3}{c^2 - u^2} \right], \\
	\mathcal{N}_3(\xi, y') &= \left[ \dfrac{(p + \rho c^2) u_1 u_3}{c^2 - u^2} \right] \left[ \dfrac{(p + \rho c^2) u_2^2}{c^2 - u^2} + p \right] - \left[ \dfrac{(p + \rho c^2) u_1 u_2}{c^2 - u^2} \right] \left[ \dfrac{(p + \rho c^2) u_2 u_3}{c^2 - u^2} \right].
\end{align*}

We can rewrite \eqref{RH-1} that 
\begin{equation}\label{RH-2}
	\begin{aligned}
		\partial_{y_2}\xi (y') & = \frac{2}{1 + |y'|^2} \left(r_s \alpha_0 w_2 (\xi(y'), y') + r_s g_2 \right) , \\
		\partial_{y_3}\xi (y') & = \frac{2}{1 + |y'|^2} \left(r_s \alpha_0 w_3 (\xi(y'), y') + r_s g_3 \right),
	\end{aligned}
\end{equation}
where $\alpha_0 = \frac{(\bar{p}^+(r_s) + \bar{\rho}^+(r_s) c^2) \bar{u}^+ (r_s)}{(c^2 - (\bar{u}^+ (r_s))^2 ) [\bar{p}(r_s)]} > 0 $, and 
\begin{equation*}
	g_i = \frac{1}{r_s} \left((r_s + w_6 (y')) \frac{\mathcal{N}_i}{\mathcal{N}} (r_s + w_6 (y'), y') - \alpha_0 r_s w_i (\xi(y'), y') \right), \quad i = 2, 3.
\end{equation*}
Moreover, the functions $g_i$ $(i = 2,3)$ are the error terms and are bounded by 
\begin{equation}\label{g-bdd}
	|g_i| \leq C_1 \left(
	|\mathbf{w} (\xi, y')|^2 + |w_6(y')|^2 \right), \quad i = 2, 3.
\end{equation}

It follows from \eqref{RH} and \eqref{RH-1} that
\begin{equation}\label{RH-3} \begin{cases} \left[ \dfrac{n u_1}{\sqrt{c^2 - u^2}} \right] = \dfrac{1}{\mathcal{N}} \displaystyle\sum_{i=2}^3 \mathcal{N}_i \left[ \dfrac{n u_i}{\sqrt{c^2 - u^2}} \right], \\
		\left[ \dfrac{(p + \rho c^2) u_1^2}{c^2 - u^2} + p \right] = \frac{\mathcal{B} u_1^+}{\mathcal{N}} \sum_{i=2}^3 \mathcal{N}_i \left[ \frac{n u_i}{\sqrt{c^2 - u^2}} \right] + \frac{\mathcal{B} [u_1]}{\mathcal{N}} \sum_{i=2}^3 \mathcal{N}_i \left( \frac{n u_i}{\sqrt{c^2 - u^2}} \right)^-, \\ 
		[\mathcal{B}] = 0. \end{cases} \end{equation}

We observe that
\begin{equation*}
	\left[ \frac{\bar{n} \bar{u}}{\sqrt{c^2 - \bar{u}^2}} \right](r_s + w_6) = O (w_6^2), \quad
	\left[ \frac{(\bar{p} + \bar{\rho} c^2) \bar{u}^2}{c^2 - \bar{u}^2} + \bar{p} \right] (r_s + w_6) = \frac{2}{r_s} [\bar{p}] (r_s) w_6 + O(w_6^2).
\end{equation*}
Then, one can use the Taylor's expansion and \eqref{RH-3} to obtain that 
\begin{equation}\label{RH-4}
	\begin{cases}
		a_{11} w_1 + a_{12} w_4 = R_{01} (
		\mathbf{w}, w_6),  \\
		a_{21} w_1 + a_{22} w_4 = - \frac{2}{r_s} [\bar{p}] (r_s) w_6 + R_{02} (
		\mathbf{w}, w_6), 
	\end{cases}
\end{equation}
where
\begin{equation*}
	\begin{aligned}
		& a_{11} = \frac{\bar{n}^+(r_s)}{\sqrt{c^2 - (\bar{u}^+(r_s))^2}} (1 - \bar{\mathcal{M}}^2(r_s)), \\ 
		& a_{12} = - \frac{(\bar{n}^+ \bar{u}^+)(r_s)}{(\gamma-1) \bar{K}^+ \sqrt{c^2 - (\bar{u}^+(r_s))^2}}, \\
		& a_{21} = \frac{(\bar{p}^+ + \bar{\rho}^+ c^2)(r_s) \bar{u}^+(r_s)}{c^2 - (\bar{u}^+(r_s))^2} (1 - \bar{\mathcal{M}}^2(r_s)), \\
		& a_{22} = - \frac{(\bar{p}^+ + \bar{\rho}^+ c^2)(r_s) (\bar{u}^+(r_s))^2}{(\gamma-1) \bar{K}^+ (c^2 - (\bar{u}^+(r_s))^2)} - \dfrac{(\bar{n}^+(r_s))^\gamma}{\gamma-1},
	\end{aligned}
\end{equation*}
and 
\begin{align*}
	R_{01} = & \dfrac{1}{\mathcal{N}} \displaystyle\sum_{i=2}^3 \mathcal{N}_i \left[ \dfrac{n u_i}{\sqrt{c^2 - u^2}} \right] - \frac{n(\mathbf{w}) (\bar{u}^+ + w_1)}{\sqrt{c^2 - (\bar{u}^+ + w_1)^2 - w_2^2 - w_3^2}} + \frac{n^- u_1^-}{\sqrt{c^2 - (u^-)^2}} (\xi, y') \\
	& 
	- \frac{\bar{n}^- \bar{u}^-}{\sqrt{c^2 - (\bar{u}^-)^2}}(\xi) - \left[ \frac{\bar{n} \bar{u}}{\sqrt{c^2 - \bar{u}^2}} \right](\xi) + \frac{\bar{n}^+ \bar{u}^+}{\sqrt{c^2 - (\bar{u}^+)^2}} (\xi) + a_{11} w_1 + a_{12} w_4, \\
	R_{02} = &  \frac{\mathcal{B} u_1^+}{\mathcal{N}} \sum_{i=2}^3 \mathcal{N}_i \left[ \frac{n u_i}{\sqrt{c^2 - u^2}} \right] + \frac{\mathcal{B} [u_1]}{\mathcal{N}} \sum_{i=2}^3 \mathcal{N}_i \left( \frac{n u_i}{\sqrt{c^2 - u^2}} \right)^- \\
	& 
	- \frac{(p(\mathbf{w}) + \rho(\mathbf{w}) c^2) (\bar{u}^+ + w_1)^2}{c^2 - (\bar{u}^+ + w_1)^2 - w_2^2 - w_3^2} - p (\mathbf{w}) + \left(\frac{(p^- + \rho^- c^2) (u^-)^2}{c^2 - (u^-)^2} + p^- \right) (\xi, y') \\
	&
	- \left(\frac{(\bar{p}^- + \bar{\rho}^- c^2) (\bar{u}^-)^2}{c^2 - (\bar{u}^-)^2} + \bar{p}^- \right)(\xi) - \left( \left[ \frac{(\bar{p} + \bar{\rho} c^2) \bar{u}^2}{c^2 - \bar{u}^2} + \bar{p} \right] (\xi) - \frac{2}{r_s} [\bar{p}] (r_s) w_6 \right) \\
	& + \left(\frac{(\bar{p}^+ + \bar{\rho}^+ c^2) (\bar{u}^+)^2}{c^2 - (\bar{u}^+)^2} + \bar{p}^+ \right)(\xi) + a_{21} w_1 + a_{22} w_4.
\end{align*}
Solving \eqref{RH-4} yields 
\begin{equation}\label{RH-5}
	\begin{cases}
		w_1 = a_1 w_6 + R_1(
		\mathbf{w}, w_6) ,  \\
		w_4 = a_2 w_6 + R_2 (
		\mathbf{w}, w_6), \\
		w_5 = \mathcal{B}^- (r_s + w_6, y') - \bar{\mathcal{B}}^-, 
	\end{cases}
\end{equation}
with
\begin{equation*}
	\begin{aligned}
		a_1 & = \frac{2 \bar{u}^+ (r_s) [\bar{p}(r_s)]}{r_s \bar{p}^+ (r_s) (1 - \bar{\mathcal{M}}^2(r_s))} > 0, &
		a_2 & = \frac{2 (\gamma - 1) [\bar{p}(r_s)]}{r_s (\bar{n}^+ (r_s))^{\gamma}} > 0, \\
		R_1 & = \frac{a_{22} R_{01} - a_{12} R_{02}}{a_{11} a_{22} - a_{21} a_{12}} \eqqcolon \sum_{i = 1}^2 b_{1i} R_{0i}, &
		R_2 & = \frac{- a_{21} R_{01} + a_{11} R_{02}}{a_{11} a_{22} - a_{21} a_{12}} \eqqcolon \sum_{i = 1}^2 b_{2i} R_{0i}.
	\end{aligned}
\end{equation*}
One can prove that $R_i$ $(i = 1,2)$ are bounded by
\begin{equation}
	|R_{i}| \leq C_* \left(
	|\mathbf{w} (\xi, y')|^2 + |w_6(y')|^2 \right), \quad i = 1,2,
\end{equation}
where $C_* > 0$ depends only on the background solutions.

It follows from \eqref{n-p-w} and the Taylor's expansion that
\begin{equation}\label{p_ex}
	\begin{aligned} 
		\epsilon p_{ex} (y') = & 
		-  \frac{ \bar{n}^+(r_2) \bar{u}^+(r_2) \bar{\mathcal{B}}^+}{\sqrt{c^2 - (\bar{u}^+(r_2))^2}} w_1 (r_2, y') - \frac{\bar{p}^+ (r_2)}{\gamma \bar{K}^+}  w_4 (r_2, y') \\
		& 
		+ \bar{n}^+ (r_2) \sqrt{c^2 - (\bar{u}^+ (r_2))^2} w_5(r_2, y') + E (\mathbf{w}(r_2, y')),
\end{aligned}\end{equation}
where
\begin{equation}\label{E}\begin{aligned} 
		& E (\mathbf{w}(r_2, y')) =  p (\mathbf{w}) - \bar{p}^+ (r_2) - \bar{n}^+ (r_2) \sqrt{c^2 - (\bar{u}^+ (r_2))^2} w_5(r_2, y') \\
		& \quad + \frac{ \bar{\mathcal{B}}^+ \bar{n}^+(r_2) \bar{u}^+(r_2)}{\sqrt{c^2 - (\bar{u}^+(r_2))^2}} w_1 (r_2, y') + \frac{\bar{p}^+ (r_2)}{(\gamma - 1) \bar{K}^+ }  w_4 (r_2, y') ,
\end{aligned}\end{equation}
and the error term $E$ is bounded by 
\begin{equation}\label{E-bdd}
	|E(\mathbf{w}(r_2, y'))| \leq C_* |\mathbf{w}(r_2, y')|^2.
\end{equation}
This, together with \eqref{p-perturbation}, implies that
\begin{equation}\label{r2-bdry}\begin{aligned}
		& \frac{ \bar{\mathcal{B}}^+}{\sqrt{c^2 - (\bar{u}^+(r_2))^2}} w_1 (r_2, y') + \frac{\bar{\mathcal{B}}^+ \sqrt{c^2 - (\bar{u}^+ (r_2))^2} - c^2}{\gamma \bar{K}^+ \bar{u}^+ (r_2)}  w_4 (r_2, y')  \\
		& = - \frac{\epsilon p_{ex} (y')}{\bar{n}^+(r_2) \bar{u}^+(r_2)} + \frac{\sqrt{c^2 - (\bar{u}^+ (r_2))^2} }{\bar{u}^+(r_2)} w_5(r_2, y') + \frac{E(\mathbf{w}(r_2, y'))}{\bar{n}^+(r_2) \bar{u}^+(r_2)}.
\end{aligned}\end{equation}

The boundary condition on $|y'| = 1$ is
\begin{equation}\label{wall-bdry-2}
	(y_2 w_2 + y_3 w_3) (y_1, y') = 0, \quad \forall y_1 \in [r_s + w_6 (y'), r_2], \quad |y'| = 1.
\end{equation}

Therefore, solving the system of equations \eqref{relativistic-euler-y} with boundary conditions \eqref{RH}, \eqref{inlet-per}, \eqref{wall-bdry} and \eqref{p-perturbation} is equivalent to seeking a scalar function $w_6$ defined on $E$ and a vector-valued function $\mathbf{w}$ defined on $\mathcal{Y}_6 \coloneqq \{(y_1, y')\mid r_s + w_6 (y') < y_1 < r_2, \, y' \in E \}$ that satisfy \eqref{rel-deform-w4}-\eqref{rel-deform-w1}, along with the boundary conditions \eqref{RH-2}, \eqref{RH-5}, \eqref{r2-bdry} and \eqref{wall-bdry-2}.

\subsection{Transform to a fixed boundary value problem}\label{23}
Considering that the shock front is a free boundary, we introduce the coordinate transformation
\begin{equation}\label{trans-z}
	z_1 = \frac{y_1 - \xi (y')}{r_2 - \xi (y')} (r_2 - r_s) + r_s, \quad z' = (z_2, z_3) = (y_2, y_3),
\end{equation}
to map it onto a fixed boundary. Under this transformation, we have
\begin{equation}\label{tran-z-2}
	\begin{cases} 
		y_1 = z_1 + \frac{r_2 - z_1}{r_2 - r_s} v_6(z') \eqqcolon D_0^{v_6}, \\
		\partial_{y_1} = \frac{r_2 - r_s}{r_2 - r_s - v_6} \partial_{z_1} \eqqcolon D_1^{v_6}, \\
		\partial_{y_2} = \partial_{z_2} - \frac{r_2 - z_1}{r_2 - r_s - v_6} \partial_{z_2} v_6 \partial_{z_1} \eqqcolon D_2^{v_6},\\
		\partial_{y_3} = \partial_{z_3} - \frac{r_2 - z_1}{r_2 - r_s - v_6} \partial_{z_3} v_6 \partial_{z_1} \eqqcolon D_3^{v_6}, 
	\end{cases}
\end{equation}
where we denote $v_6(z') = \xi (z') - r_s$. The domain $ \{(y_1, y')\mid r_s + w_6 (y') < y_1 < r_2, \, y' \in E \}$ becomes 
\begin{equation*}
	\mathcal{D} \coloneqq \{(z_1, z')\mid r_s  < z_1 < r_2, \, z' \in E \} \quad E \coloneqq  \{ z'\mid |z'| < 1 \}.
\end{equation*}
Denote
\begin{equation*}
	\begin{aligned}
		\Gamma_s & = \{(z_1, z_2, z_3) \mid z_1 = r_s, \, z' \in E \}, \\
		\Gamma_2 & = \{(z_1, z_2, z_3) \mid z_1 = r_2, \, z' \in E \},\\
		\Gamma_0 & = \{(z_1, z_2, z_3) \mid z_1 \in [r_s,r_2], \, |z'| = 1 \}.
	\end{aligned}
\end{equation*}
Let 
\begin{equation*}
	\begin{aligned}
		v_j (z) & = w_j (z_1 + \frac{r_2 - z_1}{r_2 - r_s} v_6(z'), z'), \, j = 1,2,3,4,5, \quad \mathbf{v} = (v_1, v_2, \cdots, v_5),\\	
		\tilde{\omega}_j (z) & = \omega_j (z_1 + \frac{r_2 - z_1}{r_2 - r_s} v_6(z'), z'), \, j = 1, 2,3.
	\end{aligned}
\end{equation*}
In the new coordinate system, the density $n $ and pressure $p $ in \eqref{n-p-w} are expressed as
\begin{equation}\label{n-p-z}
	\begin{aligned}
		\tilde{n} (\mathbf{v}, v_6) & = \left[ \frac{\gamma - 1}{\gamma (\bar{K}^+ + v_4)} \right]^{\frac{1}{\gamma - 1}} \left[ (\bar{\mathcal{B}}^+ + v_5) \sqrt{c^2 - (\bar{u}^+(D_0^{v_6}) + v_1)^2 - v_2^2 - v_3^2} - c^2 \right]^{\frac{1}{\gamma - 1}},\\
		\tilde{p} (\mathbf{v}, v_6) & = \left[ \frac{(\gamma - 1)^{\gamma}}{\gamma^{\gamma} (\bar{K}^+ + v_4)} \right]^{\frac{1}{\gamma - 1}} \left[ (\bar{\mathcal{B}}^+ + v_5) \sqrt{c^2 - (\bar{u}^+(D_0^{v_6}) + v_1)^2 - v_2^2 - v_3^2} - c^2 \right]^{\frac{\gamma}{\gamma - 1}}.
	\end{aligned}
\end{equation}
The boundary conditions \eqref{RH-2} on $z_1 = r_s$ are transformed into
\begin{equation}\label{RH-6} \begin{aligned}
		\partial_{z_2} v_6(z') &= \frac{2}{1+|z'|^2} \left( r_s \alpha_0 v_2(r_s, z') + r_s g_2(\mathbf{v}(r_s, z'), v_6 (z')) \right), \\
		\partial_{z_3} v_6(z') &= \frac{2}{1+|z'|^2} \left( r_s \alpha_0 v_3(r_s, z') + r_s g_3(\mathbf{v}(r_s, z'), v_6 (z')) \right),
\end{aligned} \end{equation}
with 
\begin{equation} 
	g_i = \frac{1}{r_s} \left(  \frac{ (r_s + v_6(z')) \mathcal{N}_i (\mathbf{v}(r_s, z'), v_6 (z'))}{\mathcal{N}(\mathbf{v}(r_s, z'), v_6 (z'))}  - \alpha_0 r_s v_i (r_s, z') \right), \quad i = 2, 3.
\end{equation}

It follows from the first equation of \eqref{RH-5} that
\begin{equation}\label{RH-7}
	v_6(z') = \frac{1}{a_1} \left( v_1(r_s, z') - R_1\left( \mathbf{v}(r_s, z'), v_6(z') \right) \right) ,
\end{equation}
where $R_1 \left( \mathbf{v}(r_s, z'), v_6(z') \right)\coloneqq \sum_{i = 1}^2 b_{1i} R_{0i}\left( \mathbf{v}(r_s, z'), v_6(z') \right)$.

The boundary value problems for the entropy and the Bernoulli quantity in the $z-$coordinate system are derived from \eqref{rel-deform-w4}, \eqref{rel-deform-w5} and \eqref{RH-5} as follows
\begin{equation}\label{v4}
	\begin{cases}
		\left( D_1^{v_6} + \frac{1 + |z'|^2}{2 D_0^{v_6} (\bar{u}^+ ( D_0^{v_6}) + v_1)} \sum_{i=2}^3 v_i D_i^{v_6} \right) v_4 = 0, \\
		v_4 (r_s, z') = a_2 v_6 (z') + R_2(\mathbf{v} (r_s, z'), v_6 (z')),
	\end{cases}
\end{equation}
and
\begin{equation}\label{v5}
	\begin{cases}
		\left( D_1^{v_6} + \frac{1 + |z'|^2}{2 D_0^{v_6} (\bar{u}^+ ( D_0^{v_6}) + v_1)} \sum_{i=2}^3 v_i D_i^{v_6} \right) v_5 = 0, \\
		v_5 (r_s, z') = \mathcal{B}^- (r_s + v_6 (z'), z') - \bar{\mathcal{B}}^-,
	\end{cases}
\end{equation}
where $R_2(\mathbf{v} (r_s, z'), v_6 (z')) \coloneqq \sum_{i = 1}^2 b_{2i} R_{0i}(\mathbf{v} (r_s, z'), v_6 (z')) $.

Furthermore, the vorticity equation \eqref{rel-deform-omega1-w} is written as
\begin{equation}\label{omega-1}
	D_1^{v_6} \tilde{\omega}_1 + \frac{1 + |z'|^2}{2 D_0^{v_6} (\bar{u}^+ ( D_0^{v_6}) + v_1)} \sum_{i=2}^{3} v_i D_i^{v_6} \tilde{\omega}_1 
	+ \mu (\mathbf{v}, v_6) \tilde{\omega}_1 = J (\mathbf{v}, v_6),
\end{equation}
where
\begin{equation*}
	\begin{aligned}
		\mu (\mathbf{v}, v_6) = & \frac{1 + |z'|^2}{2 D_0^{v_6}} \sum_{i=2}^{3} D_i^{v_6} \left( \frac{v_i}{\bar{u}^+(D_0^{v_6}) + v_1} \right) + \frac{2}{D_0^{v_6}} - \frac{z_2 v_2 + z_3 v_3}{D_0^{v_6} (\bar{u}^+(D_0^{v_6}) + v_1)}, \\
		J (\mathbf{v}, v_6) = & - \frac{(1 + |z'|^2)^2}{4 (D_0^{v_6})^2} \Bigg[ 
		D_2^{v_6} \left( \frac{c^2 - (\bar{u}^+(D_0^{v_6}) + v_1)^2 - \sum_{i = 2}^3 v_i^2}{(\bar{u}^+(D_0^{v_6}) + v_1)(\bar{\mathcal{B}}^+ + v_5)} \right) D_3^{v_6} v_5 \\
		& 
		- D_3^{v_6} \left( \frac{c^2 - (\bar{u}^+(D_0^{v_6}) + v_1)^2 - \sum_{i = 2}^3 v_i^2}{(\bar{u}^+(D_0^{v_6}) + v_1)(\bar{\mathcal{B}}^+ + v_5)} \right) D_2^{v_6} v_5 \\
		& - D_2^{v_6} \left( \frac{\sqrt{c^2 - (\bar{u}^+(D_0^{v_6}) + v_1)^2 - \sum_{i = 2}^3 v_i^2}}{\gamma (\bar{K}^+ + v_4) (\bar{u}^+(D_0^{v_6}) + v_1)(\bar{\mathcal{B}}^+ + v_5)} \iota (\mathbf{v}) \right) D_3^{v_6} v_4 \\
		& + D_3^{v_6} \left( \frac{\sqrt{c^2 - (\bar{u}^+(D_0^{v_6}) + v_1)^2 - \sum_{i = 2}^3 v_i^2}}{\gamma (\bar{K}^+ + v_4) (\bar{u}^+(D_0^{v_6}) + v_1)(\bar{\mathcal{B}}^+ + v_5)} \iota (\mathbf{v}) \right) D_2^{v_6} v_4 \Bigg], \\
		\iota(\mathbf{v}) = & (\bar{\mathcal{B}}^+ + v_5)\sqrt{c^2 - (\bar{u}^+(D_0^{v_6}) + v_1)^2 - \sum\nolimits_{i = 2}^3 v_i^2} - c^2.
	\end{aligned}
\end{equation*}
In addition, the vorticity \eqref{rel-deform-4-w} takes the form of 
\begin{equation}\label{omega-23}
	\begin{aligned}
		\tilde{\omega}_2 & = \frac{1+|z'|^2}{2 D_0^{v_6}} D_3^{v_6} v_1- D_1^{v_6} v_3-\frac{v_3}{D_0^{v_6}} \\
		&= \frac{v_2}{\bar{u}^+(D_0^{v_6}) + v_1} \tilde{\omega}_1 
		+ \frac{1 + |z'|^2}{2 D_0^{v_6} (\bar{u}^+(D_0^{v_6}) + v_1)} \Bigg( \frac{c^2 - (\bar{u}^+(D_0^{v_6}) + v_1)^2 - \sum\nolimits_{i= 2}^3 v_i^2}{\bar{\mathcal{B}}^+ + v_5}  D_3^{v_6} v_5 \\
		&\qquad - \frac{\sqrt{c^2 - (\bar{u}^+(D_0^{v_6}) + v_1)^2 - \sum\nolimits_{i= 2}^3 v_i^2}}{\gamma (\bar{K}^+ + v_4)(\bar{\mathcal{B}}^+ + v_5)}
		\iota (\mathbf{v})  D_3^{v_6} v_4 \Bigg), \\
		\tilde{\omega}_3 & = D_1^{v_6} v_2+ \frac{v_2}{D_0^{v_6}}-\frac{1+|z'|^2}{2 D_0^{v_6}} D_2^{v_6} v_1 \\
		&= \frac{v_3}{\bar{u}^+(D_0^{v_6}) + v_1} \tilde{\omega}_1 
		- \frac{1 + |z'|^2}{2 D_0^{v_6} (\bar{u}^+(D_0^{v_6}) + v_1)} \Bigg(\frac{c^2 - (\bar{u}^+(D_0^{v_6}) + v_1)^2 - \sum\nolimits_{i= 2}^3 v_i^2}{\bar{\mathcal{B}}^+ + v_5}  D_2^{v_6} v_5 \\
		&\qquad - \frac{\sqrt{c^2 - (\bar{u}^+(D_0^{v_6}) + v_1)^2 - \sum\nolimits_{i= 2}^3 v_i^2}}{\gamma (\bar{K}^+ + v_4)(\bar{\mathcal{B}}^+ + v_5)}
		\iota (\mathbf{v})  D_2^{v_6} v_4 \Bigg).
	\end{aligned}
\end{equation}

Separating the principal part from the quadratic remainders, we arrive at
\begin{equation}\label{omega}
	\begin{aligned}
		& \frac{1 + |z'|^2}{2 z_1} (\partial_{z_2} v_3 - \partial_{z_3} v_2) + \frac{1}{z_1} (z_3 v_2 - z_2 v_3) = \tilde{\omega}_1 + H_1 (\mathbf{v}, v_6), \\
		& \frac{1 + |z'|^2}{2 z_1} \partial_{z_3} \left( v_1 + \frac{\bar{\mathcal{B}}^+ (c^2 - (\bar{u}^+)^2) - c^2 \sqrt{c^2 - (\bar{u}^+)^2}}{\gamma \bar{K}^+ \bar{\mathcal{B}}^+\bar{u}^+}  v_4 \right) - \partial_{z_1} v_3 - \frac{v_3}{z_1}  \\
		& \quad 
		=  \frac{1 + |z'|^2}{2 D_0^{v_6} (\bar{u}^+ (D_0^{v_6}) + v_1)} \frac{c^2 - (\bar{u}^+ (D_0^{v_6}) + v_1)^2 - \sum\nolimits_{i= 2}^3 v_i^2}{\bar{\mathcal{B}}^+ + v_5}  D_3^{v_6} v_5 \\
		& \qquad
		+ \frac{v_2}{\bar{u}^+ (D_0^{v_6}) + v_1} \tilde{\omega}_1 + H_2 (\mathbf{v}, v_6), \\
		& \partial_{z_1} v_2 + \frac{v_2}{z_1} - \frac{1 + |z'|^2}{2 z_1} \partial_{z_2} \left( v_1 +  \frac{\bar{\mathcal{B}}^+ (c^2 - (\bar{u}^+)^2) - c^2 \sqrt{c^2 - (\bar{u}^+)^2}}{\gamma \bar{K}^+ \bar{\mathcal{B}}^+ \bar{u}^+}  v_4 \right) \\
		& \quad 
		= - \frac{1 + |z'|^2}{2 D_0^{v_6} (\bar{u}^+ (D_0^{v_6}) + v_1)} \frac{c^2 - (\bar{u}^+ (D_0^{v_6}) + v_1)^2 - \sum\nolimits_{i= 2}^3 v_i^2}{\bar{\mathcal{B}}^+ + v_5}  D_2^{v_6} v_5 \\
		& \qquad + \frac{v_3}{\bar{u}^+ (D_0^{v_6}) + v_1} \tilde{\omega}_1 + H_3  (\mathbf{v}, v_6),
	\end{aligned}
\end{equation}
where
\begin{align*}
	H_1 (\mathbf{v}, v_6) = & - \frac{1 + |z'|^2}{2 } \left( \left(\frac{D_2^{v_6}}{D_0^{v_6}} - \frac{\partial_{z_2}}{z_1} \right) v_3 - \left(\frac{D_3^{v_6}}{D_0^{v_6}} - \frac{\partial_{z_3}}{z_1} \right) v_2 \right) + \frac{D_0^{v_6} - z_1}{z_1 D_0^{v_6}} (z_3 v_2 - z_2 v_3), \\
	H_2 (\mathbf{v}, v_6) = &  
	- \frac{1 + |z'|^2}{2 }\left(\frac{D_3^{v_6}}{D_0^{v_6}} - \frac{\partial_{z_3}}{z_1} \right) v_1 + (D_1^{v_6} - \partial_{z_1}) v_3 - \frac{D_0^{v_6} - z_1}{z_1 D_0^{v_6}} v_3  \\
	&
	- \frac{1 + |z'|^2}{2 D_0^{v_6} (\bar{u}^+ (D_0^{v_6}) + v_1)} \frac{\sqrt{c^2 - (\bar{u}^+ (D_0^{v_6}) + v_1)^2 - \sum\nolimits_{i= 2}^3 v_i^2}}{\gamma (\bar{K}^+ + v_4)(\bar{\mathcal{B}}^+ + v_5)} \iota (\mathbf{v})  D_3^{v_6} v_4 \\
	&
	+ \frac{1 + |z'|^2}{2 z_1} \frac{\bar{\mathcal{B}}^+ (c^2 - (\bar{u}^+)^2) - c^2 \sqrt{c^2 - (\bar{u}^+)^2}}{\gamma \bar{K}^+ \bar{\mathcal{B}}^+ \bar{u}^+} \partial_{z_3} v_4, \\
	H_3  (\mathbf{v}, v_6) = & 
	- (D_1^{v_6} - \partial_{z_1}) v_2 + \frac{D_0^{v_6} - z_1}{z_1 D_0^{v_6}} v_2 + \frac{1 + |z'|^2}{2} \left(\frac{D_2^{v_6}}{D_0^{v_6}} -\frac{\partial_{z_2}}{z_1} \right) v_1 \\
	&
	+ \frac{1 + |z'|^2}{2 D_0^{v_6} (\bar{u}^+ (D_0^{v_6}) + v_1)} \frac{\sqrt{c^2 - (\bar{u}^+ (D_0^{v_6}) + v_1)^2 - \sum\nolimits_{i= 2}^3 v_i^2}}{\gamma (\bar{K}^+ + v_4)(\bar{\mathcal{B}}^+ + v_5)} \iota (\mathbf{v})  D_2^{v_6} v_4 \\
	&
	- \frac{1 + |z'|^2}{2 z_1}  \frac{\bar{\mathcal{B}}^+ (c^2 - (\bar{u}^+)^2) - c^2 \sqrt{c^2 - (\bar{u}^+)^2}}{\gamma \bar{K}^+ \bar{\mathcal{B}}^+ \bar{u}^+} \partial_{z_2} v_4 .
\end{align*}

Next, we analyze the boundary conditions of vorticity at $z_1 = r_s$. The following reformulation of the jump conditions \eqref{RH-6} is crucial for solving the transonic shock problem. Note that \eqref{RH-6} is equivalent to that on $E$ there holds
\begin{equation}\label{RH-8}\begin{cases}
		F_2(z')\coloneqq\partial_{z_2} v_6- \frac{2 \alpha_0 r_s v_2(r_s, z')}{1+|z'|^2} -\frac{2}{1+|z'|^2}g_2({\bf v}(r_s,y'),v_6(z'))\equiv 0,\\
		F_3(z')\coloneqq\partial_{z_3} v_6- \frac{2 \alpha_0 r_s v_3(r_s, z')}{1+|z'|^2}-\frac{2}{1+|z'|^2}g_3({\bf v}(r_s,y'),v_6(z'))\equiv 0.
\end{cases}\end{equation}
To match the deformation-curl system for the velocity field, we reformulate \eqref{RH-8} by using the following equivalence lemma.
\begin{lemma}\label{equi0}\cite[Lemma 2.3]{Weng25}
	{\it Let $F_j \, ( j=2,3)$ be two $C^1$ smooth functions defined on $\overline{E}$. Then the following two statements are equivalent
		\begin{enumerate}[(i)]
			\item $F_2=F_3\equiv 0$ on $\overline{E}$;
			\item $F_2$ and $F_3$ solve the following problem
			\begin{equation}\label{equi00}\begin{cases}
					\partial_{z_2}F_3-\partial_{z_3} F_2=0,\ \ &\text{in}\ E,\\
					\partial_{z_2} F_2 + \partial_{z_3} F_3=0,\ \ &\text{in}\ E,\\
					z_2 F_2+z_3 F_3=0,\ \ &\text{on}\ \ z_2^2+z_3^2=1.
			\end{cases}\end{equation}
		\end{enumerate}
}\end{lemma}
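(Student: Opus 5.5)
The direction (i)$\Rightarrow$(ii) is immediate: if $F_2=F_3\equiv 0$ on $\overline{E}$, then all derivatives vanish in $E$ and the boundary combination $z_2F_2+z_3F_3$ vanishes on $z_2^2+z_3^2=1$. So the work lies in (ii)$\Rightarrow$(i). My plan is to show that the two-dimensional vector field $\mathbf F=(F_2,F_3)$ is curl-free, divergence-free and tangential on $\partial E$, and then to prove by an energy identity that such a field on the disk must vanish.

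First I use the curl equation $\partial_{z_2}F_3-\partial_{z_3}F_2=0$. Since $E$ is simply connected (a disk), it produces a potential. Concretely, define
\[
\phi(z')=\int_0^1 \big(z_2F_2(tz')+z_3F_3(tz')\big)\,dt, \qquad z'\in\overline{E},
\]
which is well defined on $\overline{E}$ because the disk is star-shaped with respect to the origin. By the curl-free condition, the standard Poincaré lemma computation gives $\nabla_{z'}\phi=\mathbf F$ in $E$. Because $F_j\in C^1(\overline{E})$, we get $\phi\in C^2(\overline{E})$, with derivatives extending continuously to the boundary.

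Next, the divergence equation becomes $\Delta\phi=\partial_{z_2}F_2+\partial_{z_3}F_3=0$ in $E$. On the unit circle the outward normal is $\nu=z'$, so the boundary condition reads $\partial_\nu\phi=z_2F_2+z_3F_3=0$. Thus $\phi$ solves the homogeneous Neumann problem for the Laplacian on the disk.

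Finally, I apply Green's identity, which is legitimate since $\phi\in C^2(\overline{E})$:
\[
\int_E |\nabla\phi|^2\,dz' = \int_{\partial E}\phi\,\partial_\nu\phi\,ds-\int_E\phi\,\Delta\phi\,dz'=0 .
\]
Hence $\nabla\phi\equiv0$, that is $F_2=F_3\equiv0$ in $E$, and by continuity also on $\overline{E}$.

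The only point needing care is regularity up to the boundary, which is needed to justify the integration by parts. I would handle it by noting that $\mathbf F\in C^1(\overline{E})$ makes the explicit radial-integral potential $C^2$ up to $\partial E$. If one prefers not to rely on that, Green's identity can be applied on the disks $\{|z'|<1-\delta\}$ and then $\delta\to0$ using the uniform continuity of $\mathbf F$. Either way this is routine, so I do not expect a genuine obstacle.
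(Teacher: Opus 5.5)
Your proof is correct. The curl-free condition on the star-shaped disk gives a potential $\phi\in C^2(\overline{E})$ that is harmonic with homogeneous Neumann data, so Green's identity forces $\nabla\phi=(F_2,F_3)\equiv 0$. The paper gives no proof of its own here and only cites \cite[Lemma 2.3]{Weng25}; your argument is the standard potential-plus-Neumann-uniqueness argument one would expect behind that citation, though the paper itself does not let me confirm this.
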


Combining the first equation in \eqref{equi00} with \eqref{RH-8}, we obtain
\begin{equation}\label{bdry-1}
	\begin{aligned}
		& \frac{1 + |z'|^2}{2 r_s} \left(\partial_{z_2} v_3 - \partial_{z_3} v_2 \right) (r_s, z') + \frac{1}{r_s} \left(z_3 v_2 - z_2 v_3 \right) (r_s, z') \\
		& = \frac{(1 + |z'|^2)^2}{2 \alpha_0 r_s^2} \left(\partial_{z_3} \left( \frac{g_2({\bf v}(r_s,y'),v_6(z'))}{1 + |z'|^2}\right)  - \partial_{z_2} \left( \frac{g_3({\bf v}(r_s,y'),v_6(z'))}{1 + |z'|^2}\right)\right).
	\end{aligned} 
\end{equation}
Moreover, one has 
\begin{equation}\label{bdry-omega1}
	\begin{aligned}
		\tilde{\omega}_1 (r_s, z') = & \frac{(1 + |z'|^2)^2}{2 \alpha_0 r_s^2} \left(\partial_{z_3} \left( \frac{g_2({\bf v}(r_s,y'),v_6(z'))}{1 + |z'|^2}\right)  - \partial_{z_2} \left( \frac{g_3({\bf v}(r_s,z'),v_6(z'))}{1 + |z'|^2}\right)\right)  \\
		& + f_1 ({\bf v}(r_s,z'),v_6(z')),
	\end{aligned}
\end{equation}
where
\begin{equation*}\begin{aligned}
		f_1 ({\bf v}(r_s,z'),v_6(z'))= & \frac{v_6 }{r_s (r_s + v_6)} (z_2 v_3 - z_3 v_2) - \frac{1+ |z'|^2}{2} \frac{v_6}{r_s (r_s + v_6)} (\partial_{z_2} v_3 - \partial_{z_3} v_2) \\
		& 
		+ \frac{1+ |z'|^2}{2} \frac{v_6}{r_s (r_s + v_6)} \frac{r_2 - r_s}{r_2 - r_s - v_6 } (\partial_{z_2} v_6 \partial_{z_1} v_3 - \partial_{z_3} v_6 \partial_{z_1} v_2).
\end{aligned}\end{equation*}

Combining the second equation in \eqref{equi00} with \eqref{RH-8}, we obtain
\begin{equation}
	(\partial_{z_2}^2 + \partial_{z_3}^2 ) v_6 (z') - 2 \alpha_0 r_s \sum_{i = 2}^{3} \partial_{z_i} \left(\frac{v_i (r_s, z')}{1 + |z'|^2} \right) = 2 \sum_{i = 2}^3 \partial_{z_i} \left( \frac{g_i ({\bf v}(r_s,z'),v_6(z'))}{1 + |z'|^2}\right).
\end{equation}
Substituting \eqref{RH-7} into the above gives
\begin{equation}\label{bdry-2}
	(\partial_{z_2}^2 + \partial_{z_3}^2) v_1 (r_s, z') - 2 \alpha_0 a_1 r_s \sum_{i = 2}^{3} \partial_{z_i} \left(\frac{v_i (r_s, z')}{1 + |z'|^2} \right) = q_1({\bf v}(r_s,z'),v_6(z')) ,
\end{equation}
with 
\begin{equation*}
	q_1 ({\bf v}(r_s,y'),v_6(z')) = 2 \sum_{i = 2}^3 \partial_{z_i} \left( \frac{g_i ({\bf v}(r_s,z'),v_6(z'))}{1 + |z'|^2}\right) + \sum_{i = 2}^3 \partial_{z_i}^2 R_1 ({\bf v}(r_s,z'),v_6(z')).
\end{equation*}

Substituting \eqref{RH-7} into the boundary condition of \eqref{equi00} yields
\begin{equation}\label{bdry-3}
	\begin{aligned}
		& \sum_{i = 2}^3 z_i \partial_{z_i} v_1 (r_s, z') - \alpha_0 a_1 r_s \sum_{i = 2}^3 z_i v_i (r_s, z') \\
		& = \sum_{i = 2}^3 z_i \partial_{z_i} R_1 ({\bf v}(r_s,z'),v_6(z')) + \alpha_1  \sum_{i = 2}^3 z_i \partial_{z_i} g_i ({\bf v}(r_s,z'),v_6(z')), \quad \text{on } |z'| = 1.
\end{aligned}\end{equation}

The boundary condition on the wall is 
\begin{equation}\label{wall-bdry-3}
	(z_2 v_2 + z_3 v_3) (z_1, z') = 0, \quad \forall z_1 \in [r_s , r_2], \quad |z'| = 1.
\end{equation}

It follows from \eqref{r2-bdry} that the boundary condition at exit is 
\begin{equation}\label{bdry-4}\begin{aligned} 
		&  v_1 (r_2, z') + \frac{\bar{\mathcal{B}}^+ \left(c^2 - (\bar{u}^+ (r_2))^2 \right) - c^2 \sqrt{c^2 - (\bar{u}^+ (r_2))^2}}{\gamma \bar{K}^+ \bar{\mathcal{B}}^+ \bar{u}^+ (r_2)}  v_4 (r_2, z')   \\
		& = - \frac{\sqrt{c^2 - (\bar{u}^+(r_2))^2}}{\bar{\mathcal{B}}^+ \bar{n}^+(r_2)  \bar{u}^+(r_2)} \epsilon p_{ex} (z') + \frac{c^2 - (\bar{u}^+ (r_2))^2 }{\bar{\mathcal{B}}^+ \bar{u}^+(r_2)}v_5(r_2, z') + \frac{\sqrt{c^2 - (\bar{u}^+(r_2))^2}}{\bar{\mathcal{B}}^+ \bar{n}^+(r_2)  \bar{u}^+(r_2)}E(\mathbf{v}(r_2, z')),
\end{aligned}\end{equation}
with 
\begin{equation*}\begin{aligned} 
		& E (\mathbf{v}(r_2, z')) =  \tilde{p} (\mathbf{v} (r_2, z')) - \bar{p}^+ (r_2) - \bar{n}^+ (r_2) \sqrt{c^2 - (\bar{u}^+ (r_2))^2} v_5(r_2, z') \\
		& \quad + \bar{n}^+(r_2) \bar{u}^+(r_2) \left(\frac{ \bar{\mathcal{B}}^+}{\sqrt{c^2 - (\bar{u}^+(r_2))^2}} v_1 (r_2, z') + \frac{\bar{\mathcal{B}}^+ \sqrt{c^2 - (\bar{u}^+ (r_2))^2} - c^2}{\gamma \bar{K}^+ \bar{u}^+ (r_2)}  v_4 (r_2, z') \right).
\end{aligned}\end{equation*}

It follows from \eqref{rel-deform-w1} that
\begin{equation}\label{rel-deform-v1}
	d_1 (z_1) \partial_{z_1} v_1 + \frac{1 + |z'|^2}{2 z_1} \sum_{j = 2}^3 \partial_{z_j} v_j + \frac{2 v_1}{z_1} - \frac{1}{z_1} \sum_{j = 2}^{3} z_j v_j + d_2 (z_1) v_1 = d_0 (z_1) v_5 + H (\mathbf{v}, v_6),
\end{equation}
where
\begin{align*}
	H (\mathbf{v}, v_6) = & \mathcal{F}  (\mathbf{v}, v_6) - ( d_1 (D_0^{v_6}) D_1^{v_6} v_1 - d_1 (z_1) \partial_{z_1} v_1  ) - \frac{1 + |z'|^2}2 \sum_{j = 2}^3 \bigg(\frac{1}{D_0^{v_6}} D_j^{v_6} - \frac{1}{z_j} \partial_{z_j} \bigg) v_j \\
	& 
	- \bigg(\frac{1}{D_0^{v_6}} - \frac{1}{z_1} \bigg) \bigg( 2 v_1 - \sum_{j = 2}^3 z_j v_j \bigg) - ( d_2 (D_0^{v_6}) - d_2 (z_1) ) v_1,
\end{align*}
and 
\begin{align*}
	& \bar{c}_s^2 (c^2 - (\bar{u}^+ (D_0^{v_6}))^2)  \mathcal{F} (\mathbf{v}, v_6) = 
	\left( 1 + \frac{(\gamma - 1)c^4}{2 \bar{\mathcal{B}}^+ (c^2 - (\bar{u}^+(D_0^{v_6}))^2)^{3/2}} \right) 2 \bar{u}^+(D_0^{v_6}) c^2 v_1 D_1^{v_6} v_1 \\
	&\qquad 
	- \frac{(\gamma - 1) c^6 }{(\bar{\mathcal{B}}^+)^2 \sqrt{c^2 - (\bar{u}^+(D_0^{v_6}))^2}} v_5 D_1^{v_6} v_1  
	- \left( c^2 R_1 -  c_s^2 ( v_2^2 + v_3^2) \right)(D_1^{v_6} v_1 + (\bar{u}^+(D_0^{v_6}))' ) \\
	& \qquad
	- \frac{1 + |z'|^2}{2D_0^{v_6}} \left((c_s^2 - \bar{c}_s^2) (c^2 - (\bar{u}^+(D_0^{v_6}))^2) - \bar{c}_s^2 (2 \bar{u}^+(D_0^{v_6}) + v_1^2 + v_3^2) - c^2 v_2^2  \right) D_2^{v_6} v_2 \\
	& \qquad
	- \frac{1 + |z'|^2}{2D_0^{v_6}} \left((c_s^2 - \bar{c}_s^2) (c^2 - (\bar{u}^+(D_0^{v_6}))^2) - \bar{c}_s^2 (2 \bar{u}^+(D_0^{v_6}) + v_1^2 + v_2^2) - c^2 v_3^2  \right) D_3^{v_6} v_3 \\
	&\qquad 
	+ \left( \left( 2 \bar{c}_s^2 + \frac{(\gamma - 1)c^4}{\bar{\mathcal{B}}^+ \sqrt{c^2 - (\bar{u}^+(D_0^{v_6}))^2}} \right) \frac{\bar{u}^+(D_0^{v_6}) v_1}{D_0^{v_6}} - \frac{(\gamma - 1)c^4 \sqrt{c^2 - (\bar{u}^+(D_0^{v_6}))^2}}{D_0^{v_6} (\bar{\mathcal{B}}^+)^2} v_5  \right) \\
	& \qquad\quad 
	\times (2 v_1 - z_2 v_2 - z_3 v_3)\\
	&\qquad - \left((c_s^2 - \bar{c}_s^2) 2\bar{u}^+(D_0^{v_6})  v_1 + c_s^2 \sum\nolimits_{i= 1}^3 v_i^2 - (c^2 - (\bar{u}^+(D_0^{v_6}))^2) (\mathcal{R}_1 + \sum\nolimits_{i= 1}^3 v_i^2) \sum\nolimits_{i= 1}^3 v_i^2  \right) \\
	& \qquad\quad
	\times\frac{ 2 \bar{u}^+(D_0^{v_6}) + 2 v_1 - z_2 v_2 - z_3 v_3}{D_0^{v_6}} 
	- (c_s^2 - \bar{c}_s^2) (\bar{u}^+(D_0^{v_6}) + v_1) \sum\nolimits_{i = 2}^3 v_i D_1^{v_6} v_i\\
	&\qquad 
	- \frac{1 + |z'|^2}{2D_0^{v_6}} (c_s^2 - \bar{c}_s^2) \Big( v_2 \left( (\bar{u}^+(D_0^{v_6}) + v_1)D_2^{v_6} v_1 + v_3 D_2^{v_6} v_3 \right) \\
	& \qquad + v_3 \left( (\bar{u}^+(D_0^{v_6}) + v_1)D_3^{v_6} v_1 + v_2 D_3^{v_6} v_2 \right)\Big).
\end{align*}

To simplify the description of compatibility conditions on the shell boundary, we introduce polar coordinates
\begin{equation}\label{polar}
	y_2 = a \cos \tau, \quad y_3 = a \sin \tau,
\end{equation}
and define
\begin{equation}\label{u-a-tau}
	\begin{cases}
		u_a(y_1, a, \tau) \coloneqq u_2(y_1, a\cos\tau, a\sin\tau) \cos\tau + u_3(y_1, a\cos\tau, a\sin\tau) \sin\tau, \\
		u_{\tau} (y_1, a, \tau)\coloneqq -u_2(y_1, a \cos \tau, a \sin \tau) \sin \tau + u_3(y_1, a \cos \tau, a \sin \tau) \cos \tau,\\
		(u_1, \rho, p, K, \mathcal{B}) (y_1, a, \tau) \coloneqq (u_1, \rho, p, K, \mathcal{B}) (y_1, a\cos\tau, a\sin\tau).
	\end{cases}
\end{equation}

Under the coordinate $(y_1, a, \tau)$, the system \eqref{relativistic-euler-y} becomes
\begin{equation}\label{relativistic-euler-polar}
	\begin{cases}
		\partial_{y_1}\left( \frac{n u_1}{\sqrt{c^2 - u^2}} \right)
		+ \frac{1 + a^2}{2y_1} \left[ \partial_a \left( \frac{n u_a}{\sqrt{c^2 - u^2}} \right) 
		+ \frac{1}{a} \partial_\tau \left( \frac{n u_\tau}{\sqrt{c^2 - u^2}} \right) \right]  + \frac{2}{y_1} \frac{n u_1}{\sqrt{c^2 - u^2}}
		- \frac{a}{y_1} \frac{n u_a}{\sqrt{c^2 - u^2}} = 0, \\
		\left[ u_1 \partial_{y_1} 
		+ \frac{1 + a^2}{2y_1} \left( u_a \partial_a + \frac{u_\tau}{a} \partial_\tau \right) \right] u_1 
		- \frac{u_a^2 + u_\tau^2}{y_1}
		+ \frac{c^2 - u^2}{p + \rho c^2} \partial_{y_1} p = 0, \\
		\left[ u_1 \partial_{y_1} 
		+ \frac{1 + a^2}{2y_1} \left( u_a \partial_a + \frac{u_\tau}{a} \partial_\tau \right) \right] u_a
		+ \frac{u_1 u_a}{y_1} 
		- \frac{1 - a^2 }{2 a y_1 }  u_\tau^2 + \frac{c^2 - u^2}{p + \rho c^2} \frac{1 + a^2}{2y_1} \partial_a p = 0, \\
		\left[ u_1 \partial_{y_1} 
		+ \frac{1 + a^2}{2y_1} \left( u_a \partial_a + \frac{u_\tau}{a} \partial_\tau \right) \right] u_\tau
		+ \frac{u_1 u_\tau}{y_1} 
		+ \frac{1 - a^2 }{2 a y_1} u_\tau u_a  + \frac{c^2 - u^2}{p + \rho c^2} \frac{1 + a^2}{2y_1 a} \partial_\tau p = 0, \\
		\left[ u_1 \partial_{y_1} 
		+ \frac{1 + a^2}{2y_1} \left( u_a \partial_a + \frac{u_\tau}{a} \partial_\tau \right) \right] \mathcal{B} = 0.
	\end{cases}
\end{equation}

Therefore, after introducing the coordinate transformation \eqref{trans-z}, solving \eqref{relativistic-euler-y} with the boundary conditions \eqref{RH}, \eqref{inlet-per}, \eqref{wall-bdry} and \eqref{p-perturbation} is equivalent to solving the following problem.

\textbf{Problem P. } Find a scalar function $v_6$ defined on $E$ and a vector function $(v_1, v_2, \ldots, v_5)$ defined on $\mathcal{D}$ such that they satisfy the equations \eqref{v4}, \eqref{v5}, \eqref{omega} and \eqref{rel-deform-v1}, as well as the boundary conditions \eqref{RH-7}, \eqref{bdry-omega1}, \eqref{bdry-2}, \eqref{wall-bdry-3} and \eqref{bdry-4}.

Theorem \ref{existence} follows directly from the following theorem.

\begin{theorem}\label{main-2}
	Assume the compatibility condition \eqref{com-p} holds. There exists a small constant $\epsilon_0 > 0$, depending only on the background solution and the boundary data $p_{ex} \in C^{2, \alpha} (\bar{E})$, such that if $0 \leq \epsilon < \epsilon_0$, the problem \eqref{v4}, \eqref{v5}, \eqref{omega} and \eqref{rel-deform-v1}, together with the boundary conditions \eqref{RH-7}, \eqref{bdry-omega1}, \eqref{bdry-2}, \eqref{wall-bdry-3} and \eqref{bdry-4}, admits a unique solution $(v_1, v_2, v_3, v_4, v_5) (z)$ with the shock front $\mathcal{S}: z_1 = v_6(z')$ satisfying the following properties:
	\begin{enumerate}
		\item 
		The function $v_6 (z') \in C^{3, \alpha} (\bar{E})$ satisfies 
		\begin{equation}\label{est-1}
			\| v_6 (z') - r_s \|_{C^{3, \alpha} (\bar{E})} \leq C_* \epsilon,
		\end{equation}
		and 
		\begin{equation}\label{com-v6}
			\partial_a v_6 (1, \tau) = 0, \quad \forall \, \tau \in \mathbb{T}_{2\pi},
		\end{equation} 
		where $C_*$ is a positive constant depending only on the background solution, the supersonic incoming flow and the exit pressure.
		\item 
		The solution $(v_1, v_2, v_3, v_4, v_5) (z) \in (C^{2, \alpha}(\bar{\mathcal{D}}))^5$ satisfies
		\begin{equation}\label{est-2}
			\sum_{i = 1}^5 \| v_i \|_{C^{2, \alpha}(\bar{\mathcal{D}})} \leq C_* \epsilon,
		\end{equation}
		and the compatibility conditions
		\begin{equation}\label{com-v12345}
			\begin{cases}
				v_a (z_1, 1, \tau) = \left( \partial_a^2 v_a + \partial_a v_a \right)(z_1, 1, \tau) = 0,  & \forall \, (z_1, \tau) \in [r_s, r_2] \times \mathbb{T}_{2\pi}, \\
				\partial_a (v_1, v_{\tau}, v_4, v_5) (z_1, 1, \tau) = 0, & \forall \, (z_1, \tau) \in [r_s, r_2] \times \mathbb{T}_{2\pi}.
			\end{cases}
		\end{equation}
		Here we also ues the polar coordinate $z_2 = a \cos \tau$, $z_3 = a \sin \tau$, and the functions $v_a$, $v_{\tau}$ are defined as in \eqref{u-a-tau}.
	\end{enumerate}
\end{theorem}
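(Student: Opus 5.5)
We prove Theorem \ref{main-2} by a contraction mapping argument in a weighted function space built around the linearized Problem P, following the strategy of \cite{Weng25,WX26}. For $\delta>0$ small, we work in the set $\Xi_\delta$ of $(\hat{\mathbf v},\hat v_6)$ with
\[
\sum_{i=1}^5\|\hat v_i\|_{C^{2,\alpha}(\overline{\mathcal D})}+\|\hat v_6\|_{C^{3,\alpha}(\overline E)}\le\delta,
\]
subject to the compatibility conditions \eqref{com-v6}, \eqref{com-v12345}. Given such a state, we freeze all nonlinear terms $g_i,R_1,R_2,f_1,q_1,H,H_1,H_2,H_3,J,\mu,E$ at $(\hat{\mathbf v},\hat v_6)$ and solve a linear problem in the order hyperbolic first, then elliptic. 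The map $\mathcal T:(\hat{\mathbf v},\hat v_6)\mapsto(\mathbf v,v_6)$ is then shown to send $\Xi_\delta$ into itself for $\delta=C\epsilon$ and to contract in a weaker norm, say $C^{1,\alpha}$. A contraction in the weaker norm together with the closed uniform bound in $C^{2,\alpha}$ gives a unique fixed point, and Lemmas \ref{equiv} and \ref{equi0} then return the solution of the original problem.

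\textbf{Step 1 (transport part).} The shock-front data $\hat v_4(r_s,\cdot)$ and $\hat v_5(r_s,\cdot)$ are determined by \eqref{v4}--\eqref{v5}. Since $\mathcal B^-$ is constant for the unperturbed inlet data \eqref{inlet-per}, we expect $v_5\equiv0$. The entropy $v_4$ is transported along the characteristics of the frozen field, whose radial component stays positive because $\bar u^+>0$. The frozen field is tangent to $\Gamma_0$ thanks to \eqref{wall-bdry-3}, so characteristics never exit through the wall, and $v_4$ is $C^{2,\alpha}$ with $\partial_a v_4=0$ on $a=1$. Next, \eqref{omega-1} with boundary datum \eqref{bdry-omega1} gives $\tilde\omega_1$. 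Its datum contains second derivatives of $\hat{\mathbf v}$ through $\partial g_i$, so at this stage $\tilde\omega_1$ is only $C^{1,\alpha}$.

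\textbf{Step 2 (elliptic part).} We solve the linear deformation--curl system
\[
\eqref{rel-deform-v1}+\eqref{omega}\quad\text{with boundary conditions}\quad \eqref{bdry-2},\ \eqref{bdry-3},\ \eqref{wall-bdry-3},\ \eqref{bdry-4}.
\]
Following \cite{WengXin19,Weng25}, we eliminate $v_2,v_3$ using the curl equations to derive a second-order elliptic equation for $v_1$ (more precisely for $W=v_1+\kappa(z_1)v_4$). Its principal part is $d_1\partial_{z_1}^2+\frac{(1+|z'|^2)^2}{4z_1^2}\Delta_{z'}$, and $d_1>0$ because the downstream flow is subsonic. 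The boundary conditions are:
\begin{itemize}
\item on $\Gamma_s$, an oblique condition coming from \eqref{bdry-2}, obtained by substituting the divergence $\sum_j\partial_{z_j}(v_j/(1+|z'|^2))$ from the $z_1$-trace of \eqref{rel-deform-v1};
\item on $\Gamma_0$, a Neumann-type condition from \eqref{bdry-3} and \eqref{wall-bdry-3};
\item on $\Gamma_2$, a Dirichlet condition \eqref{bdry-4}.
\end{itemize}
The key point, as in \cite{LXY09,Weng25}, is that the obliqueness coefficient on $\Gamma_s$ has the right sign. This follows from $\alpha_0,a_1>0$ and $d_0,d_2$, so the problem is uniquely solvable with no restriction on the background (the Dirichlet exit excludes the kernel). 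Schauder estimates at edges use the compatibility conditions: on $\Gamma_0$ we reflect evenly in $a$ across $a=1$ in polar coordinates, as allowed by \eqref{com-p} and \eqref{com-v12345}. Then $v_2,v_3$ are recovered from a div--curl system in each $z'$-slice. Finally, $v_6$ is given by \eqref{RH-7}. Its $C^{3,\alpha}$ regularity comes from \eqref{RH-6}, since $\nabla v_6$ is expressed through $v_2,v_3\in C^{2,\alpha}$.

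\textbf{Step 3 (closing the estimates).} All nonlinear terms are quadratic, so $\|\mathcal T\|\le C(\epsilon+\delta^2)$, which gives self-mapping. The main obstacle is the derivative loss in the shock datum for $\tilde\omega_1$ and in $q_1$, both of which involve second derivatives of $g_i$ and $R_1$. We handle this as in \cite{Weng25}. The combination in \eqref{bdry-1} is a divergence/curl of $(g_2,g_3)$, so the datum enters the elliptic problem only in weak form, i.e.\ one derivative lower in the $v_2,v_3$ equations. We then check directly that the resulting $v_2,v_3$ still have full $C^{2,\alpha}$ regularity, using that $v_1$ gains a derivative from the elliptic estimate. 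If this fails, a fallback is to iterate with $v_6$ as an independent unknown, using \eqref{RH-6} to gain its extra derivative. Contraction is proved by estimating differences of two iterates in $C^{1,\alpha}$.
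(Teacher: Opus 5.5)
\textbf{There is a genuine gap: the linear coupling of entropy to the shock position.} It sits in your Step 1 and in the Step 3 claim that every frozen term is quadratic. By \eqref{v4}, the entropy datum on the shock is $v_4(r_s,z')=a_2v_6(z')+R_2$ with $a_2>0$. This is \emph{linear} in the shock displacement, not quadratic. By \eqref{RH-7}, $v_6=v_1(r_s,\cdot)/a_1$ up to quadratic terms.

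You transport $v_4$ before the elliptic solve, so its datum can only come from $\hat v_6$. Then $v_4\approx a_2\hat v_6$ enters \eqref{omega} (through $\partial_{z_j}(v_1+\kappa v_4)$) and the exit condition \eqref{bdry-4} at linear order. The chain $\hat v_6\mapsto v_4\mapsto v_1\mapsto v_6=v_1(r_s,\cdot)/a_1$ is therefore a linear map of size $O(1)$. What you actually get is $\|\mathcal T(\hat{\mathbf v},\hat v_6)\|\le C(\epsilon+\delta^2)+C_L\|\hat v_6\|$, not $C(\epsilon+\delta^2)$. Neither self-mapping nor contraction follows unless $C_L<1$, which is exactly a structural condition on the background that the theorem avoids.

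The paper instead expresses $v_4$ through the \emph{new} unknown, $v_4=\frac{a_2}{a_1}v_1(r_s,z')+R_4$ in \eqref{v4-exp-1}, with $R_4$ genuinely higher order. This turns \eqref{omega} and \eqref{bdry-4} into the nonlocal system \eqref{curl}, with terms $d_3(z_1)\partial_{z_j}v_1(r_s,z')$, and the nonlocal exit condition \eqref{bdry-5}. Your $W=v_1+\kappa v_4$ then equals $v_1+d_3(z_1)v_1(r_s,z')$, i.e.\ $\partial_{z_1}\psi$. The resulting problem \eqref{psi} carries the nonlocal term $-\frac{d_4(z_1)}{a_3}\partial_{z_1}\psi(r_s,z')$, and the exit condition becomes Neumann for $\psi$, not Dirichlet.

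On the shock, the condition is made genuinely oblique, with coefficient $a_4=\alpha_0a_1(1+d_3(r_s))>0$, by solving the Poisson--Neumann problem of Lemma \ref{oblique}. Substituting the trace of \eqref{rel-deform-v1} into \eqref{bdry-2}, as you propose, gives a second-order tangential (Venttsel-type) condition, not an oblique one. Unique solvability of this nonlocal problem, Proposition \ref{solvability}, where the positivity of $a_3,a_4,d_4$ enters, is where ``no restriction on the background'' is earned. The local oblique/Neumann/Dirichlet problem you describe is not the one that must be solved, and ``the Dirichlet exit excludes the kernel'' does not address uniqueness.

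\textbf{A second ingredient is missing: the linearized curl system is overdetermined.} After freezing, the curl sources $(\tilde H_1,\tilde H_2,\tilde H_3)$ need not be divergence-free in the sense of \eqref{rel-div-omega}. The three curl equations plus the deformation equation are then four equations for $(v_1,v_2,v_3)$, and in general have no solution. Eliminating $v_2,v_3$ and later ``recovering them from a div--curl system in each slice'' cannot satisfy all four at once. The paper enlarges the system with an auxiliary unknown $\Pi$ (see \eqref{deform-curl}) and determines it from \eqref{Pi}. It then splits $v=\dot v+\tilde v$ into a div--curl part and a potential part $\psi$, and only at the fixed point shows $\Pi\equiv0$ by the maximum principle.

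Finally, your derivative-loss concern and its fallback are unnecessary once $\hat v_6\in C^{3,\alpha}$ is built into the iteration space. Lemma \ref{oblique} needs $q_1$ only in $C^{\alpha}$, and the vorticity datum $R_5$ is already $C^{1,\alpha}$.
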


\section{The iteration scheme and the proof of Theorem~\ref{main-2}}\label{proof}
We now prove Theorem~\ref{main-2}. Define the set $\Xi$ to consist of vector-valued functions $(v_1, v_2, v_3, v_4, v_5, v_6) \in (C^{2,\alpha}(\bar{\mathcal{D}}))^5 \times C^{3, \alpha} (\bar{E})$ satisfying the compatibility conditions
\begin{equation}\label{com-Xi}
	\begin{cases}
		v_a (z_1, 1, \tau) = \left( \partial_a^2 v_a + \partial_a v_a \right)(z_1, 1, \tau) = 0,  & \forall \, (z_1, \tau) \in [r_s, r_2] \times \mathbb{T}_{2\pi}, \\
		\partial_a (v_1, v_{\tau}, v_4, v_5) (z_1, 1, \tau) = 0, & \forall \, (z_1, \tau) \in [r_s, r_2] \times \mathbb{T}_{2\pi},\\
		\partial_a v_6 (1, \tau) = 0, & \forall \, \tau \in \mathbb{T}_{2\pi},
	\end{cases}
\end{equation}
and the estimate
\begin{equation}\label{est-Xi}
	\|({\bf v},v_6)\|_{\Xi}\coloneqq\sum_{j=1}^5 \|v_j\|_{C^{2,\alpha}(\overline{\mathcal{D}})}+\|v_6\|_{C^{3,\alpha}(\overline{E})}\leq \sigma_0,
\end{equation}
with $\sigma_0$ being a small positive constant to be determined later.

For any element $(\hat{\mathbf{v}}, \hat{v}_6)$ in $\Xi$, we construct a contraction mapping $\mathcal{P}$ that maps it to itself. The fixed point of this mapping is then the solution to \textbf{Problem P}. We observe that the deformation-curl system \eqref{omega} and \eqref{rel-deform-v1} contains hyperbolic quantities $v_4$ and $v_5$, and thus remains a hyperbolic-elliptic mixed system. Solving the transport equations via the method of characteristics yields explicit expressions for $v_4$ and $v_5$.
Moreover, we note that the expressions for $v_4$ and $v_5$ involve the shock front $v_6$. In view of \eqref{RH-7}, the principal term of $v_4$ is a scalar multiple of $v_1(r_s, \cdot)$, while $v_5$ may be treated as a higher-order term. Substituting these into \eqref{omega} and \eqref{rel-deform-v1}, we obtain a first-order elliptic system for $(v_1, v_2, v_3)$, which can be uniquely solved. Furthermore, $v_4$ and $v_6$ are also uniquely determined. The iteration scheme is given as follows.

\textbf{(i) } The shock front can be uniquely determined by 
\begin{equation}\label{RH-9}
	v_6(z') = \frac{1}{a_1} \left( v_1(r_s, z') - R_1\left( \hat{\mathbf{v}}(r_s, z'), \hat{v}_6(z') \right) \right), 
\end{equation}
assuming $v_1 (r_s, z')$ is known. 

\textbf{(ii) } We solve for the hyperbolic quantities, namely the entropy $v_4$ and the Bernoulli quantity $v_5$.

The function $v_5$ satisfies
\begin{equation}\label{v5-iteration}
	\begin{cases}
		\left( D_1^{\hat{v}_6} + \frac{1 + |z'|^2}{2 D_0^{\hat{v}_6} (\bar{u}^+ ( D_0^{\hat{v}_6}) + \hat{v}_1)} \sum_{i=2}^3 \hat{v}_i D_i^{\hat{v}_6} \right) v_5 = 0, \\
		v_5 (r_s, z') = \mathcal{B}^- (r_s + \hat{v}_6 (z'), z') - \bar{\mathcal{B}}^-.
	\end{cases}
\end{equation}

Using \eqref{tran-z-2}, the equation \eqref{v5-iteration} can be reformulated as follows
\begin{equation}\label{standard-transport}
	\partial_{z_1} v_5 + K_2(z_1, z') \partial_{z_2} v_5 + K_3(z_1, z') \partial_{z_3} v_5 = 0,
\end{equation}
where $K_i(z_1, z') \,(i = 2, 3)$ are defined by
\begin{equation*}
	K_i(z_1, z') \coloneqq \frac{(1 + |z'|^2)(r_2 - r_s - \hat{v}_6) \hat{v}_i}{2(r_2 - r_s) D_0^{\hat{v}_6} (\bar{u}^+ ( D_0^{\hat{v}_6}) + \hat{v}_1) + (1 + |z'|^2)(z_1 - r_2) \sum_{j=2}^3 \hat{v}_j \partial_{z_j} \hat{v}_6}.
\end{equation*}
The characteristic trajectory $\bar{z}'(t; z) = (\bar{z}_2(t; z), \bar{z}_3(t; z))$ is determined by the following system
\begin{equation}\label{characteristic-ode}
	\begin{cases}
		\frac{\d \bar{z}_2(t; z)}{\d t} = K_2(t, \bar{z}_2(t; z), \bar{z}_3(t; z)), \quad \forall t \in [r_s, r_2], \\
		\frac{\d\bar{z}_3(t; z)}{\d t} = K_3(t, \bar{z}_2(t; z), \bar{z}_3(t; z)), \quad \forall t \in [r_s, r_2], \\
		\bar{z}_2(z_1; z) = z_2, \quad \bar{z}_3(z_1; z) = z_3.
	\end{cases}
\end{equation}

To analyze the boundary compatibility conditions on the nozzle wall, we introduce the polar coordinates $z_2 = a \cos \tau$, $z_3 = a \sin \tau$. Define
\begin{equation}\label{Ka-Ktau-definition}
	\begin{cases}x
		K_a(z_1, a, \tau) \coloneqq \cos\tau K_2 + \sin\tau K_3 = \frac{(1 + a^2)(r_2 - r_s - \hat{v}_6) \hat{v}_a}{2(r_2 - r_s) D_0^{\hat{v}_6} (\bar{u}^+ ( D_0^{\hat{v}_6}) + \hat{v}_1) + (1 + a^2)(z_1 - r_2) \left( \hat{v}_a \partial_a \hat{v}_6 + \frac{1}{a} \partial_\tau \hat{v}_6 \right)}, \\
		K_\tau(z_1, a, \tau) \coloneqq -\sin\tau K_2 + \cos\tau K_3 = \frac{(1 + a^2)(r_2 - r_s - \hat{v}_6) \hat{v}_\tau}{2(r_2 - r_s) D_0^{\hat{v}_6} (\bar{u}^+ ( D_0^{\hat{v}_6}) + \hat{v}_1) + (1 + a^2)(z_1 - r_2) \left( \hat{v}_a \partial_a \hat{v}_6 + \frac{1}{a} \partial_\tau \hat{v}_6 \right)},
	\end{cases}
\end{equation}
where $\hat{v}_a = \hat{v}_2 \cos\tau + \hat{v}_3 \sin\tau$ and $\hat{v}_\tau = -\hat{v}_2 \sin\tau + \hat{v}_3 \cos\tau$. 
Writing the characteristic trajectory in polar coordinates as $\bar{z}'(t; z) = (A(t) \cos \vartheta(t), A(t) \sin \vartheta(t))$, then $A(t)$ satisfies
\begin{equation*}
	\frac{\d}{\d t} A(t; z_1, a, \tau) = K_a(t, A, \vartheta), \quad \forall t \in [r_s, r_2].
\end{equation*}
On the boundary $|z'| = 1$, the slip boundary condition $z_2 \hat{v}_2 + z_3 \hat{v}_3 = 0$ yields $K_a(z_1, 1, \tau) = 0$. By the uniqueness of solutions to ordinary differential equations, for any $z_1 \in [r_s, r_2]$ and $z_2^2 + z_3^2 = 1$, one has
\begin{equation}\label{A-1}
	A(t; z_1, 1, \tau) = \sqrt{\bar{z}_2^2(t; z_1, 1, \tau) + \bar{z}_3^2(t; z_1, 1, \tau)} = 1, \quad \forall (t, z_1, \tau) \in [r_s, r_2]^2 \times \mathbb{T}_{2\pi}.
\end{equation}

Set $\boldsymbol{\zeta}(z) = (\zeta_2(z), \zeta_3(z)) \coloneqq (\bar{z}_2(r_s; z), \bar{z}_3(r_s; z))$, then
\begin{equation}\label{zeta-boundary}
	\zeta_2^2(z_1, 1, \tau) + \zeta_3^2(z_1, 1, \tau) = 1, \quad (z_1, \tau) \in [r_s, r_2] \times \mathbb{T}_{2\pi}.
\end{equation}
It follows from \eqref{characteristic-ode} that
\begin{equation}\label{zeta-estimate}
	\sum_{j=2}^3 \|\zeta_j(z) - z_j\|_{C^{2, \alpha}(\bar{\mathcal{D}})} \leq C_* \|(\hat{\mathbf{v}}, \hat{v}_6)\|_{\Xi}.
\end{equation}

Since $v_5$ is conserved along the trajectory, one has
\begin{equation*}
	v_5(z)=v_5(r_s,\boldsymbol{\zeta}(z))=\mathcal{B}^-(r_s+\hat{v}_6(\boldsymbol{\zeta}(z)),\boldsymbol{\zeta}(z))-\bar{\mathcal{B}}^-,
\end{equation*}
and the following estimate holds
\begin{equation}\label{est-v5}
	\begin{aligned}
		\|v_5\|_{C^{2,\alpha}(\overline{\mathcal{D}})} & \leq C_*\epsilon+C_*\epsilon (\|\hat{v}_6\|_{C^{2,\alpha}(\overline{E})}+\sum_{j=2}^3\|\zeta_j-z_j\|_{C^{2,\alpha}(\overline{\mathcal{D}})})\\\no
		&\leq C_*(\epsilon+\epsilon\|(\hat{{\bf v}}, \hat{v}_6)\|_{\Xi})\leq C_*(\epsilon+\epsilon \sigma_0).
	\end{aligned}
\end{equation}

Next, we analyze the compatibility conditions for $v_5$. First, using polar coordinates and \eqref{Ka-Ktau-definition}, we rewrite \eqref{standard-transport} as the following boundary value problem
\begin{equation}\label{v5-polar-transport}
	\begin{cases}
		\partial_{z_1} v_5 + K_a(z_1, a, \tau) \partial_a v_5 + \dfrac{K_\tau(z_1, a, \tau)}{a} \partial_\tau v_5 = 0,\\
		v_5(r_s, a, \tau) =  \mathcal{B}^-\big(r_s + \hat{v}_6(a, \tau), a, \tau\big) - \bar{\mathcal{B}}^-.
	\end{cases}
\end{equation}
Differentiating \eqref{v5-polar-transport} with respect to $a$ and evaluating at $a = 1$ yields that $\partial_a v_5$ satisfies the following on the wall
\begin{equation*}
	\begin{cases}
		\left( \partial_{z_1} (\partial_a v_5) + \dfrac{K_\tau}{a} \partial_\tau (\partial_a v_5) + \partial_a K_a \partial_a v_5 \right)(z_1, 1, \tau) = 0, \quad \forall (z_1, \tau) \in [r_s, r_2] \times \mathbb{T}_{2\pi}, \\
		(\partial_a v_5)(r_s, 1, \tau) = \partial_{z_1} \mathcal{B}^-(r_s + \hat{v}_6, 1, \tau)\partial_a \hat{v}_6(1,\tau) + \partial_a \mathcal{B}^-(r_s + \hat{v}_6, 1, \tau) = 0, \quad \forall \tau \in \mathbb{T}_{2\pi}.
	\end{cases}
\end{equation*}
By the uniqueness of solutions to ordinary differential equations, we obtain
\begin{equation}\label{com-v5}
	\partial_a v_5(z_1, 1, \tau) \equiv 0, \quad \forall (z_1, \tau) \in [r_s, r_2] \times \mathbb{T}_{2\pi}.
\end{equation}

The function $v_4$ satisfies
\begin{equation}\label{v4-iteration}
	\begin{cases}
		\left( D_1^{\hat{v}_6} + \frac{1 + |z'|^2}{2 D_0^{\hat{v}_6} (\bar{u}^+ ( D_0^{\hat{v}_6}) + \hat{v}_1)} \sum_{i=2}^3 \hat{v}_i D_i^{\hat{v}_6} \right) v_4 = 0, \\
		v_4 (r_s, z') = a_2 v_6 (z') + R_2(\hat{\mathbf{v}} (r_s, z'), \hat{v}_6 (z')).
	\end{cases}
\end{equation}
Applying the method of characteristics together with \eqref{RH-9} yields
\begin{equation}\label{v4-exp}
	\begin{aligned}
		v_4 (z) & = v_4 (r_s,\boldsymbol{\zeta}(z)) 
		= a_2 v_6 (\boldsymbol{\zeta}(z)) + R_2 (\hat{\mathbf{v}} (r_s, \boldsymbol{\zeta}(z)), \hat{v}_6 (\boldsymbol{\zeta}(z)))\\
		& = a_2 v_6 (z') + a_2 (v_6 (\boldsymbol{\zeta}(z))  - v_6 (z')) + R_2 (\hat{\mathbf{v}} (r_s, \boldsymbol{\zeta}(z)), \hat{v}_6 (\boldsymbol{\zeta}(z)))\\
		& = 
		\frac{a_2}{a_1} v_1 (r_s, z') + a_2 (v_6 (\boldsymbol{\zeta}(z))  - v_6 (z')) + R_3 (\hat{\mathbf{v}} (r_s, \boldsymbol{\zeta}(z)), \hat{v}_6 (\boldsymbol{\zeta}(z))),
	\end{aligned}
\end{equation}
with 
\begin{equation*}
	R_3 (\hat{\mathbf{v}} (r_s, \boldsymbol{\zeta}(z)), \hat{v}_6 (\boldsymbol{\zeta}(z))) = R_2 (\hat{\mathbf{v}} (r_s, \boldsymbol{\zeta}(z)), \hat{v}_6 (\boldsymbol{\zeta}(z))) - \frac{a_2}{\alpha_1} R_1 ( \hat{\mathbf{v}} (r_s, z'), \hat{v}_6 (z') ).
\end{equation*}

Since $v_6$ remains unknown, we can denote \eqref{v4-exp} by
\begin{equation}\label{v4-exp-1}
	v_4 (z) = \frac{a_2}{a_1} v_1 (r_s, z') + R_4  (\hat{\mathbf{v}} (r_s, \boldsymbol{\zeta}(z)), \hat{v}_6 (\boldsymbol{\zeta}(z))),
\end{equation}
where
\begin{equation*}
	R_4  (\hat{\mathbf{v}} (r_s, \boldsymbol{\zeta}(z)), \hat{v}_6 (\boldsymbol{\zeta}(z))) = a_2 (v_6 (\boldsymbol{\zeta}(z))  - v_6 (z')) + R_3 (\hat{\mathbf{v}} (r_s, \boldsymbol{\zeta}(z)), \hat{v}_6 (\boldsymbol{\zeta}(z))).
\end{equation*}
Hence, $v_4$ satisfies the estimate
\begin{equation}\label{v4-est}
	\begin{aligned}
		\| v_4 \|_{C^{2, \alpha} (\bar{\mathcal{D}})}  \leq & C_* (\| v_1 (r_s, \cdot) \|_{C^{2, \alpha}(\bar{E})} + \| R_4 \|_{C^{2, \alpha} (\bar{\mathcal{D}})}) \\
		\leq & C_* ( \| v_1 (r_s, \cdot) \|_{C^{2, \alpha}(\bar{E})} + \|\hat{v}_6\|_{C^{2,\alpha}(\overline{E})} \sum_{j=2}^3\|\zeta_j-z_j\|_{C^{2,\alpha}(\overline{\mathcal{D}})}) \\ 
		& + C_* (\epsilon \|(\hat{{\bf v}}, \hat{v}_6)\|_{\Xi} + \|(\hat{{\bf v}}, \hat{v}_6)\|_{\Xi}^2) \\
		\leq & C_* \| v_1 (r_s, \cdot) \|_{C^{2, \alpha}(\bar{E})} + C_* (\epsilon \sigma_0 + \sigma_0^2).
	\end{aligned}
\end{equation}

By virtue of the compatibility conditions \eqref{com-Xi}, we can derive
\begin{equation}\label{com-N}
	\begin{cases}
		\mathcal{N}_a (\hat{{\bf v}}(r_s,1,\tau),\hat{v}_6(1,\tau)) =  0,   \\
		\partial_a (\mathcal{N}_1, \mathcal{N}_{\tau}) (\hat{{\bf v}}(r_s,1,\tau),\hat{v}_6(1,\tau)) = 0,\\
		\cos\tau \, g_2 (\hat{{\bf v}}(r_s,1,\tau),\hat{v}_6(1,\tau)) + \sin \tau \, g_3 (\hat{{\bf v}}(r_s,1,\tau),\hat{v}_6(1,\tau)) = 0, \\
		\partial_a R_{0i} (\hat{{\bf v}}(r_s,1,\tau),\hat{v}_6(1,\tau))= 0 , \, i = 1, 2, 3,
	\end{cases} \forall \, \tau \in \mathbb{T}_{2\pi},
\end{equation}
where $\mathcal{N}_a = \cos \tau \mathcal{N}_2 + \sin \tau \mathcal{N}_3$ and $\mathcal{N}_{\tau} = -\sin \tau \mathcal{N}_2 + \cos \tau \mathcal{N}_3$. Consequently, for $i = 1, 2, 3$, 
\begin{equation}\label{com-R}
	\partial_a R_i (\hat{{\bf v}}(r_s,1,\tau),\hat{v}_6(1,\tau))= 0, \quad \forall  \tau \in \mathbb{T}_{2\pi},
\end{equation}
which yields 
\begin{equation}\label{com-R4}
	\partial_a R_{4}(\hat{{\bf v}}(r_s,\boldsymbol{\zeta}),\hat{v}_6(\boldsymbol{\zeta})) (z_1,1,\tau)=0, \ \ \forall (z_1,\tau)\in [r_s,r_2]\times \mathbb{T}_{2\pi},
\end{equation}
and 
\begin{equation}\label{com-v4}
	\partial_a v_4(z_1,1,\tau)=\frac{a_2}{a_1}\partial_a v_1(r_s,1,\tau),\ \ \forall (z_1,\tau)\in [r_s,r_2]\times \mathbb{T}_{2\pi}.
\end{equation}

{\bf (iii) } We analyze the first component of the vorticity. It follows from \eqref{omega-1} and \eqref{bdry-omega1} that
\begin{equation}\label{omega1}
	\begin{cases}
		D_1^{\hat{v}_6} \tilde{\omega}_1 + \frac{1 + |z'|^2}{2 D_0^{\hat{v}_6} (\bar{u}^+ ( D_0^{\hat{v}_6}) + \hat{v}_1)} \sum_{i=2}^{3} \hat{v}_i D_i^{\hat{v}_6} \tilde{\omega}_1 
		+ \mu (\mathbf{\hat{v}}, \hat{v}_6) \tilde{\omega}_1 = J (\mathbf{\hat{v}}, \hat{v}_6), \\
		\tilde{\omega}_1 (r_s, z') = R_5 (\mathbf{\hat{v}}(r_s, z'), \hat{v}_6(z')), \quad z' \in \bar{E},
	\end{cases}
\end{equation}
where
\begin{equation*}\begin{aligned}
		& R_5 (\mathbf{\hat{v}}(r_s, z'), \hat{v}_6(z'))=  \frac{(1 + |z'|^2)^2}{2 \alpha_0 r_s^2} \bigg(\partial_{z_3} \left( \frac{g_2({\bf \hat{v}}(r_s,y'),\hat{v}_6(z'))}{1 + |z'|^2}\right) \\ 
		& \qquad \quad - \partial_{z_2} \left( \frac{g_3({\bf \hat{v}}(r_s,y'),\hat{v}_6(z'))}{1 + |z'|^2}\right)\bigg) 
		+ f_1 ({\bf \hat{v}}(r_s,y'),\hat{v}_6(z')).
\end{aligned}\end{equation*}
Integrating \eqref{omega1} along the characteristic trajectory $(\tau, \bar{z}_2(\tau; z), \bar{z}_3(\tau; z))$ and utilizing the boundary conditions, we directly obtain
\begin{equation} \label{explicit-omega1}
	\begin{aligned}
		\tilde{\omega}_1(z_1, z') =\; & R_5 \big(\mathbf{\hat{v}}(r_s, \boldsymbol{\zeta}(z)), \hat{v}_6(\boldsymbol{\zeta}(z))\big) \exp \left( - \int_{r_s}^{z_1} \frac{\mu(\tau, \bar{z}'(\tau; z))}{M(\tau, \bar{z}'(\tau; z))} d\tau \right) \\
		& + \int_{r_s}^{z_1} \frac{J(t, \bar{z}'(t; z))}{M(t, \bar{z}'(t; z))} \exp \left( - \int_{t}^{z_1} \frac{\mu(\tau, \bar{z}'(\tau; z))}{M(\tau, \bar{z}'(\tau; z))} d\tau \right) dt,
	\end{aligned}
\end{equation}
where
\begin{equation*}
	M(z_1, z') =  \frac{2(r_2 - r_s) D_0^{\hat{v}_6} (\bar{u}^+(D_0^{\hat{v}_6}) + \hat{v}_1) - (1 + |z'|^2)(r_2 - z_1) \sum_{j=2}^3 \hat{v}_j \partial_{z_j} \hat{v}_6}{2 D_0^{\hat{v}_6} (\bar{u}^+(D_0^{\hat{v}_6}) + \hat{v}_1) (r_2 - r_s - \hat{v}_6)}.
\end{equation*}
Consequently, we can obtain that $\tilde{\omega}_1$ satisfies the following estimate
\begin{equation}\label{estimate-omega1}
	\begin{aligned}
		\|\tilde{\omega}_1\|_{C^{1, \alpha} (\bar{\mathcal{D}})} & \le C_* \left( \|\tilde{\omega}_1(r_s, \cdot)\|_{C^{1, \alpha} (\bar{E})} +  \|J (\mathbf{\hat{v}}, \hat{v}_6)\|_{C^{1, \alpha} (\bar{\mathcal{D}})} \right) \\
		& \leq C_* (\epsilon \|(\hat{{\bf v}}, \hat{v}_6)\|_{\Xi} + \|(\hat{{\bf v}}, \hat{v}_6)\|_{\Xi}^2) \leq C_* (\epsilon \sigma_0 + \sigma_0^2).
	\end{aligned}
\end{equation}

It follows from \eqref{com-Xi} that  
\begin{equation}\label{omega1-com}\begin{aligned}
		& \tilde{\omega}_1 (r_s, 1, \tau) = R_5 (\mathbf{\hat{v}}(r_s, 1, \tau), \hat{v}_6(1, \tau )) = 0 ,\, \forall \tau \in \mathbb{T}_{2\pi},  \\
		& J (\hat{{\bf v}}, \hat{v}_6)(z_1, 1, \tau) = 0, \, \forall (z_1, \tau) \in [r_s, r_2] \times \mathbb{T}_{2\pi}.
\end{aligned}\end{equation}
Combining \eqref{omega1-com} with \eqref{A-1}, \eqref{zeta-boundary} and \eqref{explicit-omega1} yields the following compatibility condition
\begin{equation}\label{com-omega1}
	\tilde{\omega}_1 (z_1, 1, \tau) = 0, \quad \forall  (z_1, \tau) \in [r_s, r_2] \times \mathbb{T}_{2\pi}.
\end{equation}

Substituting \eqref{v4-exp-1} and \eqref{explicit-omega1} into \eqref{omega} yields
\begin{equation}\label{curl}
	\begin{aligned}
		& \frac{1 + |z'|^2}{2 z_1} (\partial_{z_2} v_3 - \partial_{z_3} v_2) + \frac{1}{z_1} (z_3 v_2 - z_2 v_3) = \tilde{H}_1 (\hat{\mathbf{v}}, \hat{v}_6) , \\
		& \frac{1 + |z'|^2}{2 z_1} \partial_{z_3}  v_1  - \partial_{z_1} v_3 - \frac{v_3}{z_1} + \frac{1 + |z'|^2}{2 z_1} d_3 (z_1) \partial_{z_3} v_1 (r_s, z')
		=  \tilde{H}_2 (v_5, \hat{\mathbf{v}}, \hat{v}_6), \\
		& \partial_{z_1} v_2 + \frac{v_2}{z_1} - \frac{1 + |z'|^2}{2 z_1} \partial_{z_2} v_1  - \frac{1 + |z'|^2}{2 z_1} d_3 (z_1) \partial_{z_2} v_1 (r_s, z') 
		=  \tilde{H}_3 (v_5, \hat{\mathbf{v}}, \hat{v}_6),
	\end{aligned}
\end{equation}
where 
\begin{align*}
		d_3 (z_1) & =  \frac{\bar{\mathcal{B}}^+ (c^2 - (\bar{u}^+(z_1))^2) - c^2 \sqrt{c^2 - (\bar{u}^+(z_1))^2}}{\gamma \bar{K}^+ \bar{\mathcal{B}}^+ \bar{u}^+ (z_1)} \frac{a_2}{a_1} > 0 , \\
		\tilde{H}_1 (\hat{\mathbf{v}}, \hat{v}_6)  & = \tilde{\omega}_1 + H_1 (\hat{\mathbf{v}}, \hat{v}_6),\\
		\tilde{H}_2 (v_5, \hat{\mathbf{v}}, \hat{v}_6) &= \frac{\hat{v}_2}{\bar{u}^+ (D_0^{\hat{v}_6}) + \hat{v}_1} \tilde{\omega}_1 
		+ \frac{1 + |z'|^2}{2 D_0^{\hat{v}_6} (\bar{u}^+ (D_0^{\hat{v}_6}) + \hat{v}_1)} \frac{c^2 - (\bar{u}^+ (D_0^{\hat{v}_6}) + \hat{v}_1)^2 - \sum\nolimits_{i= 2}^3 \hat{v}_i^2}{\bar{\mathcal{B}}^+ + \hat{v}_5}  D_3^{\hat{v}_6} v_5  \\
		& \quad
		+ H_2 (\mathbf{\hat{v}}, \hat{v}_6) - \frac{1 + |z'|^2}{2 z_1} \frac{\bar{\mathcal{B}}^+ (c^2 - (\bar{u}^+(z_1))^2) - c^2 \sqrt{c^2 - (\bar{u}^+(z_1))^2}}{\gamma \bar{K}^+ \bar{\mathcal{B}}^+} \partial_{z_3} R_4 (\hat{\mathbf{v}}, \hat{v}_6), \\
		\tilde{H}_3 (v_5, \hat{\mathbf{v}}, \hat{v}_6) &= \frac{\hat{v}_3}{\bar{u}^+ (D_0^{\hat{v}_6}) + \hat{v}_1} \tilde{\omega}_1 - \frac{1 + |z'|^2}{2 D_0^{\hat{v}_6} (\bar{u}^+ (D_0^{\hat{v}_6}) + \hat{v}_1)} \frac{c^2 - (\bar{u}^+ (D_0^{\hat{v}_6}) + \hat{v}_1)^2 - \sum\nolimits_{i= 2}^3 \hat{v}_i^2}{\bar{\mathcal{B}}^+ + \hat{v}_5}  D_2^{v_6} v_5 \\
		& \quad
		+ H_3 (\mathbf{\hat{v}}, \hat{v}_6) + \frac{1 + |z'|^2}{2 z_1} \frac{\bar{\mathcal{B}}^+ (c^2 - (\bar{u}^+(z_1))^2) - c^2 \sqrt{c^2 - (\bar{u}^+(z_1))^2}}{\gamma \bar{K}^+ \bar{\mathcal{B}}^+} \partial_{z_2} R_4 (\hat{\mathbf{v}}, \hat{v}_6).
\end{align*}

Moreover, it follows from \eqref{rel-deform-v1} that 
\begin{equation}\label{deform}
	d_1 (z_1) \partial_{z_1} v_1 +  \frac{1 + |z'|^2}{2 z_1} \sum_{j = 2}^3 \partial_{z_j} v_j + \frac{2 v_1}{z_1} - \frac{1}{z_1} \sum_{j = 2}^{3} z_j v_j 
	+ d_2 (z_1) v_1 = \tilde{H}_0 (v_5, \mathbf{\hat{v}}, \hat{v}_6), \quad \text{in } \mathcal{D},
\end{equation}
with $\tilde{H}_0 (v_5, \mathbf{\hat{v}}, \hat{v}_6) = d_0 (D_0^{\hat{v}_6}) v_5 + H (\mathbf{\hat{v}}, \hat{v}_6)$.

Substituting \eqref{v4-exp-1} into the boundary condition \eqref{bdry-4}, we obtain the exit boundary condition
\begin{equation}\label{bdry-5}
	v_1 (r_2, z') + d_3 (r_2) v_1 (r_s, z') = q_2 (z'),
\end{equation}
where 
\begin{equation*}
	\begin{aligned}
		q_2 (z') =&  \frac{\sqrt{c^2 - (\bar{u}^+(r_2))^2}}{\bar{\mathcal{B}}^+ \bar{n}^+(r_2)  \bar{u}^+(r_2)}(E(\mathbf{v}(r_2, z')) -  \epsilon p_{ex} (z')) + \frac{c^2 - (\bar{u}^+ (r_2))^2 }{\bar{\mathcal{B}}^+ \bar{u}^+(r_2)}v_5(r_2, z') \\
		&- d_3 (r_2) R_4  (\hat{\mathbf{v}} (r_s, \boldsymbol{\zeta}(r_2, z')), \hat{v}_6 (\boldsymbol{\zeta}(r_2, z'))).
	\end{aligned}
\end{equation*}
Then, from \eqref{com-p}, \eqref{E}, \eqref{com-Xi}, \eqref{com-v5} and \eqref{com-R4}, we obtain
\begin{equation}\label{com-q2}
	\partial_a q_2 (1, \tau) = 0, \quad \forall \tau \in \mathbb{T}_{2\pi}.
\end{equation}

{\bf (iv) } We now obtain the deformation-curl system \eqref{curl} and \eqref{deform} satisfied by the velocity field, as well as the boundary conditions \eqref{bdry-2}, \eqref{bdry-3}, \eqref{wall-bdry-3} and \eqref{bdry-5}, where $q_1$, $R_1$ and $g_i$ ($i = 2,3$) are evaluated at $(\hat{\mathbf{v}}, \hat{v}_6)$. Due to the linearization, the inhomogeneous terms $(\tilde{H}_1, \tilde{H}_2, \tilde{H}_3)(v_5, \hat{\mathbf{v}}, \hat{v}_6)$ may not be divergence-free, and therefore the solvability condition of the curl system may not hold. To this end, we introduce a new unknown function $\Pi$ to formulate an enlarged deformation-curl system as follows
\begin{equation}\label{deform-curl}
	\begin{cases}
		d_1 (z_1) \partial_{z_1} v_1 +  \frac{1 + |z'|^2}{2 z_1} \sum_{j = 2}^3 \partial_{z_j} v_j + \frac{2 v_1}{z_1} - \frac{1}{z_1} \sum_{j = 2}^{3} z_j v_j + d_2 (z_1) v_1 = \tilde{H}_0 (v_5, \mathbf{\hat{v}}, \hat{v}_6),\\
		\frac{1 + |z'|^2}{2 z_1} (\partial_{z_2} v_3 - \partial_{z_3} v_2) + \frac{1}{z_1} (z_3 v_2 - z_2 v_3) + \partial_{z_1} \Pi= \tilde{H}_1 (\hat{\mathbf{v}}, \hat{v}_6) , \\
		\frac{1 + |z'|^2}{2 z_1} \partial_{z_3}  v_1  - \partial_{z_1} v_3 - \frac{v_3}{z_1} + \frac{1 + |z'|^2}{2 z_1} d_3 (z_1) \partial_{z_3} v_1 (r_s, z') + \frac{1 + |z'|^2}{2 z_1} \partial_{z_2} \Pi
		=  \tilde{H}_2 (v_5, \hat{\mathbf{v}}, \hat{v}_6), \\
		\partial_{z_1} v_2 + \frac{v_2}{z_1} - \frac{1 + |z'|^2}{2 z_1} \partial_{z_2} v_1  - \frac{1 + |z'|^2}{2 z_1} d_3 (z_1) \partial_{z_2} v_1 (r_s, z') + \frac{1 + |z'|^2}{2 z_1} \partial_{z_3} \Pi
		=  \tilde{H}_3 (v_5, \hat{\mathbf{v}}, \hat{v}_6),
	\end{cases}
\end{equation}
with the boundary conditions
\begin{equation}\label{bdry-6}
	\begin{cases}
		(\partial_{z_2}^2 + \partial_{z_3}^2) v_1 (r_s, z') - 2 \alpha_0 a_1 r_s \sum_{j = 2}^{3} \partial_{z_j} \left(\frac{v_j (r_s, z')}{1 + |z'|^2} \right) = q_1({\bf \hat{v}}(r_s,z'),\hat{v}_6(z')) , \quad \forall z' \in E, \\
		v_1 (r_2, z') + d_3 (r_2) v_1 (r_s, z') = q_2 (z'), \quad \forall z' \in E, \\
		\partial_{z_1} \Pi (r_s, z') = \partial_{z_1} \Pi (r_2, z') =0 , \quad \forall z' \in E, \\ 
		(z_2 v_2 + z_3 v_3) (z_1, z') = \Pi (z_1, z') = 0, \quad \forall z_1 \in [r_s , r_2], \quad |z'| = 1, \\
		\sum_{j = 2}^3 z_j \partial_{z_j} v_1 (r_s, z') - \alpha_0 a_1 r_s \sum_{j = 2}^3 z_j v_j (r_s, z') = 0, \quad \text{on } |z'| = 1, 
	\end{cases}
\end{equation}
where the last boundary condition in \eqref{bdry-6} is derived by \eqref{com-N} and \eqref{com-R}.

Moreover, one can obtain the following compatibility condition of $\tilde{H}_i$
\begin{equation}\label{com-H}
	\begin{cases}
		\tilde{H}_1 (\hat{\mathbf{v}}, \hat{v}_6) (z_1, 1, \tau) = 0, \\
		\tilde{H}_{\tau} (v_5, \hat{\mathbf{v}}, \hat{v}_6) (z_1, 1, \tau) = 0, \\
		\partial_a \tilde{H}_a (v_5, \hat{\mathbf{v}}, \hat{v}_6) (z_1, 1, \tau) = 0,
	\end{cases} \quad \forall (z_1, \tau) \in [r_s, r_2] \times \mathbb{T}_{2\pi},
\end{equation}
where $\tilde{H}_a = \cos \tau \tilde{H}_2 + \sin \tau \tilde{H}_3$ and $\tilde{H}_{\tau} = - \sin \tau \tilde{H}_2 + \cos \tau \tilde{H}_3$.

We will prove the existence and uniqueness of solutions to \eqref{deform-curl}-\eqref{bdry-6} in the following steps.

{\bf (a) } Applying the divergence operator to the second, third, and fourth equations of \eqref{deform-curl} yields
\begin{equation}\label{Pi}
	\begin{cases}
		\partial_{z_1}^2 \Pi + \frac{1+|z'|^2}{4 z_1^2}\sum_{j=2}^3 \partial_{z_j}((1+|z'|^2) \partial_{z_j} \Pi) + \frac{2}{z_1} \partial_{z_1} \Pi-\frac{1+|z'|^2}{2 z_1^2}\sum_{j=2}^3 z_j \partial_{z_j} \Pi\\
		\quad\quad\quad= \partial_{z_1} \tilde{H}_1 + \frac{1+|z'|^2}{2 z_1}\sum_{j=2}^3 \partial_{z_j} \tilde{H}_j +\frac{2 \tilde{H}_1}{z_1}- \frac{1}{z_1}\sum_{j=2}^3 z_j \tilde{H}_j,\ \text{in }\mathcal{D},\\
		\partial_{z_1} \Pi(r_s,z')= \partial_{z_1} \Pi(r_2,z')=0,\ \forall z'\in E,\\
		\displaystyle \Pi(z_1,z')=0,\ \ \forall (z_1,z')\in \Gamma_0.
	\end{cases}
\end{equation}
It follows from \cite[Lemma 3.2]{Weng25} that, assuming $\tilde{H}_i \in C^{1, \alpha} (\bar{\mathcal{D}})\, (i = 1, 2,3)$, there exists a unique solution $\Pi \in C^{2, \alpha} (\bar{\mathcal{D}})$ to the problem \eqref{Pi}, which satisfies the estimate
\begin{equation}\label{Pi-est}
	\|\Pi\|_{C^{2,\alpha}(\overline{\mathcal{D}})}\leq C_*\sum_{i=1}^3\|\tilde{H}_i\|_{C^{1,\alpha}(\overline{\mathcal{D}})}\leq C_*(\epsilon\|(\hat{{\bf v}}, \hat{v}_6)\|_{\Xi}+\|(\hat{{\bf v}}, \hat{v}_6)\|_{\Xi}^2).
\end{equation} 

Additionally, expressing the boundary value problem \eqref{Pi} in the coordinate system $(z_1, a, \tau)$ together with the Dirichlet boundary conditions and the compatibility condition \eqref{com-H} yileds that 
\begin{equation}\label{com-Pi}
	\partial_a^2 \Pi (z_1,1,\tau)+ \partial_a \Pi (z_1,1,\tau)=0,\, \forall (z_1,\tau)\in [r_s,r_2]\times \mathbb{T}_{2\pi}.
\end{equation}

{\bf (b) } We then solve the divergence-curl system with normal boundary conditions 
\begin{equation}\label{div-curl}
	\begin{cases}
		\partial_{z_1} \dot{v}_1+\frac{1+|z'|^2}{2 z_1}\sum_{j=2}^3 \partial_{z_j} \dot{v}_j +\frac{2\dot{v}_1}{z_1}-\frac{1}{z_1}\sum_{j=2}^3 z_j \dot{v}_j=0,\ \text{in }\mathcal{D},\\
		\frac{1+|z'|^2}{2z_1}(\partial_{z_2} \dot{v}_3- \partial_{z_3} \dot{v}_2)+ \frac{1}{z_1}(z_3 \dot{v}_2-z_2 \dot{v}_3)=\tilde{H}_1- \partial_{z_1}\Pi \eqqcolon \hat{H}_1,\ \text{in }\mathcal{D},\\
		\frac{1+|z'|^2}{2 z_1} \partial_{z_3} \dot{v}_1- \partial_{z_1} \dot{v}_3-\frac{\dot{v}_3}{z_1}= \tilde{H}_2-\frac{1+|z'|^2}{2z_1} \partial_{z_2}\Pi \eqqcolon \hat{H}_2,\ \text{in }\mathcal{D},\\
		\partial_{z_1} \dot{v}_2+\frac{\dot{v}_2}{z_1}-\frac{1+|z'|^2}{2 z_1} \partial_{z_2} \dot{v}_1= \tilde{H}_3-\frac{1+|z'|^2}{2z_1} \partial_{z_3}\Pi \eqqcolon \hat{H}_3,\ \text{in }\mathcal{D},\\
		\dot{v}_1(r_s,z')= \dot{v}_1(r_2,z')=0,\ \forall z'\in E,\\
		\sum_{j=2}^3 z_j \dot{v}_j (z_1, z')=0,\ \forall (z_1, z')\in \Gamma_0.
	\end{cases}
\end{equation}

It follows from \eqref{com-H} and \eqref{com-Pi} that on $\Gamma_0$ there holds
\begin{equation}\label{com-hat-H}\begin{cases}
		\hat{H}_1(v_5, \hat{{\bf v}},\hat{v}_6)(z_1,1,\tau)=0,\ \ \ \\
		\hat{H}_{\tau}(v_5,\hat{{\bf v}},\hat{v}_6)(z_1,1,\tau)=(\tilde{H}_{\tau}-\frac{1}{z_1}\partial_{\tau}\Pi)(z_1,1,\tau)=0,\ \ \\
		\partial_a \hat{H}_a(v_5,\hat{{\bf v}},\hat{v}_6) (z_1,1,\tau)=(\partial_a \tilde{H}_a-\frac{1}{z_1}(\partial_a^2 \Pi+\partial_a \Pi))(z_1,1,\tau)=0,
	\end{cases}
\end{equation}
where $\hat{H}_a=\cos\tau \hat{H}_2 +\sin\tau \hat{H}_3$ and $\hat{H}_{\tau}=-\sin\tau \hat{H}_2 +\cos\tau \hat{H}_3$.

Since $\Pi$ satisfies the equation in \eqref{Pi}, there holds
\begin{equation}\label{div-hat-H}
	\partial_{z_1} \hat{H}_1+ \frac{1+|z'|^2}{2z_1}\sum_{j=2}^3 \partial_{z_j}\hat{H}_j+\frac{2}{z_1} \hat{H}_1-\frac{1}{z_1}\sum_{j=2}^3 z_j \hat{H}_j\equiv 0,\ \text{in }\mathcal{D}.
\end{equation}
By \cite[Lemma 3.3]{Weng25}, assuming that $(\hat{H}_1, \hat{H}_2, \hat{H}_3) \in (C^{1, \alpha} (\bar{\mathcal{D}}))^3$ satisfies \eqref{com-hat-H} and \eqref{div-hat-H}, the problem \eqref{div-curl} admits a unique solution $(\dot{v}_1, \dot{v}_2, \dot{v}_3) \in (C^{2, \alpha} (\bar{\mathcal{D}}))^3$, which satisfies the estimate
\begin{equation}\label{est-dot-v}
	\sum_{i = 1}^3 \| \dot{v}_i \|_{C^{2, \alpha} (\bar{\mathcal{D}})} \leq C_* \sum_{i = 1}^{3} \| \hat{H}_i \|_{C^{1, \alpha} (\bar{\mathcal{D}})},
\end{equation}
and the compatibility conditions
\begin{equation}\label{com-dot-v}
	\begin{cases}
		(\dot{v}_a,\partial_a \dot{v}_1,\partial_a \dot{v}_{\tau})(z_1,1,\tau)=0,\\
		(\partial_a^2\dot{v}_a+\partial_a \dot{v}_a)(z_1,1,\tau)=0,
	\end{cases} \forall (z_1,\tau)\in [r_s,r_2]\times \mathbb{T}_{2\pi},
\end{equation}
where $\dot{v}_a=\cos\tau \ \dot{v}_2+ \sin \tau \ \dot{v}_3$ and $\dot{v}_{\tau}=-\sin\tau \ \dot{v}_2+ \cos\tau \ \dot{v}_3$.

Let $(v_1, v_2, v_3) $ be the solution to \eqref{deform-curl}, and set $\tilde{v}_i = v_i - \dot{v}_i$ $(i = 1, 2, 3)$. Then $(\tilde{v}_1, \tilde{v}_2, \tilde{v}_3)$ satisfies
\begin{equation}\label{tilde-v}
	\begin{cases}
		d_1(z_1)\partial_{z_1} \tilde{v}_1+\frac{1+|z'|^2}{2z_1}\sum_{j=2}^3 \partial_{z_j} \tilde{v}_j +\frac{2 \tilde{v}_1}{z_1}-\frac{1}{z_1}\sum_{j=2}^3 z_j \tilde{v}_j + d_2(z_1) \tilde{v}_1 = \tilde{H}_4(v_5, \hat{{\bf v}},\hat{v}_6),\ \text{in }\mathcal{D},\\
		\frac{1+|z'|^2}{2 z_1}(\partial_{z_2} \tilde{v}_3-\partial_{z_3} \tilde{v}_2)+ \frac{1}{z_1}(z_3 \tilde{v}_2-z_2 \tilde{v}_3)=0,\  \text{in }\mathcal{D},\\
		\frac{1+|z'|^2}{2z_1}\partial_{z_3} (\tilde{v}_1+d_3(z_1) \tilde{v}_1(r_s,z'))-\partial_{z_1} \tilde{v}_3-\frac{\tilde{v}_3}{z_1}= 0,\ \text{in }\mathcal{D},\\
		\partial_{z_1} \tilde{v}_2-\frac{1+|z'|^2}{2z_1}\partial_{z_2} (\tilde{v}_1+d_3(z_1) \tilde{v}_1(r_s,z'))+\frac{\tilde{v}_2}{z_1} = 0,\ \text{in }\mathcal{D},\\
		\sum_{j=2}^3\partial_{z_j}^2 \tilde{v}_1(r_s,z')- \alpha_0 a_1\partial_{z_j}\big(\frac{2r_s \tilde{v}_j}{1+|z'|^2}\big)(r_s,z')=q_3(\hat{{\bf v}}(r_s,z'),\hat{v}_6(z')),\ \forall z'\in E,\\
		\tilde{v}_1(r_2,z')+ d_3(r_2) \tilde{v}_1(r_s,z')=q_4(z'),\ \forall z'\in E,\\
		z_2 \tilde{v}_2(z_1,z')+ z_3 \tilde{v}_3(z_1,z')=0,\ \forall (z_1,z')\in \Gamma_0,\\
		\sum_{j=2}^3 z_j \partial_{z_j} \tilde{v}_1(r_s,z')-\alpha_0 a_1 r_s\sum_{j=2}^3 z_j \tilde{v}_j(r_s,z')=0, \text{ on } |z'|=1,
	\end{cases}
\end{equation}
where
\begin{align*}
	&\tilde{H}_4(v_5, \hat{{\bf v}},\hat{v}_6)= \tilde{H}_0 (v_5, \hat{{\bf v}},\hat{v}_6)+\bar{\mathcal{M}}^2(z_1)\partial_{z_1}\dot{v}_1-d_2(z_1)\dot{v}_1,\\
	&q_3(\hat{{\bf v}}(r_s,z'),\hat{v}_6(z'))=q_1(\hat{{\bf v}}(r_s,z'),\hat{v}_6(z'))+2r_s \alpha_0 a_1\sum_{j=2}^3\partial_{z_j}\big(\frac{\dot{v}_j}{1+|z'|^2}\big)(r_s,z').
\end{align*}

Rewriting the second, the third and the fourth equation in \eqref{tilde-v} as
\begin{equation*}\begin{cases}
		\partial_{z_2}\big(\frac{2 z_1 \tilde{v}_3}{1+|z'|^2}\big)-\partial_{z_3}\big(\frac{2 z_1 \tilde{v}_2}{1+|z'|^2}\big)=0,  \\
		\partial_{z_3} (\tilde{v}_1+d_3(z_1) \tilde{v}_1(r_s,z'))-\partial_{z_1} \big( \frac{z_1 \tilde{v}_3}{1 + |z'|^2} \big) = 0,  \\
		\partial_{z_1} \big( \frac{z_1 \tilde{v}_2}{1 + |z'|^2} \big) - \partial_{z_2} (\tilde{v}_1+d_3(z_1) \tilde{v}_1(r_s,z')) = 0, 
	\end{cases} \text{in } \mathcal{D}.
\end{equation*}
Hence a potential function $\psi$ can be introduced such that
\begin{equation*}
	\partial_{z_1} \psi = \tilde{v}_1 (z_1, z') + d_3 (z_1) \tilde{v}_1 (r_s, z'), \quad
	\partial_{z_j} \psi = \frac{2 z_1 \tilde{v}_j}{1 + |z'|^2}, 
	\, 
	j = 2, 3.
\end{equation*}
Consequently, 
\begin{equation*}\begin{cases}
		\tilde{v}_1 (r_s, z') = \frac{1}{1 + d_3 (r_s)} \partial_{z_1} \psi (r_s, z'), \\ 
		\tilde{v}_1 (z_1, z') = \partial_{z_1} \psi (z_1, z') - \frac{d_3 (z_1)}{1 + d_3 (r_s)} \partial_{z_1} \psi (r_s, z'), \\ 
		\tilde{v}_j (z_1, z') = \frac{1 + |z'|^2}{2 z_1} \partial_{z_j} \psi (z_1, z'), \, j= 2, 3,
	\end{cases}
\end{equation*}
and thus the problem \eqref{tilde-v} can be reformulated as
\begin{equation}\label{psi}
	\begin{cases}
		d_1(z_1) \partial_{z_1}^2 \psi
		+ \frac{(1+|z'|^2)^2}{4 z_1^2} \sum_{j=2}^3 \partial_{z_j}^2  \psi + \frac{2}{z_1} \partial_{z_1} \psi
		+ d_2(z_1) \partial_{z_1} \psi
		- \frac{ d_4(z_1)}{a_3} \partial_{z_1} \psi(r_s, z')
		= \tilde{H}_4, \quad \text{in } \mathcal{D}, \\
		\sum_{j=2}^3 \partial_{z_j}^2 \left( \partial_{z_1} \psi(r_s, z')
		- a_4  \psi(r_s, z') \right)
		= a_3 q_3(z'), \quad \forall z' \in E, \\
		\partial_{z_1} \psi(r_2, z')  = q_4(z'), \quad \forall z' \in E, \\
		z_2 \partial_{z_2} \psi + z_3 \partial_{z_3} \psi = 0, \quad \forall (z_1, z') \in \Gamma_0, \\
		\sum_{j=2}^3 z_j \partial_{z_j} \left( \partial_{z_1} \psi(r_s, z')
		- a_4  \psi(r_s, z') \right) = 0, \quad \text{on } z_1 = r_s,\ |z'| = 1,
	\end{cases}
\end{equation}
where
\begin{align*}
	a_3 = & 1 + d_3(r_s) > 0, \quad a_4 = \alpha_0 a_1 a_3 > 0, \\
	d_4 (z_1) = & d_1(z_1) d_3'(z_1) + \left( \frac{2}{z_1} + d_2(z_1) \right) d_3(z_1)  \\
	= &  \frac{a_2}{a_1} \frac{1}{\gamma \bar{K}^+ \bar{\mathcal{B}}^+} \frac{2}{z_1 \bar{u}^+ (z_1)  } \left( \bar{\mathcal{B}}^+ (c^2 + (\bar{u}^+ (z_1))^2) - \frac{c^4}{\sqrt{c^2 - (\bar{u}^+ (z_1))^2}} \right) \\
	& 
	+ \frac{2}{z_1} \left( 1 + \frac{c^2}{1 - \bar{\mathcal{M}}^2 (z_1)} \left( \frac{2 \bar{\mathcal{M}}^2 (z_1)}{c^2 - (\bar{u}^+ (z_1))^2} + \frac{(\gamma - 1) c^4 (\bar{u}^+ (z_1))^4}{\bar{\mathcal{B}}^+ \bar{c}_s^4 (z_1) (c^2 - (\bar{u}^+ (z_1))^2)^{5/2} } \right) \right) \\
	& \quad
	\times \frac{a_2}{a_1 \gamma \bar{K}^+ \bar{u}^+ (z_1)} \frac{\sqrt{c^2 - (\bar{u}^+ (z_1))^2}}{\bar{\mathcal{B}}^+} (\bar{\mathcal{B}}^+{\sqrt{c^2 - (\bar{u}^+ (z_1))^2}} - c^2) \\
	= & 
	\frac{2 a_2}{\gamma \bar{K}^+ \bar{\mathcal{B}}^+ a_1} \cdot \frac{1}{z_1 \bar{u}^+ (z_1) } \left( \bar{\mathcal{B}}^+  (\bar{u}^+ (z_1))^2 + \frac{c^2 (\bar{\mathcal{B}}^+{\sqrt{c^2 - (\bar{u}^+ (z_1))^2}} - c^2)}{\sqrt{c^2 - (\bar{u}^+ (z_1))^2}}  \right) \\
	& 
	+ \frac{2 a_2}{\gamma \bar{K}^+ \bar{\mathcal{B}}^+ a_1} \cdot \frac{\sqrt{c^2 - (\bar{u}^+ (z_1))^2}}{z_1 \bar{u}^+ (z_1) } \cdot
	\left(\bar{\mathcal{B}}^+{\sqrt{c^2 - (\bar{u}^+ (z_1))^2}} - c^2 \right) \\
	& \quad
	\times \left( 1 + \frac{c^2}{1 - \bar{\mathcal{M}}^2 (z_1)} \left( \frac{2 \bar{\mathcal{M}}^2 (z_1)}{c^2 - (\bar{u}^+ (z_1))^2} + \frac{(\gamma - 1) c^4 (\bar{u}^+ (z_1))^4}{\bar{\mathcal{B}}^+ \bar{c}_s^4 (z_1) (c^2 - (\bar{u}^+ (z_1))^2)^{5/2} } \right) \right) > 0.
\end{align*}

It is crucial to derive the oblique boundary condition for the potential $\psi$ on the boundary $\{(r_s,z')\mid z'\in E\}$ by solving the Poisson equation with homogeneous Neumann boundary condition from the first and fourth boundary conditions in \eqref{psi}.
\begin{lemma}\label{oblique}({\bf The oblique boundary condition on the shock front.})
	On the shock front $\{(r_s, z')\mid z'\in E\}$, there exists a unique $C^{2,\alpha}(\overline{E})$ function $q_s(y')$ such that
	\begin{equation*}
		\partial_{z_1}\psi(r_s,z')-a_4 \psi(r_s,z')= q_s(z'),
	\end{equation*}
	where $q_s(z')$ satisfies the Poisson equation with Neumann boundary conditions
	\begin{equation}\label{den38}\begin{cases}
			(\partial_{z_2}^2+\partial_{z_3}^2) q_s(z')=a_3 q_3(\hat{{\bf v}}(r_s,z'),\hat{v}_6(z')),\ \ &\text{in } E,\\
			(z_2\partial_{z_2}+z_3\partial_{z_3}) q_s(z')=0,\ &\forall |z'|=1,\\
			\iint_{E} q_s(z') dz'=0,
	\end{cases}\end{equation}
	and the following estimate holds
	\begin{equation}\label{den39}
		\|q_s\|_{C^{2,\alpha}(\overline{E})}\leq C_*\|q_3(\hat{{\bf v}}(r_s,z'),\hat{v}_6(z'))\|_{C^{\alpha}(\overline{E})}.
	\end{equation}
\end{lemma}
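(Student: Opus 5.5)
Set $Q(z')\coloneqq \partial_{z_1}\psi(r_s,z')-a_4\psi(r_s,z')$. By the second and fifth conditions in \eqref{psi}, $Q$ solves the Neumann problem $\Delta' Q=a_3 q_3$ in $E$, $(z_2\partial_{z_2}+z_3\partial_{z_3})Q=0$ on $|z'|=1$. Such a problem fixes $Q$ only up to an additive constant, and is solvable only if $\iint_E q_3\,dz'=0$. So the plan has three parts: verify this solvability condition, solve \eqref{den38} with the stated estimate, and then show how the free constant is absorbed so that $\partial_{z_1}\psi-a_4\psi=q_s$ holds exactly.

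\emph{Solvability.} I would integrate $q_3=q_1+2r_s\alpha_0 a_1\sum_j\partial_{z_j}(\dot v_j/(1+|z'|^2))(r_s,\cdot)$ over $E$ and apply the divergence theorem. The $\dot v_j$ term gives $\frac{2r_s\alpha_0a_1}{2}\int_{\partial E}(z_2\dot v_2+z_3\dot v_3)\,ds=0$ by the wall condition in \eqref{div-curl}. For $q_1=2\sum_i\partial_{z_i}(g_i/(1+|z'|^2))+\Delta' R_1$, the boundary integrals are $\int_{\partial E}(z_2g_2+z_3g_3)\,ds$, which vanishes by the third line of \eqref{com-N}, and $\int_{\partial E}\partial_a R_1\,ds$, which vanishes by \eqref{com-R}. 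Hence $\iint_E q_3=0$.

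\emph{Solving \eqref{den38}.} Since $q_3\in C^\alpha(\overline E)$ (it is built from second derivatives of $C^{2,\alpha}$ / $C^{3,\alpha}$ data), standard Neumann theory on the unit disk applies: Lax--Milgram in $H^1(E)$ modulo constants, then Schauder boundary regularity for the Neumann problem. This gives a unique $q_s\in C^{2,\alpha}(\overline E)$ with zero mean and the estimate \eqref{den39}. The zero mean also gives $\|q_s\|_{C^0}\le C\|\nabla q_s\|_{C^0}$, so no extra lower-order term appears in the bound.

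\emph{Identifying $Q$ with $q_s$.} Uniqueness for the Neumann problem gives $Q=q_s+\kappa$ for some constant $\kappa$, and this is the step I expect to be the main obstacle, since the statement asserts exact equality. The plan is to use the freedom in $\psi$: $\psi$ is defined only up to an additive constant, and the substitution $\psi\mapsto\psi+\kappa/a_4$ leaves $\tilde v_1,\tilde v_2,\tilde v_3$ and every equation in \eqref{psi} except this one unchanged (they involve only derivatives of $\psi$), while it shifts $Q$ by $-\kappa$. Since $a_4>0$, we may normalize $\psi$ so that $\kappa=0$, i.e.\ $\partial_{z_1}\psi-a_4\psi=q_s$. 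Equivalently, one fixes the additive constant of $\psi$ by requiring $\iint_E(\partial_{z_1}\psi-a_4\psi)(r_s,z')\,dz'=0$.

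Uniqueness of $q_s$ then follows from the mean-zero normalization in \eqref{den38}. The resulting oblique condition replaces the second and fifth boundary conditions in \eqref{psi}.
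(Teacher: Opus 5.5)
Your argument is correct and is essentially the one the paper intends. The paper states the lemma without proof, saying only that $q_s$ comes from solving the Neumann problem built from the second and fifth conditions in \eqref{psi}. You follow exactly that route: you reduce to the Poisson--Neumann problem for $\partial_{z_1}\psi-a_4\psi$ on $E$, check $\iint_E q_3\,dz'=0$ by the divergence theorem (using the slip condition on $\dot v_j$, the third line of \eqref{com-N}, and \eqref{com-R}), and absorb the leftover constant into the additive freedom of $\psi$ using $a_4>0$. Your explicit check of the solvability condition is the detail the paper leaves implicit, and it is what lets $q_s$ be computed from the data before $\psi$ is known, as the reformulation \eqref{psi-2} requires.
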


Then the problem \eqref{psi} can be rewritten as 
\begin{equation}\label{psi-2}
	\begin{cases}
		d_1(z_1) \partial_{z_1}^2 \psi
		+ \frac{(1+|z'|^2)^2}{4 z_1^2} \sum_{j=2}^3 \partial_{z_j}^2  \psi + \frac{2}{z_1} \partial_{z_1} \psi
		+ d_2(z_1) \partial_{z_1} \psi \\
		\qquad
		- \alpha_0 a_1 d_4(z_1) \partial_{z_1} \psi(r_s, z')
		= \tilde{H}_5, \quad \text{in } \mathcal{D}, \\
		\partial_{z_1} \psi(r_s, z')
		- a_4  \psi(r_s, z') 
		= q_s (z'), \quad \forall z' \in E, \\
		\partial_{z_1} \psi(r_2, z')  = q_4(z'), \quad \forall z' \in E, \\
		z_2 \partial_{z_2} \psi + z_3 \partial_{z_3} \psi = 0, \quad \forall (z_1, z') \in \Gamma_0,
	\end{cases}
\end{equation}
where $\tilde{H}_5 (z) = \tilde{H}_4 (z) + \frac{d_4 (z_1)}{a_3} q_s (z')$. Furthermore, direct calculations yield that
\begin{equation}\label{com-H5}\begin{aligned}
		& \partial_a \tilde{H}_5 (z_1, 1, \tau) = 0, \quad \forall (z_1, \tau) \in [r_s, r_2] \times \mathbb{T}_{2\pi}, \\
		& \partial_a q_i (1, \tau) = 0, \quad \forall \tau \in \mathbb{T}_{2\pi}, \, i = s, 4.
\end{aligned}\end{equation}
The  problem \eqref{psi-2} can be solved by analogy with the method in \cite[Proposition 3.8]{Weng25}.

\begin{proposition}\label{solvability}
	{\it Suppose that $(q_s,q_4)\in (C^{2,\alpha}(\overline{E}))^2$ and $\tilde{H}_5\in C^{1,\alpha}(\overline{\mathcal{D}})$ satisfy the compatibility conditions \eqref{com-H5}. Then there exists a unique solution $\psi\in C^{3,\alpha}(\overline{\mathcal{D}})$ to the problem \eqref{psi-2} with the estimate
		\begin{equation}\label{est-psi}
			\|\psi\|_{C^{3,\alpha}(\overline{\mathcal{D}})}\leq C_*(\|\tilde{H}_5\|_{C^{1,\alpha}(\overline{\mathcal{D}})}+\sum_{j=s,4}\|q_j\|_{C^{2,\alpha}(\overline{E})}),
		\end{equation}
		where the constant $C_*$ depends only on the coefficients $d_1,d_4,a_3,a_4$ and thus depends only the background solution.
}\end{proposition}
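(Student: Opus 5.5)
The problem \eqref{psi-2} is a second-order elliptic equation for $\psi$ in the cylinder $\mathcal{D}$. It has a nonlocal lower-order term $-\alpha_0 a_1 d_4(z_1)\partial_{z_1}\psi(r_s,z')$, a Robin-type condition on $\Gamma_s$ with the "good" sign ($a_4>0$, outward normal $-\partial_{z_1}$), a Neumann condition on $\Gamma_2$, and a homogeneous Neumann (slip) condition on the lateral wall $\Gamma_0$. Ellipticity holds since $d_1=1-\bar{\mathcal M}^2>0$ in the subsonic region. I would follow the scheme of \cite[Proposition 3.8]{Weng25}: first prove uniqueness, then obtain existence by combining a Fredholm alternative with an a priori estimate, and finally upgrade regularity using the compatibility conditions \eqref{com-H5}.

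\textbf{Step 1: uniqueness.} Set $\tilde H_5=q_s=q_4=0$. Since $\Gamma_0$ only involves angular derivatives and the coefficients depend only on $z_1$ and $|z'|$, I would expand $\psi$ in the eigenfunctions of the Neumann problem on $E$ for the operator $L'=(1+|z'|^2)^2\sum_{j}\partial_{z_j}^2$. Equivalently, use the spherical harmonic structure on the hemisphere with Neumann condition at the equator; this operator is just the Laplace--Beltrami operator in stereographic coordinates, up to the factor $1/z_1^2$. Write $\psi=\sum_k \psi_k(z_1)e_k(z')$ with $-L'e_k=\lambda_k e_k$ and $\lambda_k\ge0$. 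Each mode satisfies a nonlocal ODE:
\[
d_1\psi_k''+\Big(\frac{2}{z_1}+d_2\Big)\psi_k'-\frac{\lambda_k}{4z_1^2}\psi_k-\alpha_0a_1d_4\psi_k'(r_s)=0,
\qquad \psi_k'(r_s)=a_4\psi_k(r_s),\qquad \psi_k'(r_2)=0.
\]
Let $\phi_k=\psi_k'$ in the mode $k=0$. Then $\phi_0$ solves a first-order linear ODE with source $\alpha_0a_1d_4\phi_0(r_s)$ and $\phi_0(r_2)=0$. The positivity of $d_4$, $\alpha_0$, $a_1$ and $d_1$, integrated with the integrating factor, forces $\phi_0(r_s)=0$, hence $\phi_0\equiv0$ and $\psi_0(r_s)=0$, so $\psi_0\equiv0$. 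For $k\ge1$ I would use a comparison argument: factor the nonlocal term as a constant $\beta=\psi_k'(r_s)=a_4\psi_k(r_s)$ and write $\psi_k=\beta(\Phi_k+\cdots)$ through the Green function of the local operator. I would then show by a maximum-principle/sign argument that the resulting scalar equation forces $\beta=0$, using $d_4>0$ and $\lambda_k>0$. This is the step I expect to be the main obstacle: ruling out nontrivial kernels for all modes uniformly. If the sign argument fails for a mode, I would fall back on showing that the scalar determinant is a monotone function of $\lambda_k$ with a nonzero limit.

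\textbf{Step 2: existence and estimate.} Treat the nonlocal term as a compact perturbation. The map $\psi\mapsto d_4(z_1)\partial_{z_1}\psi(r_s,z')$ is compact from $C^{2,\alpha}$ to $C^{\alpha}$, since the trace loses no tangential regularity and the map is compact into lower Hölder spaces. The local oblique problem has good-sign Robin/Neumann conditions on a domain with edges $\Gamma_0\cap\Gamma_s$ and $\Gamma_0\cap\Gamma_2$. It is uniquely solvable in $C^{2,\alpha}$ by standard theory after reflecting across the wall; the zero-order coefficient vanishes, but $a_4>0$ on $\Gamma_s$ gives uniqueness. Uniqueness from Step 1 together with the Fredholm alternative then yields existence, and a contradiction/compactness argument gives the bound \eqref{est-psi} at the $C^{2,\alpha}$ level.

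\textbf{Step 3: $C^{3,\alpha}$ regularity up to the edges.} Pass to the coordinates $(z_1,a,\tau)$ near $\Gamma_0$. The conditions $\partial_a\tilde H_5=0$ and $\partial_a q_s=\partial_a q_4=0$ on $a=1$ from \eqref{com-H5}, together with $\partial_a\psi=0$ on the wall, allow an even reflection across $a=1$; after a conformal change one may reflect across the equator of the sphere. This reflection preserves the $C^{1,\alpha}$ regularity of the data. Interior and flat-boundary Schauder estimates for the Robin/Neumann conditions on $\Gamma_s$ and $\Gamma_2$ in the reflected domain then give $\psi\in C^{3,\alpha}(\overline{\mathcal D})$ with \eqref{est-psi}. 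The compatibility conditions at the corners hold automatically, since $q_s$ and $q_4$ satisfy the Neumann condition at $|z'|=1$.
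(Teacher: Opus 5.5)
Your overall route is sound. The paper gives no argument of its own here; it only defers to \cite[Proposition 3.8]{Weng25}. Your plan is a legitimate way to prove the statement:
\begin{itemize}
\item use eigenmodes only to show that the kernel is trivial;
\item get existence and the $C^{2,\alpha}$ bound from the Fredholm alternative;
\item get $C^{3,\alpha}$ from an even reflection across the equator.
\end{itemize}
As written, however, there is one real hole. Uniqueness for the modes $k\ge1$ is only announced, and both existence and \eqref{est-psi} come from Fredholm, so everything rests on it. Your $k=0$ argument is correct. You also need no uniformity in $k$, and no fallback about monotonicity in $\lambda_k$: the compactness argument supplies the constant, so it suffices that each single mode be trivial.

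The missing step closes by exactly the sign argument you anticipated. Put $c=\alpha_0a_1d_4>0$, $b=\frac{2}{z_1}+d_2$ and $\beta=\psi_k'(r_s)=a_4\psi_k(r_s)$. For $\lambda_k>0$ the local problem
\begin{equation*}
d_1\Phi''+b\Phi'-\frac{\lambda_k}{4z_1^2}\Phi=c,\qquad \Phi'(r_s)=1,\qquad \Phi'(r_2)=0,
\end{equation*}
has a unique solution $\Phi_k$. Indeed, the homogeneous Neumann problem is trivial: at any extremum, interior or at an endpoint, $\Phi'=0$, and the equation forces $\Phi\le0$ at a maximum and $\Phi\ge0$ at a minimum. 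Hence $\psi_k=\beta\Phi_k$.

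Since $\Phi_k'(r_s)=1>0$, $\max\Phi_k$ is attained at some $r_*\in(r_s,r_2]$ with $\Phi_k'(r_*)=0$ and $\Phi_k''(r_*)\le0$. At $r_*$ the equation gives $\frac{\lambda_k}{4r_*^2}\Phi_k(r_*)=d_1\Phi_k''(r_*)-c(r_*)<0$. Therefore $\Phi_k(r_s)<\Phi_k(r_*)<0$. The Robin condition now reads $\beta=a_4\beta\Phi_k(r_s)$, and since $1-a_4\Phi_k(r_s)>1$, this forces $\beta=0$ and hence $\psi_k\equiv0$.

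A side remark: substituting the oblique condition into \eqref{psi} shows that the nonlocal term in \eqref{psi-2} should read $-\alpha_0a_1d_4\psi(r_s,z')$. For homogeneous data this is your term divided by $a_4>0$, so the argument is unchanged.

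For Steps 2--3, the reflection you allude to is $z'\mapsto z'/|z'|^2$, i.e.\ $a\mapsto 1/a$, which is the map $x_1\mapsto-x_1$.
\begin{itemize}
\item It leaves $(1+|z'|^2)^2\Delta_{z'}$ invariant.
\item It is a flat even reflection in $s=\ln a$, so \eqref{com-H5} is precisely what keeps the even extensions of $\tilde H_5$, $q_s$, $q_4$ in $C^{1,\alpha}$, $C^{2,\alpha}$, $C^{2,\alpha}$ respectively.
\item The condition $\partial_a\psi=0$ on $\Gamma_0$ does the same for the nonlocal term.
\end{itemize}
After this, the problem lives on a full spherical shell with no edges.
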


Thus $\tilde{v}_1(z)=\partial_{z_1}\psi(z)-\frac{d_3 (z_1)}{a_3} \partial_{z_1}\psi(r_s,z'), \tilde{v}_j(z)=\frac{1+|z'|^2}{2z_1}\partial_{z_j}\psi(z), j=2,3$ would solve the problem \eqref{tilde-v}. Furthermore, one can derive the following compatibility conditions
\begin{equation*}\begin{cases}
		\tilde{v}_a(z_1,1,\tau)=\partial_a \tilde{v}_1(z_1,1,\tau)=\partial_a \tilde{v}_{\tau}(z_1,1,\tau)=0,\\
		(\partial_a^2 \tilde{v}_a+\partial_a \tilde{v}_a)(z_1,1,\tau)=0,
\end{cases} \quad \forall (z_1,\tau)\in [r_s,r_2]\times \mathbb{T}_{2\pi},
\end{equation*}
where $\tilde{v}_a=\cos\tau \,\tilde{v}_2+ \sin \tau \, \tilde{v}_3$ and $\tilde{v}_{\tau}=-\sin\tau \, \tilde{v}_2+ \cos\tau \, \tilde{v}_3$.

Then
\begin{align*}
	&v_1(z_1,z')=\dot{v}_1(z)+\tilde{v}_1(z)=\dot{v}_1(z)+\partial_{z_1}\psi(z)-\frac{d_3 (z_1)}{a_3} \partial_{z_1}\psi(r_s,z'),\\
	&v_j(z_1,z')=\dot{v}_j(z)+\tilde{v}_j(z)=\dot{v}_j(z)+\frac{1+|z'|^2}{2z_1}\partial_{z_j}\psi(z),\ j=2,3,
\end{align*}
will solve the problem \eqref{deform-curl} and satisfy the estimate
\begin{equation}\label{est-v}\begin{aligned}
		\sum_{j=1}^3\|v_j\|_{C^{2,\alpha}(\overline{\mathcal{D}})}&\leq C_*(\sum_{j=1}^3\|\dot{v}_j\|_{C^{2,\alpha}(\overline{\mathcal{D}})}+\|\nabla \psi\|_{C^{2,\alpha}(\overline{\mathcal{D}})}+ \|\partial_{z_1}\psi(r_s,z')\|_{C^{2,\alpha}(\overline{E})})\\
		&\leq C_*(\epsilon +C_*(\epsilon\|(\hat{{\bf v}}, \hat{v}_6)\|_{\Xi}+\|(\hat{{\bf v}}, \hat{v}_6)\|_{\Xi}^2)\leq C_*(\epsilon+\epsilon \sigma_0 +\sigma_0^2),
	\end{aligned}
\end{equation}
and the compatibility conditions
\begin{equation*}\begin{cases}
		v_a(z_1,1,\tau)=\partial_a v_1(z_1,1,\tau)=\partial_a v_{\tau}(z_1,1,\tau)=0,\\
		(\partial_a^2 v_a+\partial_a v_a)(z_1,1,\tau)=0,
\end{cases}\quad\forall (z_1,\tau)\in [r_s,r_2]\times \mathbb{T}_{2\pi}, \end{equation*}
where $v_a=\cos\tau \, v_2+ \sin \tau \, v_3$ and $v_{\tau}=-\sin\tau \, v_2+ \cos\tau \, v_3$.

{\bf (v) } Once $v_1, v_2, v_3$ are obtained, the function $v_4$ is uniquely determined by \eqref{v4-exp-1}:
\begin{equation*}
	v_4(z_1, z') = \frac{a_2}{a_1} v_1(r_s, z') + R_4(\hat{\mathbf{v}}(r_s, \zeta_2(z), \zeta_3(z)), \hat{v}_6(\zeta_2(z), \zeta_3(z))).
\end{equation*}
There also holds the estimate
\begin{equation}\label{est-v4}
	\|v_4\|_{C^{2,\alpha}(\overline{\mathcal{D}})} \leq C_* \|v_1(r_s, \cdot)\|_{C^{2,\alpha}(\overline{E})} + C_* \bigl( \epsilon \|(\hat{\mathbf{v}}, \hat{v}_6)\|_{\Xi} + \|(\hat{\mathbf{v}}, \hat{v}_6)\|_{\Xi}^2 \bigr) 
	\leq C_* (\epsilon \sigma_0 + \sigma_0^2),
\end{equation}
and the compatibility condition
\begin{equation*}
	\partial_a v_4(z_1, 1, \tau) = \frac{a_2}{a_1} \partial_a v_1(r_s, 1, \tau) = 0, \quad \forall (z_1, \tau) \in \Gamma_0.
\end{equation*}

Finally, the shock front is uniquely determined by
\begin{equation}\label{shock50}
	v_6(z') = \frac{1}{a_1} v_1(r_s, z') - \frac{1}{a_1} R_1(\hat{\mathbf{v}}(r_s, z'), \hat{v}_6(z')),
\end{equation}
which implies that $v_6 \in C^{2,\alpha}(\overline{E})$. And the compatibility condition also holds
\begin{equation*}
	\partial_a v_6(1, \tau) = \frac{1}{a_1} \partial_a v_1(r_s, 1, \tau) - \frac{1}{a_1} \partial_a \{R_1(\hat{\mathbf{v}}(r_s, \cdot), \hat{v}_6)\}(1, \tau) = 0, \quad \forall \tau \in \mathbb{T}_{2\pi}.
\end{equation*}
We now improve the regularity of $v_6$ to be $C^{3,\alpha}(\overline{E})$. To this end, define
\begin{equation*}
	\begin{aligned}
		F_i(z')  \coloneqq & \partial_{z_i} v_1(r_s, z') - \frac{2 \alpha_0 r_s v_i(r_s, z')}{1 + |z'|^2} - \frac{2 a_1 g_i(\hat{\mathbf{v}}(r_s, z'), \hat{v}_6(z'))}{1 + |z'|^2} \\
		& - \partial_{z_i} \{R_1(\hat{\mathbf{v}}(r_s, z'), \hat{v}_6(z'))\}, \quad i = 2, 3.
	\end{aligned}
\end{equation*}
Then it follows from the first boundary condition in \eqref{bdry-6} and the boundary data in \eqref{omega1} that
\begin{equation*}
	\begin{cases}
		\partial_{z_2} F_2 + \partial_{z_3} F_3 = 0, & \text{in } E, \\
		\partial_{z_2} F_3 - \partial_{z_3} F_2 = 0, & \text{in } E, \\
		z_2 F_2 + z_3 F_3 = 0, & \text{on } \partial E.
	\end{cases}
\end{equation*}
Thus by Lemma \ref{equi0}, $F_2 = F_3 \equiv 0$ in $E$. Using the equation \eqref{shock50}, there holds
\begin{equation*}
	\begin{cases}
		\partial_{z_2} v_6(z') = \dfrac{2 \alpha_0 r_s v_2(r_s, z')}{1 + |z'|^2} + \dfrac{2 g_2(\hat{\mathbf{v}}(r_s, z'), \hat{v}_6(z'))}{1 + |z'|^2}, \\
		\partial_{z_3} v_6(z') = \dfrac{2 \alpha_0 r_s v_3(r_s, z')}{1 + |z'|^2} + \dfrac{2 g_3(\hat{\mathbf{v}}(r_s, z'), \hat{v}_6(z'))}{1 + |z'|^2}, 
	\end{cases} \quad  \text{in } E.
\end{equation*}
Therefore $v_6 \in C^{3,\alpha}(\overline{E})$ with the estimate
\begin{equation}\label{est-v6}
	\begin{aligned}
		\|v_6\|_{C^{3,\alpha}(\overline{E})} &\leq C_* \|v_1(r_s, \cdot)\|_{C^{2,\alpha}(\overline{E})} + C_* \|R_1(\hat{\mathbf{v}}(r_s, z'), \hat{v}_6(z'))\|_{C^{2,\alpha}(\overline{E})}  \\
		&\quad + C_* \sum_{j=2}^3 \bigl( \|v_j(r_s, \cdot)\|_{C^{2,\alpha}(\overline{E})} + \|g_j(\hat{\mathbf{v}}(r_s, z'), \hat{v}_6(z'))\|_{C^{2,\alpha}(\overline{E})} \bigr) \\
		&\leq C_* \bigl( \epsilon + \epsilon \|(\hat{\mathbf{v}}, \hat{v}_6)\|_{\Xi} + \|(\hat{\mathbf{v}}, \hat{v}_6)\|_{\Xi}^2 \bigr)
		\leq C_* (\epsilon + \epsilon \sigma_0 + \sigma_0^2).
	\end{aligned}
\end{equation}

Combining the estimates \eqref{est-v5},  \eqref{est-v}, \eqref{est-v4} and \eqref{est-v6}, one concludes that
\begin{equation*}
	\|(\mathbf{v}, v_6)\|_{\Xi} = \sum_{j=1}^5 \|v_j\|_{C^{2,\alpha}(\overline{\mathcal{D}})} + \|v_6\|_{C^{3,\alpha}(\overline{E})} \leq C_* (\epsilon + \epsilon \sigma_0 + \sigma_0^2) \leq C_* (\epsilon + \sigma_0^2).
\end{equation*}
Choose $\sigma_0 = \sqrt{\epsilon}$ and let $\epsilon < \epsilon_0 = \frac{1}{4 C_*^2}$. Then $\|(\mathbf{v}, v_6)\|_{\Xi} \leq 2 C_* \epsilon \leq \sigma_0$, thus $(\mathbf{v}, v_6) \in \Xi$. We now can define the operator $\mathcal{P} : (\hat{\mathbf{v}}, \hat{v}_6) \mapsto (\mathbf{v}, v_6)$ which maps $\Xi$ to itself.

{\bf (vi) } It remains to show that $\mathcal{P}$ is a contraction in the weak norm
\begin{equation*}
	\|(\mathbf{v}, v_6)\|_w \coloneqq \sum_{j=1}^5 \|v_j\|_{C^{1,\alpha}(\overline{\mathcal{D}})} + \|v_6\|_{C^{2,\alpha}(\overline{E})}.
\end{equation*}
This can be done by taking the difference for two solutions, we omit the details. Since the mapping $\mathcal{P}$ is a contraction operator in the weak norm $\|\cdot\|_w$, there exists a unique point $(\mathbf{v}, v_6) \in \Xi$ such that $\mathcal{P}(\mathbf{v}, v_6) = (\mathbf{v}, v_6)$. It remains to prove that the auxiliary function $\Pi$ associated with the fixed point $(\mathbf{v}, v_6)$ in solving the problem \eqref{deform-curl}, is automatically $\Pi \equiv 0$ in $\mathcal{D}$. Thanks to the definitions of $\tilde{H}_j(\mathbf{v}, v_6)$ for $j = 1, 2, 3$, one may infer from \eqref{deform-curl} that
\begin{equation}\label{pi0}
	\begin{cases}
		-\partial_{z_1} \Pi = \dfrac{1 + |z'|^2}{2 D_0^{v_6}} (D_2^{v_6} v_3 - D_3^{v_6} v_2) + \dfrac{1}{D_0^{v_6}} (z_3 v_2 - z_2 v_3) - \tilde{\omega}_1, \\[12pt]
		-\dfrac{1 + |z'|^2}{2 z_1} \partial_{z_2} \Pi = \dfrac{1 + |z'|^2}{2 D_0^{v_6}} D_3^{v_6} v_1 - D_1^{v_6} v_3 - \dfrac{v_3}{D_0^{v_6}} - \dfrac{v_2 \tilde{\omega}_1}{\bar{u}^+(D_0^{v_6}) + v_1} \\
		\quad - \dfrac{(1 + |z'|^2) D_3^{v_6} v_5}{2 D_0^{v_6} (\bar{u}^+(D_0^{v_6}) + v_1)} + \dfrac{\bar{\mathcal{B}}^+ + v_5 - \frac{1}{2}(\bar{u}^+(D_0^{v_6}) + v_1)^2 - \frac{1}{2}(v_2^2 + v_3^2)}{\gamma(\bar{K}^+ + v_4)} \dfrac{(1 + |z'|^2) D_3^{v_6} v_4}{2 D_0^{v_6} (\bar{u}^+(D_0^{v_6}) + v_1)}, \\[12pt]
		-\dfrac{1 + |z'|^2}{2 z_1} \partial_{z_3} \Pi = D_1^{v_6} v_2 + \dfrac{v_2}{D_0^{v_6}} - \dfrac{1 + |z'|^2}{2 D_0^{v_6}} D_2^{v_6} v_1 - \dfrac{v_3 \tilde{\omega}_1}{\bar{u}^+(D_0^{v_6}) + v_1} \\
		\quad + \dfrac{(1 + |z'|^2) D_2^{v_6} v_5}{2 D_0^{v_6} (\bar{u}^+(D_0^{v_6}) + v_1)} - \dfrac{\bar{\mathcal{B}}^+ + v_5 - \frac{1}{2}(\bar{u}^+(D_0^{v_6}) + v_1)^2 - \frac{1}{2}(v_2^2 + v_3^2)}{\gamma(\bar{K}^+ + v_4)} \dfrac{(1 + |z'|^2) D_2^{v_6} v_4}{2 D_0^{v_6} (\bar{u}^+(D_0^{v_6}) + v_1)}.
	\end{cases}
\end{equation}
Since the vorticity $\tilde{\omega}_1$ satisfies the equation \eqref{omega-1} and the following commutator relations hold
\begin{equation*}
	D_1^{v_6} D_2^{v_6} = D_2^{v_6} D_1^{v_6}, \quad D_2^{v_6} D_3^{v_6} = D_3^{v_6} D_2^{v_6}, \quad D_1^{v_6} D_3^{v_6} = D_3^{v_6} D_1^{v_6},
\end{equation*}
one can conclude from \eqref{pi0} that in $\mathcal{D}$
\begin{equation*}
	D_1^{v_6}(\partial_{z_1} \Pi) + \frac{1 + |z'|^2}{4 z_1 D_0^{v_6}} \sum_{j=2}^3 D_j^{v_6}((1 + |z'|^2) \partial_{z_j} \Pi) + \frac{2 \partial_{z_1} \Pi}{D_0^{v_6}} - \frac{1 + |z'|^2}{2 z_1 D_0^{v_6}} \sum_{j=2}^3 z_j \partial_{z_j} \Pi = 0.
\end{equation*}
Since $\|v_6\|_{C^{3,\alpha}(\overline{E})} \leq \sigma_0$, where $\sigma_0$ is sufficiently small, thus $\Pi$ satisfies a second order uniformly elliptic equation without zeroth order term. Thanks to the homogeneous mixed boundary conditions for $\Pi$ on $\partial \mathcal{D}$ in \eqref{deform-curl}, it follows directly from the maximum principle that $\Pi \equiv 0$ in $\mathcal{D}$. Thus $(\mathbf{v}, v_6)$ is the desired solution. The proof of Theorem \ref{existence} is completed.

\section{Appendix}\label{appendix}
In this section, we will give a proof of Proposition~\ref{1d-existence}.
\begin{proof}
	
	\textbf{Step 1.} Given the incoming supersonic flow $(\bar{u}_0^-, \bar{n}_0^-, \bar{S}_0^-)$, then there exists a unique smooth supersonic solution $(\bar{u}^-(y_1), \bar{n}^-(y_1), \bar{S}_0^-)$ on the interval $[r_1, r_2]$.
	
	In fact, it follows from \eqref{rel-1d-euler} that 
	\begin{equation}\label{rel-const}
		\begin{cases}
			J (\bar{u}^{\pm} , \bar{n}^{\pm}, y_1) \equiv y_1^2 \frac{\bar{n}^{\pm} (y_1) \bar{u}^{\pm} (y_1)}{\sqrt{c^2 - (\bar{u}^{\pm} (y_1))^2}} - J_0 = 0, \\
			\mathcal{B} (\bar{u}^{\pm} , \bar{n}^{\pm}, y_1) \equiv \frac{\bar{p}^{\pm}(y_1) + \bar{\rho}^{\pm} (y_1) c^2}{\bar{n}^{\pm} (y_1) \sqrt{c^2 - (\bar{u}^{\pm} (y_1))^2} } - \mathcal{B}_0 = 0,
		\end{cases}
	\end{equation}
	with $J_0 = r_1^2  \frac{\bar{n}^- (r_1) \bar{u}^- (r_1)}{\sqrt{c^2 - (\bar{u}^- (r_1))^2}}$ and $\mathcal{B}_0 = \frac{\bar{p}^-(r_1) + \bar{\rho}^- (r_1) c^2}{\bar{n}^- (r_1) \sqrt{c^2 - (\bar{u}^- (r_1))^2} }$. Moreover, 
	\begin{equation}\label{rel-sound}
		c_s^2(\bar{n}, \bar{S})  = \frac{\frac{\partial \bar{p}}{\partial \bar{n}} }{\frac{{\partial \bar{\rho}}}{\partial \bar{n}}} = \frac{(\gamma - 1) c^2 (\bar{\mathcal{B}} \sqrt{c^2 - \bar{u}^2 } - c^2)}{\bar{\mathcal{B}} \sqrt{c^2 - \bar{u}^2 }}, 
	\end{equation}
	then 
	\begin{equation}\label{derivative}
		\frac{d}{d y_1} ((\bar{u}^- (y_1))^2 - c_s^2(\bar{n}_0^-(y_1), \bar{S}_0^-)) =  \left( 2 + \frac{(\gamma - 1)c^4}{\bar{\mathcal{B}} (c^2 - (\bar{u}^- (y_1))^2)^{3/2}} \right) \bar{u}^- (y_1) (\bar{u}^-)' (y_1).
	\end{equation}
	In addition, we have 
	\begin{equation}\label{deriv-u}
		\frac{d \bar{u}^-}{d y_1} = - \frac{2 c_s^2 (\bar{n}_0^-, \bar{S}_0^-) \bar{u}^- (c^2 - (\bar{u}^-)^2)}{y_1 c^2 (c_s^2 (\bar{n}_0^-, \bar{S}_0^-) - (\bar{u}^-)^2) }.
	\end{equation}
	Together with \eqref{derivative}, one gets
	\begin{equation}\label{positive}
		(\bar{u}^- (y_1))^2 - c_s^2(\bar{n}_0^-(y_1), \bar{S}_0^-) \geq (\bar{u}^- (r_1))^2 - c_s^2(\bar{n}_0^-(r_1), \bar{S}_0^-) > 0, \quad  y_1 \geq r_1.
	\end{equation}
	
	Since
	\begin{equation*}
		\det \left( \frac{\partial (J, \mathcal{B})}{\partial (\bar{u}^-, \bar{n}^-)} \right) = \frac{y_1^2 (c_s^2 (\bar{n}^-, \bar{S}^-) - (\bar{u}^-)^2)}{\bar{\mathcal{B}} (c^2 - (\bar{u}^-)^2)},
	\end{equation*}
	and
	\begin{equation*}
		\det \left( \frac{\partial (J, \mathcal{B})}{\partial (\bar{u}^-, \bar{n}^-)} \right)\bigg|_{\bar{u}^- (r_1), \bar{n}^-(r_1), r_1} < 0.
	\end{equation*}
	Combining this with the implicit function theorem and \eqref{positive} leads to the existence of a unique supersonic solution $(\bar{u}^-(y_1), \bar{n}^-(y_1), \bar{S}_0^-)$ on $[r_1, r_2]$.
	
	\textbf{Step 2.} Suppose that a shock front is located at $r_s \in (r_1, r_2)$, then the subsonic state $(\bar{u}^+ (r_s), \bar{n}^+(r_s), \bar{S}_0^+)$ can be uniquely determined by the supersonic state $(\bar{u}^-(r_s), \bar{n}^-(r_s), \bar{S}_0^-)$ via the Rankine-Hugoniot conditions and the entropy condition \eqref{RH-jump}.
	
	In fact, from \eqref{RH-jump}, we have
	\begin{equation}\label{RH-const}
		\begin{cases}
			\frac{\bar{n}^+ (r_s) \bar{u}^+ (r_s)}{\sqrt{c^2 - (\bar{u}^+ (r_s))^2}} = \frac{\bar{n}^- (r_s) \bar{u}^- (r_s)}{\sqrt{c^2 - (\bar{u}^- (r_s))^2}} \equiv \bar{J} , \\
			\frac{\bar{p}^+ (r_s) + \bar{\rho}^+ (r_s) c^2}{c^2 - (\bar{u}^+ (r_s))^2} \, (\bar{u}^+ (r_s))^2 + \bar{p}^+ (r_s) = \frac{\bar{p}^- (r_s) + \bar{\rho}^- (r_s) c^2}{c^2 - (\bar{u}^- (r_s))^2} \, (\bar{u}^- (r_s))^2 + \bar{p}^- (r_s) \equiv \bar{\Phi} , \\
			\bar{\mathcal{B}}^+ (r_s) = \bar{\mathcal{B}}^- (r_s) \equiv \bar{\mathcal{B}}.
		\end{cases}
	\end{equation}
	Then $ \bar{p}^+ (r_s) - \bar{p}^- (r_s) = \bar{\mathcal{B}} \bar{J} (\bar{u}^- (r_s) - \bar{u}^+ (r_s))$, and the physical entropy condition implies that $\bar{u}^- (r_s) > \bar{u}^+ (r_s)$.
	
	Let
	\begin{equation*}
		g (u) = \frac{(\gamma - 1)\bar{J} }{\gamma} \frac{\bar{\mathcal{B}} (c^2 - u^2) - c^2 \sqrt{c^2 - u^2}}{u}.
	\end{equation*}
	Then, by \eqref{rho}, \eqref{eq:internal-energy}, \eqref{rel-bernoulli} and \eqref{RH-const}, we obtain $\bar{p}^{\pm} (r_s) = g (\bar{u}^{\pm} (r_s))$. 
	
	Define $\Phi (u) = \bar{\mathcal{B}} \bar{J}  u + g (u) - \bar{\Phi} (r_s)$. Then $\Phi (\bar{u}^+ (r_s)) = \Phi (\bar{u}^- (r_s)) = 0$ and 
	\begin{equation*}
		\Phi'(u) = \bar{\mathcal{B}} \bar{J}  + g' (u) = \frac{\bar{\mathcal{B}} \bar{J}}{\gamma u^2} (u^2 - \bar{c}_s^2).
	\end{equation*}
	Consequently, we can prove the existence of a unique subsonic state $(\bar{u}^+ (r_s), \bar{n}^+(r_s), \bar{S}_0^+)$.

	\textbf{Step 3.} Taking the subsonic state $(\bar{u}^+(r_s), \bar{n}^+(r_s), \bar{S}^+)$ as the initial condition, a similar argument to \textbf{Step 1} yields the existence of a unique smooth subsonic solution $(\bar{u}^+(y_1), \bar{n}^+(y_1), \bar{S}^+)$ on $[r_s, r_2]$.
	
	\textbf{Step 4.} We proceed to establish the monotonic relationship between the shock position $r_s$ and the exit pressure $p_e$.
	
	Form \eqref{rel-const}, one has
	\begin{equation}\label{rel-deriv}
		\begin{cases}
			\frac{\mathrm{d}}{\mathrm{d} p_e} \left( \frac{r_s^2 \bar{n}^+ (r_s) \bar{u}^+ (r_s)}{\sqrt{c^2 - (\bar{u}^+ (r_s))^2}} \right) = 0,\\
			\frac{\d}{\d p_e} \left( \frac{\bar{p}^+ (r_s) + \bar{\rho}^+ (r_s) c^2}{\bar{n}^+(r_s) \sqrt{c^2 - (\bar{u}^+ (r_s))^2} } \right) = 0.
		\end{cases}
	\end{equation}
	The first law of the thermodynamics states
	\begin{equation}\label{law}
		T \d S = \d e + p \d (1/n).
	\end{equation}
	Then, a direct calculation yields
	\begin{equation}\label{derivation-1}
		\begin{cases}
			2 \bar{n}^+(r_s) \bar{u}^+ (r_s) \frac{\d r_s}{ \d p_e} + \frac{r_s \bar{n}^+(r_s) c^2}{ c^2 - (\bar{u}^+(r_s))^2} \frac{\d \bar{u}^+ (r_s)}{ \d p_e} + r_s \bar{u}^+(r_s) \frac{\bar{n}^+(r_s)}{ \d p_e} = 0,\\
			\bar{T}^+ (r_s) \frac{\d \bar{S}^+ (r_s)}{ \d p_e} + \frac{1}{\bar{n}^+ (r_s)} \frac{\d \bar{p}^+(r_s)}{ \d p_e} + \frac{\bar{u}^+ (r_s) (\bar{p}^+ (r_s) + \bar{\rho}^+ (r_s) c^2) }{\bar{n}^+(r_s) (c^2 - (\bar{u}^+ (r_s))^2)} \frac{\d \bar{u}^+ (r_s)}{ \d p_e} = 0.
		\end{cases}
	\end{equation}
	
	Since 
	\begin{equation*}
		\frac{\d}{\d p_e} \left(\frac{\bar{p} + \bar{\rho} c^2}{c^2 - \bar{u}^2} \bar{u}^2 \right) \\
		= 
		\frac{\bar{u} (\bar{p} + \bar{\rho} c^2)}{c^2 - \bar{u}^2} \frac{\d \bar{u}}{\d p_e} - \frac{\bar{p} + \bar{\rho} c^2}{c^2 - \bar{u}^2} \bar{u}^2 \frac{2 \d r_s}{r_s \d p_e},
	\end{equation*}
	then 
	\begin{equation}\label{derivative-2}\begin{aligned}
			& \frac{\d}{\d p_e} \left[ \frac{\bar{p} + \bar{\rho} c^2}{c^2 - \bar{u}^2} \bar{u}^2\right] (r_s) 
			= \frac{\bar{u}^+ (r_s) (\bar{p}^+ (r_s) + \bar{\rho}^+ (r_s) c^2)}{c^2 - (\bar{u}^+ (r_s))^2} \frac{\d \bar{u}^+ (r_s)}{\d p_e} \\
			& \quad
			- 
			\frac{\bar{u}^- (r_s) (\bar{p}^- (r_s) + \bar{\rho}^- (r_s) c^2)}{c^2 - (\bar{u}^- (r_s))^2} \frac{\d \bar{u}^- (r_s)}{\d p_e}
			- 
			\left[ \frac{\bar{p} + \bar{\rho} c^2}{c^2 - \bar{u}^2} \bar{u}^2\right](r_s) \frac{2 \d r_s}{r_s \d p_e}.
		\end{aligned}
	\end{equation}
	Moreover, using the second equation in \eqref{derivation-1}, we obtain
	\begin{equation}\label{derivation-p}\begin{aligned}
			& \frac{\d}{\d p_e} \left[ \bar{p} \right] (r_s) = - \frac{\bar{u}^+ (r_s) (\bar{p}^+ (r_s) + \bar{\rho}^+ (r_s) c^2)}{c^2 - (\bar{u}^+ (r_s))^2} \frac{\d \bar{u}^+ (r_s)}{\d p_e} \\
			& \quad
			- \bar{n}^+ (r_s) \bar{T}^+(r_s) \frac{\d \bar{S}^+ (r_s)}{ \d p_e} 
			+ \frac{\bar{u}^- (r_s) (\bar{p}^- (r_s) + \bar{\rho}^- (r_s) c^2)}{c^2 - (\bar{u}^- (r_s))^2} \frac{\d \bar{u}^- (r_s)}{\d p_e}.
	\end{aligned}\end{equation}
	Substituting \eqref{derivative-2} and \eqref{derivation-p} into $\left[ \frac{\bar{p} + \bar{\rho} c^2}{c^2 - \bar{u}^2} \, \bar{u}^2 + \bar{p} \right] (r_s) = 0$ yields
	\begin{equation}\label{derivation-3}
		\left[ \frac{\bar{p} + \bar{\rho} c^2}{c^2 - \bar{u}^2} \bar{u}^2\right] (r_s)  \frac{2 \d r_s}{r_s \d p_e} = - \bar{n}^+ (r_s) \bar{T}^+(r_s) \frac{\d \bar{S}^+ (r_s)}{ \d p_e} .
	\end{equation}
	
	Form \eqref{rel-const}, one also has
	\begin{equation}\label{rel-deriv}
		\begin{cases}
			\frac{\mathrm{d}}{\mathrm{d} p_e} \left( \frac{r_2^2 \bar{n}^+ (r_2) \bar{u}^+ (r_2)}{\sqrt{c^2 - (\bar{u}^+ (r_2))^2}} \right) = 0,\\
			\frac{\d}{\d p_e} \left( \frac{\bar{p}^+ (r_2) + \bar{\rho}^+ (r_2) c^2}{\bar{n}^+(r_2) \sqrt{c^2 - (\bar{u}^+ (r_2))^2} } \right) = 0.
		\end{cases}
	\end{equation}
	A straightforward computation gives 
	\begin{equation}\label{derivation-4}
		1 + c^2 \frac{\d \bar{\rho}^+ (r_2)}{ \d p_e} - \frac{\bar{p}^+ (r_2) + \bar{\rho}^+ (r_2) c^2}{\bar{n}^+ (r_2)} \cdot \frac{c^2 + (\bar{u}^+ (r_2))^2}{c^2} \frac{\d \bar{n}^+ (r_2)}{ \d p_e} = 0.
	\end{equation}
	It follows from \eqref{rho} and \eqref{eq:internal-energy} that 
	\begin{equation*}
		\frac{\d \bar{\rho}^+ (r_2)}{ \d p_e} = \frac{\d \bar{n}^+ (r_2)}{ \d p_e} + \frac{1}{(\gamma - 1)c^2}.
	\end{equation*}
	Combining this with \eqref{rel-sound} and \eqref{derivation-4} gives
	\begin{equation}\label{derivation-5}
		\frac{\gamma}{\gamma - 1} - \frac{c^2 (\bar{c}_s^2(r_2) + (\gamma - 1) (\bar{u}^+ (r_2))^2)}{(\gamma - 1)c^2 - \bar{c}_s^2(r_2)} \frac{\d \bar{n}^+ (r_2)}{\d p_e} = 0,
	\end{equation}
	where $\bar{c}_s^2(r_2) = c_s^2 (\bar{n}^+ (r_2), \bar{S}^+ (r_s))$.

	Substituting \eqref{eq:internal-energy} into \eqref{law}, we obtain
	\begin{equation}\label{law-2}
		T \mathrm{d}S = \frac{1}{\gamma - 1} \left( \frac{\mathrm{d}p}{n} - \frac{p}{n^2} \mathrm{d}n \right) - \frac{p}{n^2} \mathrm{d}n = \frac{1}{\gamma - 1} \frac{\mathrm{d}p}{n} - \frac{\gamma}{\gamma - 1} \frac{p}{n^2} \mathrm{d}n.
	\end{equation}
	Together with \eqref{p}, we have
	\begin{equation*}
		\frac{\d \bar{n}^+ (r_2)}{\d p_e} = \frac{1}{\gamma R \bar{T}^+ (r_2)} - \frac{(\gamma - 1)\bar{n}^+ (r_2)}{\gamma R} \frac{\d \bar{S}^+ (r_s)}{\d p_e}.
	\end{equation*}
	Substituting the above into \eqref{derivation-5} yields
	\begin{equation*}
		\frac{\d \bar{S}^+ (r_s)}{\d p_e} = \frac{(\bar{u}^+ (r_2))^2 - \bar{c}_s^2 (r_2)}{\bar{n}^+ (r_2) \bar{T}^+ (r_2) (\bar{c}_s^2 (r_2) + (\gamma - 1) (\bar{u}^+ (r_2))^2 )} < 0.
	\end{equation*}
	Hence, 
	\begin{equation*}
		\frac{\d r_s}{\d p_e} < 0 .
	\end{equation*}

	\textbf{Step 5.} Recall from \textbf{Step 1} that a unique smooth supersonic solution exists on $[r_1, r_s]$ for $r_s \in [r_1, r_2]$, while \textbf{Steps 2} and \textbf{3} yield the existence of a unique smooth subsonic solution on $[r_s, r_2]$. According to \textbf{Step 4}, the mapping $r_s \mapsto f(r_s) = \bar{p}^+ (r_2)$ defines a strictly monotonically decreasing continuous function on $[r_1, r_2]$. At $r_s = r_1$ and $r_s = r_2$, there exist two different constants $P_2$ and $P_1$, respectively, with $P_1 < P_2$. Consequently, the monotonicity established in \textbf{Step 4} guarantees that for any exit pressure $p_e \in (P_1, P_2)$, there exists a unique corresponding shock position $r_s \in (r_1, r_2)$.
\end{proof}

	{\bf Acknowledgment.} Weng is supported by National Natural Science Foundation of China (Grants No. 12571240, 12221001). 

{\bf Data Availability Statement.} No data, models or code were generated or used during the study.

{\bf Conflict of interest.} The author states that there is no conflict of interests.

\end{document}